\documentclass[12 pt]{article}
\usepackage{stmaryrd}
\usepackage{times}
\usepackage{booktabs}
\usepackage{subfigure}
\usepackage{adjustbox}
\usepackage{rotating}
\usepackage{diagbox}
\usepackage{tabularx}
\usepackage{multirow, makecell}
\usepackage{rotating}
\usepackage{longtable}
\usepackage{supertabular}
\usepackage{pifont}
\usepackage{floatrow}
\floatsetup[table]{capposition=top}
\usepackage{caption}
\usepackage{mathrsfs}
\usepackage[fleqn]{amsmath}
\usepackage{amsfonts,amsthm,amssymb,mathrsfs,bbding}
\usepackage{txfonts}
\usepackage{graphics,multicol}
\usepackage{graphicx}
\usepackage{color}
\usepackage{caption}
\usepackage{indentfirst}
\usepackage{cite}
\usepackage{latexsym,bm}
\usepackage{enumerate}
\pagestyle{myheadings} \markright{} \textwidth 150mm \textheight 235mm \oddsidemargin=1cm
\evensidemargin=\oddsidemargin\topmargin=-1.5cm

\newtheorem{lem}{Lemma}[section]

\newtheorem{thm}{Theorem}[section]
\newtheorem{cor}{Corollary}[section]

\theoremstyle{definition}

\newtheorem{remark}{Remark}[section]

\addtocounter{section}{0}
\begin{document}

\title{Quadratic starlike trees\footnote{Supported by National Natural Science Foundation of China (Grant Nos. 11971274, 12061074, 11671344)}}
\author{
{\small  Yarong Hu$^{1,2}$\;,\;\ \ Qiongxiang Huang$^{1,}$\footnote{Corresponding author.
\newline{\it \hspace*{5mm}Email addresses:}  huangqx@xju.edu.cn (Q. Huang).}}\\[2mm]
\footnotesize $^1$ College of Mathematics and System Science,
Xinjiang University, Urumqi 830046, China\\
\footnotesize $^2$ School of Mathematics and Information Technology, Yuncheng University, Yuncheng 044000, China }
\date{}
\maketitle {\flushleft\large\bf Abstract}
In this paper, we introduce the notion of the quadratic  graph, that is a graph whose eigenvalues are integral or quadratic algebraic integral, and  determine nine infinite families of  quadratic starlike trees, which  are just all the quadratic starlike trees  including   integral starlike trees. Thus the quadratic starlike trees are completely characterized, and  moreover, the display expressions for the
 characteristic polynomials of the quadratic starlike trees are also given.
\begin{flushleft}
\textbf{Keywords:} Quadratic algebraic integer; Starlike tree; Characteristic polynomial
\end{flushleft}
\textbf{AMS subject classifications:} 05C50
\section{Introduction}\label{se-1}
The graphs considered throughout this paper are  simple and connected. Let $G$ be a graph on $n$ vertices with \emph{adjacency matrix} $A(G)=(a_{ij})$, where $a_{ij}=1$ if the vertices $i$ and $j$ are adjacent, and $a_{ij}=0$ otherwise. The \emph{characteristic polynomial} of $G$ is defined  as $f_G(x)=det(x I-A(G))$, and the zeros of $f_G(x)$ are called the \emph{eigenvalues} of $G$, which are all real  since  $A(G)$ is real symmetric matrix, and listed as $\lambda_1(G) \geq \lambda_2(G) \geq \cdots \geq \lambda_n(G)$. The eigenvalues of $G$ together with their multiplicities are called the \emph{spectrum} of $G$, denoted by $Spec(G)$.

A number is said to be an \emph{algebraic integer} if it is a root of a  \emph{monic polynomial} ( i.e.,  a polynomial of integral coefficient with first term coefficient  $1$ ). It is clear that $f_G(x) \in\mathbb{Z}[x]$  is  a monic polynomial and so the eigenvalues of $G$ are all  real algebraic integers. Let $\alpha$ be any algebraic integer. A monic polynomial $p(x)$ is called the \emph{minimal polynomial} of $\alpha$ if $\partial(p(x))$, \emph{the degree of $p(x)$},  is as small as possible such that $p(\alpha)=0$, where $\partial(p(x))$ is also called the \emph{degree} of  $\alpha$. Obviously, any integer $a\in \mathbb{Z}$ is an algebraic integer with the minimal polynomial $p(x)=x-a$. It is well known that the minimal polynomial $p(x)$ of $\alpha$ is irreducible over rational field $\mathbb{Q}$ and is uniquely determined by $\alpha$ itself, and any other root  $\bar{\alpha}$  of $p(x)$ is called \emph{conjugate} with $\alpha$. An  algebraic integer $\alpha$ is \emph{quadratic } if its minimal polynomial $p(x)$ is of degree two.  Since $f_G(x)\in\mathbb{Z}[x]$ can be uniquely decomposed into a product of some irreducible factors, and each  irreducible factor contains some conjugate algebraic integers,  the spectrum of $G$ is a union of some conjugate algebraic integers.

A graph $G$ is called \emph{integral} if all eigenvalues of $G$  are integers, or equivalently each irreducible factor of $f_G(x)$ is of degree one. It is natural to introduce a new notion of  \emph{a quadratic graph}, that is,
a graph $G$ whose eigenvalues are the algebraic integers with degree  no more than two, or equivalently each  irreducible factor of $f_G(x)$ is of degree no more than two. It is clear that an integral graph is quadratic   but not vice versa. Moreover, a graph $G$ is called \emph{proper quadratic} if it is quadratic and there is at least one eigenvalue whose degree is exactly  two. The quadratic  graph $G$ exists, clearly complete bipartite graph $K_{n,m}$ is quadratic, and  in fact, a graph with three distinct eigenvalues is  quadratic.

The  integral graph was introduced in \cite{Harary} by Harary and Schwenk about fifty years ago. Integral graphs are very rare and difficult to find, this reduces researchers to investigate integral graphs within restricted classes of graphs, such as starlike trees, balanced trees,  some special classes of trees, trees with arbitrarily large diameters, graphs with few cycles and some Cayley graphs\cite{Watanabe, Lu1, Brouwer, Hic, Patuzzi,Omidi,Csikvari}. Also integral graphs are extended to various   spectra such as Laplacian spectrum, Signless  Laplacian spectrum \cite{Zhang, Huang}. One can refer to \cite{Lu1} for the summary of the researches of  integral graphs.

Denote by $P_i$  the path containing $i$ vertices. The starlike tree $T=T_{n_1, n_2, \ldots ,n_k}$ is a tree  on $n=n_1+2n_2+\cdots +kn_k+1$ vertices that consists of $n_i$ copies of pendant paths $P_i$, each of them shares a common vertex $u$,   i.e.,
$T-u=\cup_{i=1}^{k}n_iP_i$ where $u$ is the center vertex, and  the degree of $u$ is denoted by $d_{T}(u)$.
In this paper we focus our intention on the quadratic starlike trees. First we study the properties of eigenvalues for the quadratic starlike tree, and then give the expressions  of their characteristic polynomials, finally we completely determine all the quadratic starlike trees.

Our article is organized as follows.  In Section 2 we list and give some Lemmas for later use. In Section 3 we give two forms of characteristic polynomials for the quadratic starlike trees.  In Section 4 we identify all quadratic starlike trees which include all the integral starlike trees, and simultaneously  get their characteristic polynomials.

\section{Preliminary}\label{se-2}

\begin{lem}[Interlacing Theorem\cite{Dragos}]\label{interlace}
Let $G$ be a graph with eigenvalues $\lambda_1 \geq \lambda_2\geq\ldots \geq\lambda_n$ and $H$ be an induced subgraph of the graph $G$ with eigenvalues $\mu_1 \geq \mu_2\geq\ldots \geq\mu_m$. Then $\lambda_i\geq\mu_i\geq\lambda_{n-m+i}$ for $i=1,\ldots,m$.
\end{lem}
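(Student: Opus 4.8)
The plan is to reduce the statement to the classical Cauchy interlacing theorem for real symmetric matrices. The key observation is that since $H$ is an \emph{induced} subgraph of $G$ on some vertex subset $S$ with $|S|=m$, its adjacency matrix $A(H)$ is exactly the principal submatrix of $A(G)$ obtained by retaining the rows and columns indexed by $S$. Thus $\mu_1\geq\cdots\geq\mu_m$ are the eigenvalues of an $m\times m$ principal submatrix of the $n\times n$ real symmetric matrix $A(G)$, and the claimed inequalities $\lambda_i\geq\mu_i\geq\lambda_{n-m+i}$ are precisely the conclusion of Cauchy interlacing.

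First I would invoke the Courant--Fischer variational characterization of eigenvalues: for a real symmetric $n\times n$ matrix $A$ with eigenvalues $\lambda_1\geq\cdots\geq\lambda_n$,
\[
\lambda_i=\max_{\dim W=i}\ \min_{0\neq x\in W}\frac{x^{\top}Ax}{x^{\top}x}=\min_{\dim W=n-i+1}\ \max_{0\neq x\in W}\frac{x^{\top}Ax}{x^{\top}x},
\]
where $W$ ranges over subspaces of $\mathbb{R}^n$. Next I would introduce the isometric embedding $\iota\colon\mathbb{R}^m\hookrightarrow\mathbb{R}^n$ that places the coordinates of $y\in\mathbb{R}^m$ into the positions indexed by $S$ and sets the remaining coordinates to zero. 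This embedding satisfies $\|\iota(y)\|=\|y\|$ and, crucially, $(\iota y)^{\top}A(G)(\iota y)=y^{\top}A(H)\,y$, since the entries of $A(G)$ outside the $S\times S$ block do not contribute to the quadratic form on vectors supported in $S$. Consequently $\iota$ carries $i$-dimensional subspaces of $\mathbb{R}^m$ bijectively onto the $i$-dimensional subspaces of the coordinate subspace $V_S=\iota(\mathbb{R}^m)$, preserving Rayleigh quotients.

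For the upper bound $\mu_i\leq\lambda_i$, I would apply the $\max$--$\min$ form: writing $\mu_i$ as a maximum of minimal Rayleigh quotients over $i$-dimensional subspaces of $\mathbb{R}^m$ and transporting these along $\iota$, one obtains a maximum over the $i$-dimensional subspaces lying inside $V_S$, which is a subfamily of all $i$-dimensional subspaces of $\mathbb{R}^n$; since enlarging the family over which a maximum is taken can only increase it, $\mu_i\leq\lambda_i$. For the lower bound $\mu_i\geq\lambda_{n-m+i}$, I would use the $\min$--$\max$ form, noting that $\lambda_{n-m+i}$ is a minimum over subspaces of dimension $n-(n-m+i)+1=m-i+1$, matching the dimension appearing in the corresponding formula for $\mu_i$; restricting the minimization to subspaces inside $V_S$ can only increase the minimum, yielding $\mu_i\geq\lambda_{n-m+i}$.

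The only delicate point is the dimension bookkeeping in the $\min$--$\max$ step: one must verify that the subspace dimension $m-i+1$ produced by the Courant--Fischer formula for $\mu_i$ coincides with the dimension $n-(n-m+i)+1$ appearing in the formula for $\lambda_{n-m+i}$, so that the two minimizations run over subspaces of the same dimension and the restriction-to-$V_S$ comparison is legitimate. An alternative route avoiding the variational machinery is to prove the single-deletion case $m=n-1$ first---where the interlacing $\lambda_1\geq\mu_1\geq\lambda_2\geq\cdots\geq\mu_{n-1}\geq\lambda_n$ can be read off from the sign changes of the secular function---and then compose $n-m$ such one-step deletions. I expect the variational argument to be cleaner, so I would present that as the main proof.
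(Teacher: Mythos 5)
Your proposal is correct. Note, however, that the paper itself gives no proof of this lemma: it is stated as a known result and attributed to the textbook of Cvetkovi\'{c}, Rowlinson and Simi\'{c}, so there is no in-paper argument to compare against. What you have written is the standard derivation: the observation that $A(H)$ is a principal submatrix of $A(G)$ (valid precisely because $H$ is an \emph{induced} subgraph), followed by Cauchy interlacing via Courant--Fischer. Both directions of your estimate are sound --- the max--min form over $i$-dimensional subspaces restricted to the coordinate subspace $V_S$ gives $\mu_i\leq\lambda_i$, and the min--max form gives $\mu_i\geq\lambda_{n-m+i}$ once you check, as you do, that the subspace dimension $m-i+1$ in the formula for $\mu_i$ agrees with $n-(n-m+i)+1$ in the formula for $\lambda_{n-m+i}$. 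The alternative you sketch (prove the one-row-deletion case and iterate $n-m$ times) also works and is the route some textbooks take; the variational argument is cleaner and is the one you should present.
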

For an eigenvalue $\lambda$ of a graph $G$, let $m(G;\lambda)$ denote  the multiplicity of $\lambda$. It immediately follows the result from Lemma \ref{interlace}.
\begin{cor}\label{multi}
Let $G$ be a graph and $v\in V(G)$. If $\lambda$ is an eigenvalue of $G-v$, then $m(G-v;\lambda)-1\leq m(G;\lambda)\leq m(G-v;\lambda)+1$; if $\lambda$ isn't an eigenvalue of a graph $G-v$, then $m(G;\lambda)=0\  or~ 1$.
\end{cor}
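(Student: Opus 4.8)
The plan is to read everything off a single application of Lemma~\ref{interlace} with $H = G-v$, which is an induced subgraph of $G$ on $m = n-1$ vertices. Writing the eigenvalues of $G$ as $\lambda_1 \geq \cdots \geq \lambda_n$ and those of $G-v$ as $\mu_1 \geq \cdots \geq \mu_{n-1}$, the theorem with $n-m=1$ gives $\lambda_i \geq \mu_i \geq \lambda_{i+1}$ for every $i = 1, \ldots, n-1$, i.e.\ the interleaved chain
\[
\lambda_1 \geq \mu_1 \geq \lambda_2 \geq \mu_2 \geq \cdots \geq \lambda_{n-1} \geq \mu_{n-1} \geq \lambda_n .
\]
Both bounds on $m(G;\lambda)$ are then extracted from this chain by a squeezing argument.

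For the lower bound, suppose $\lambda$ is an eigenvalue of $G-v$ with $p = m(G-v;\lambda) \geq 1$, and let $j$ be the smallest index with $\mu_j = \lambda$, so that $\mu_j = \mu_{j+1} = \cdots = \mu_{j+p-1} = \lambda$. For each $i$ with $j+1 \leq i \leq j+p-1$, the chain supplies $\mu_{i-1} \geq \lambda_i \geq \mu_i$, and since both $\mu_{i-1}$ and $\mu_i$ equal $\lambda$ we are forced to have $\lambda_i = \lambda$. This produces $p-1$ eigenvalues of $G$ equal to $\lambda$, whence $m(G;\lambda) \geq p-1 = m(G-v;\lambda)-1$.

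For the upper bound I would run the symmetric argument on the $\lambda_i$. Writing $q = m(G;\lambda)$ and letting $k$ be the smallest index with $\lambda_k = \lambda$, so $\lambda_k = \cdots = \lambda_{k+q-1} = \lambda$, the chain entry $\lambda_i \geq \mu_i \geq \lambda_{i+1}$ for each $i$ with $k \leq i \leq k+q-2$ squeezes $\mu_i = \lambda$; these indices are legitimate because $k+q-1 \leq n$ forces $k+q-2 \leq n-1$. This yields $q-1$ occurrences of $\lambda$ among the $\mu_i$, so $m(G-v;\lambda) \geq q-1$, that is $m(G;\lambda) \leq m(G-v;\lambda)+1$. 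Combining the two bounds gives the first assertion. Crucially, this upper bound never used the hypothesis that $\lambda$ is an eigenvalue of $G-v$, so it holds unconditionally; substituting $m(G-v;\lambda)=0$ into it immediately gives $m(G;\lambda) \leq 1$, which is exactly the second assertion.

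The whole argument is bookkeeping on the interlacing chain, so the only point demanding care — and the closest thing to an obstacle — is the boundary indices: I must confirm that when $\lambda$ sits at the very top ($\lambda_1$) or bottom ($\lambda_n$) of the spectrum, the squeezing inequalities I invoke still reference valid indices $1 \leq i \leq n-1$. This is harmless, since an extremal eigenvalue simply forces no equality beyond the chain's endpoint, but it is where I would verify the edge cases explicitly before declaring the proof complete.
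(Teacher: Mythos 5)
Your argument is correct and is precisely the interlacing computation the paper has in mind: the paper offers no written proof, merely asserting that the corollary "immediately follows" from Lemma~\ref{interlace}, and your squeezing of the chain $\lambda_1 \geq \mu_1 \geq \lambda_2 \geq \cdots \geq \mu_{n-1} \geq \lambda_n$ is the standard way to extract both bounds. The index bookkeeping, including the edge cases and the observation that the upper bound holds unconditionally (giving the second assertion), is all in order.
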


If $G$ is restrict to a tree, there is a stronger result for Corollary \ref{multi} that is stated in the following lemma.
\begin{lem}[\cite{Godsil}]\label{interlace-path}
Let $T$ be a tree and $\lambda$ an eigenvalue of $T$ with multiplicity $m\geq 2$. Let $P$ be a path in $T$. Then $\lambda$ is an eigenvalue of $T-P$ with multiplicity at least $m-1$.
\end{lem}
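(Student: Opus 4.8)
The plan is to prove the statement by induction on the number $k=|V(P)|$ of vertices of the path $P$, carrying the induction inside the wider class of forests (a path always lies in a single component, and the remaining components only contribute extra copies of $\lambda$, so the forest version follows at once from the tree version and, conversely, feeds the induction). Writing $E_\lambda(H)$ for the $\lambda$-eigenspace of a graph $H$, the two elementary operations I would use throughout are \emph{restriction} of an eigenvector to an induced subgraph and \emph{extension by zero}: if $x\in E_\lambda(T)$ vanishes on $V(P)$, then the restriction of $x$ to $T-P$ lies in $E_\lambda(T-P)$, and conversely a vector in $E_\lambda(T-P)$ extends, by placing zeros on $P$, to a vector in $E_\lambda(T)$ exactly when certain local neighbour-sums vanish. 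The base case $k=1$ is nothing but Corollary \ref{multi}, which gives $m(T-P;\lambda)=m(T-v;\lambda)\ge m-1$.

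For the inductive step let $P=v_1v_2\cdots v_k$ with $k\ge 2$, delete the endpoint $v_1$, and set $Q=v_2\cdots v_k$, a path on $k-1$ vertices in the forest $T-v_1$; note that $T-P=(T-v_1)-Q$. Corollary \ref{multi} forces the dichotomy $m(T-v_1;\lambda)\ge m$ or $m(T-v_1;\lambda)=m-1$. In the first case I simply apply the induction hypothesis to the forest $T-v_1$ together with the shorter path $Q$, obtaining $m(T-P;\lambda)\ge m(T-v_1;\lambda)-1\ge m-1$, which already yields the conclusion.

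All the difficulty is therefore concentrated in the case $m(T-v_1;\lambda)=m-1$, where a second blind use of the induction hypothesis would lose one more unit and only give $m-2$. To recover the missing unit I would argue directly with eigenvectors: a vector $x\in E_\lambda(T-v_1)$ extends to $E_\lambda(T)$ precisely when $\sum_{u\sim v_1}x_u=0$, and the equality $m(T-v_1;\lambda)=m-1$ means exactly that this linear condition holds identically on $E_\lambda(T-v_1)$ while some eigenvector of $T$ is nonzero at $v_1$. Writing $T_0$ for the component of $T-v_1$ containing $v_2$, the problem then reduces to the \emph{lossless-deletion} claim $m(T_0-Q;\lambda)\ge m(T_0;\lambda)$, for combined with the untouched components this returns $m(T-P;\lambda)\ge m(T-v_1;\lambda)=m-1$.

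This lossless-deletion claim is the main obstacle, and it is exactly where the tree hypothesis is indispensable, since mere interlacing (Lemma \ref{interlace}) only ever yields the far weaker bound $m-k$. I expect to establish it through the Parter--Wiener analysis of eigenvalue multiplicities in trees, tracking how the deletion of each successive path vertex acts on $E_\lambda$ via the vanishing-sum condition above. One must be careful here: it is \emph{not} true in general that the vertices at which the $\lambda$-multiplicity drops form an independent set (small spiders already exhibit two adjacent such vertices for a common eigenvalue), so the argument cannot simply assert that $v_2$ is non-reducing in $T$. Instead it must show that, once $v_1$ is reducing in $T$, the vertex $v_2$ is non-reducing \emph{in the smaller tree} $T_0$, after which the easy case applied to $(T_0,Q)$ closes the induction. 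Everything outside this structural input is routine bookkeeping with Lemma \ref{interlace} and Corollary \ref{multi}.
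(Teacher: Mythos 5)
First, a remark on the comparison you were asked to survive: the paper gives no proof of this lemma at all --- it is quoted directly from Godsil's \emph{Spectra of trees} --- so your attempt is really being measured against the standard argument in the literature rather than anything in the text.

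Your setup is fine through the case split, and the easy case $m(T-v_1;\lambda)\ge m$ is handled correctly, but there is a genuine off-by-one gap in the hard case $m(T-v_1;\lambda)=m-1$. You correctly isolate the lossless-deletion claim $m(T_0-Q;\lambda)\ge m(T_0;\lambda)$ as the crux, but the mechanism you propose cannot deliver it: even granting that $v_2$ is non-reducing in $T_0$, i.e.\ $m(T_0-v_2;\lambda)\ge m(T_0;\lambda)$, the ``easy case'' applied to $(T_0,Q)$ only returns $m(T_0-Q;\lambda)\ge m(T_0-v_2;\lambda)-1\ge m(T_0;\lambda)-1$ --- a loss of one unit, not zero --- and feeding this back through the components of $T-v_1$ yields $m(T-P;\lambda)\ge m-2$, exactly the bound you set out to beat. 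Concretely, take $T=T_{0,3}$ (three legs $c\,a_i\,b_i$ of length $2$), $\lambda=1$, $m=2$, and $P=b_1a_1ca_2b_2$: here $v_1=b_1$ is a downer, $m(T_0;1)=1$, $v_2=a_1$ is indeed non-reducing in $T_0$ since $m(T_0-a_1;1)=m(P_5;1)=1$, yet your chain of inequalities bottoms out at $m(T_0-Q;1)\ge 0$ while the true value is $1$. To close the induction you would need the genuinely stronger statement that once the first vertex of the path is a downer, the \emph{entire} remaining path can be removed with no further loss; that is essentially the lemma itself in disguise, and the proposal offers only the hope that a Parter--Wiener analysis will supply it. The standard short proof avoids the induction entirely: for the endpoints $u=v_1$, $v=v_k$, Jacobi's determinant identity specializes on a tree to $f_{T-u}(x)f_{T-v}(x)-f_T(x)f_{T-u-v}(x)=f_{T-P}(x)^2$, because $P$ is the unique $u$--$v$ path; interlacing (Corollary \ref{multi}) makes the left-hand side divisible by $(x-\lambda)^{2m-2}$, whence $(x-\lambda)^{m-1}\mid f_{T-P}(x)$.
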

For eigenvalue $0$ of a graph, we have the following result.
\begin{lem}[\cite{Dragos}]\label{0-mul}
Let $G$ be a graph with a pendant edge $uv$. Then $0$ has the same multiplicity as an eigenvalue of $G$ and $G-u-v$.
\end{lem}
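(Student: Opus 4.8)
The plan is to read off the multiplicity of $0$ as the nullity of the adjacency matrix, since the eigenspace belonging to the eigenvalue $0$ is exactly the kernel: $m(G;0)=\dim\ker A(G)=|V(G)|-\operatorname{rank}A(G)$. The strategy is then to exhibit an explicit linear isomorphism between $\ker A(G)$ and $\ker A(G-u-v)$, from which the claimed equality of multiplicities follows at once. Let $v$ be the pendant vertex and $u$ its unique neighbour, write $H:=G-u-v$, and order the vertices as $v,u$, then the vertices of $H$, so that
\[
A(G)=\begin{pmatrix} 0 & 1 & \mathbf{0}^{T}\\ 1 & 0 & \mathbf{b}^{T}\\ \mathbf{0} & \mathbf{b} & A(H)\end{pmatrix},
\]
where $\mathbf{b}$ is the $0/1$ vector recording the neighbours of $u$ in $H$, and the zero blocks in the first row and column encode the fact that $v$ has degree one (its only neighbour is $u$).

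Next I would solve the homogeneous system $A(G)\mathbf{z}=\mathbf{0}$ blockwise for $\mathbf{z}=(z_v,z_u,\mathbf{w})^{T}$. The row attached to $v$ forces $z_u=0$; the rows attached to $H$ then reduce, after substituting $z_u=0$, to $A(H)\mathbf{w}=\mathbf{0}$, so $\mathbf{w}\in\ker A(H)$; and finally the row attached to $u$ gives $z_v=-\mathbf{b}^{T}\mathbf{w}$. Thus every null vector of $A(G)$ is completely determined by its $H$-part $\mathbf{w}$, and conversely every $\mathbf{w}\in\ker A(H)$ lifts to the null vector $(-\mathbf{b}^{T}\mathbf{w},\,0,\,\mathbf{w})^{T}$ of $A(G)$. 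The projection $\mathbf{z}\mapsto\mathbf{w}$ is therefore a linear isomorphism $\ker A(G)\to\ker A(H)$, whence $m(G;0)=\dim\ker A(G)=\dim\ker A(H)=m(G-u-v;0)$, as required.

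An alternative route is to use the pendant-vertex recurrence $f_G(x)=x\,f_{G-v}(x)-f_{G-u-v}(x)$ and track the exact power of $x$ dividing each factor. The main obstacle on that route is the possibility of cancellation: when the $x$-adic valuations of $x\,f_{G-v}(x)$ and $f_{G-u-v}(x)$ coincide, one cannot directly conclude the valuation of their difference, and one would have to bring in the interlacing bounds of Corollary~\ref{multi} (or Lemma~\ref{interlace-path}) to control how the multiplicity of $0$ can change under vertex deletion. I expect this cancellation bookkeeping to be the only genuinely delicate point, and it is precisely what the kernel argument above sidesteps, since the vanishing of the $u$-coordinate cleanly decouples the pendant edge from $H$ and makes the isomorphism transparent; for that reason I would present the linear-algebra proof as the primary argument.
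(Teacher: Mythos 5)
Your kernel argument is correct and complete. The block form of $A(G)$ is right, the row attached to the pendant vertex $v$ forces $z_u=0$, which decouples the remaining equations into $A(H)\mathbf{w}=\mathbf{0}$ together with the fully determined coordinate $z_v=-\mathbf{b}^{T}\mathbf{w}$; since $A(G)$ is real symmetric, the multiplicity of the eigenvalue $0$ equals the nullity, so the exhibited isomorphism $\ker A(G)\cong\ker A(H)$ yields $m(G;0)=m(G-u-v;0)$. Note that the paper itself gives no proof of this lemma --- it is quoted directly from \cite{Dragos} --- and your argument is essentially the standard one; your side remark correctly identifies why the alternative route via the recurrence $f_G(x)=x\,f_{G-v}(x)-f_{G-u-v}(x)$ is more delicate (possible cancellation of the $x$-adic valuations), which is exactly the issue the kernel computation avoids.
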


Using Lemma \ref{0-mul}, we can give the exact value of $m(T;0)$  for   a starlike tree $T$.
\begin{cor}\label{starlike-eigen-0}
Let $T$ be a starlike tree with the center vertex $u$, and $k$ be the number of paths of even length in $T-u$.  We have $m(T;0)=k-1$ if $k\geq1$, and $m(T;0)=1$ otherwise.
\end{cor}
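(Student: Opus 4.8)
The plan is to reduce $T$, without changing the multiplicity of the eigenvalue $0$, to a star whose order is governed by $k$, and then to read off the nullity of that star. The engine of the reduction is Lemma~\ref{0-mul}: whenever the current graph has a pendant edge $xy$ (with $x$ the degree-one endpoint), deleting both $x$ and $y$ leaves $m(\,\cdot\,;0)$ unchanged. I would apply this arm by arm. Fix one pendant path, say a copy of $P_i$ written as $v_1 v_2\cdots v_i$ with $v_1$ adjacent to the center $u$. Its far endpoint $v_i$ is a pendant vertex, so Lemma~\ref{0-mul} lets me delete $v_{i-1}$ and $v_i$, shortening this arm to $P_{i-2}$ while preserving $m(T;0)$. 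Iterating on the same arm peels it off two vertices at a time.

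The outcome of peeling depends only on the parity of $i$. Since $P_i$ has $i-1$ edges, an arm of \emph{even length} is exactly one with $i$ odd (equivalently, $P_i$ is the path whose spectrum contains $0$); peeling such an arm terminates at $P_1$, i.e. a single vertex still attached to $u$. An arm of odd length has $i$ even and peels away completely, leaving nothing but the center. Performing this on every arm, and noting that the center $u$ is never deleted as long as one vertex survives, reduces $T$ to the star $K_{1,k}$ when $k\ge 1$, and to the single vertex $K_1$ when $k=0$; in either case the reduced graph has the same value of $m(\,\cdot\,;0)$ as $T$.

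Finally I compute the nullity of the reduced graph directly. The star $K_{1,k}$ has spectrum $\{\pm\sqrt{k}\}\cup\{0^{(k-1)}\}$, so $m(K_{1,k};0)=k-1$ (which already gives the correct answer $0$ in the smallest case $K_{1,1}=P_2$), while $m(K_1;0)=1$. Substituting back yields $m(T;0)=k-1$ for $k\ge 1$ and $m(T;0)=1$ for $k=0$, as claimed.

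I do not expect a genuine obstacle here; the argument is a short parity-controlled induction, and the only points requiring care are purely bookkeeping: checking that the center vertex indeed persists throughout the peeling (so that even-length arms really do contribute a surviving pendant vertex), handling the two base cases $k=0$ and $k\ge 1$ separately, and translating ``even length'' into ``$i$ odd'' consistently. As an independent cross-check one can instead invoke the known identity $m(T;0)=n-2\mu(T)$ for forests: matching the center to one even-length arm and taking perfect (respectively near-perfect) matchings on the remaining arms leaves exactly $k-1$ vertices uncovered when $k\ge 1$, and exactly one vertex uncovered when $k=0$, recovering the same count.
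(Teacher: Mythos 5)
Your argument is correct and is exactly the route the paper intends: the corollary is presented as a consequence of Lemma~\ref{0-mul}, and your arm-by-arm peeling (two vertices at a time, preserving the nullity at each step, terminating at $K_{1,k}$ or $K_1$) together with the translation ``even length $\Leftrightarrow$ $P_i$ with $i$ odd'' is the standard way to carry that out. The bookkeeping you flag (the center survives, the two cases $k=0$ and $k\ge 1$) is handled correctly, and the matching-number cross-check is consistent.
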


\begin{lem}[\cite{Dragos}]\label{lem-path-cycle-polynomial}
For the path $P_n$ and cycle $C_n$, we have  $Spec(P_n)=\{ \lambda_j=2cos\frac{\pi j}{n+1}\mid j=1,\ldots ,n\}$ and  $Spec(C_n)=\{ \lambda_j=2cos\frac{2\pi j}{n}\mid j=1,\ldots ,n \}$.
\end{lem}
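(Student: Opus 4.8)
The plan is to treat both spectra through the eigenvalue equation $A\mathbf{v}=\lambda\mathbf{v}$ written out coordinatewise. Because the adjacency matrices of $P_n$ and $C_n$ are respectively tridiagonal and circulant (with zero diagonal and unit entries on the adjacent off-diagonals), each eigenvalue equation reduces to the same second-order linear recurrence $v_{i+1}-\lambda v_i+v_{i-1}=0$; the two graphs differ only in the boundary conditions imposed on its solutions. I would therefore solve the recurrence once via the substitution $\lambda=2\cos\theta$, whose characteristic roots are $e^{\pm i\theta}$, and then read off the admissible $\theta$ from the boundary data in each case.

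For $P_n$, label the vertices $1,\ldots,n$ along the path. An eigenvector $(v_1,\ldots,v_n)^{\top}$ for $\lambda$ satisfies $v_{i-1}+v_{i+1}=\lambda v_i$ for $2\le i\le n-1$, together with the endpoint equations $v_2=\lambda v_1$ and $v_{n-1}=\lambda v_n$. Introducing the ghost variables $v_0=v_{n+1}=0$ collapses all $n$ equations into the single recurrence $v_{i+1}=\lambda v_i-v_{i-1}$. Writing $\lambda=2\cos\theta$ and using $v_0=0$, the solutions are proportional to $v_i=\sin(i\theta)$; the remaining condition $v_{n+1}=0$ then forces $\sin((n+1)\theta)=0$, i.e.\ $(n+1)\theta=j\pi$. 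The choices $j=1,\ldots,n$ give nonzero eigenvectors and produce $\lambda_j=2\cos\frac{j\pi}{n+1}$, and since $\cos$ is strictly decreasing on $(0,\pi)$ these $n$ numbers are pairwise distinct, so they exhaust $Spec(P_n)$.

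For $C_n$, label the vertices cyclically so that $A(C_n)$ is the circulant matrix of the word $(0,1,0,\ldots,0,1)$, and the recurrence $v_{i+1}=\lambda v_i-v_{i-1}$ now holds for all $i$ read modulo $n$, i.e.\ under the periodic condition $v_{i+n}=v_i$. Rather than resolve the recurrence by hand, I would invoke the standard diagonalization of circulant matrices: the vectors $\mathbf{v}^{(j)}=(1,\omega^{j},\omega^{2j},\ldots,\omega^{(n-1)j})^{\top}$ with $\omega=e^{2\pi i/n}$ form a complete eigenbasis, and the eigenvalue attached to $\mathbf{v}^{(j)}$ is $\omega^{j}+\omega^{-j}=2\cos\frac{2\pi j}{n}$ for $j=0,1,\ldots,n-1$, which is exactly the asserted list after reindexing by $j=1,\ldots,n$.

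The computation is routine, and the only places demanding care are the boundary bookkeeping. For $P_n$ one must verify that $j=0$ and $j=n+1$ yield the zero vector (so are rightly discarded) and that the $n$ surviving eigenvalues are distinct, so a dimension count confirms we have found the entire spectrum. For $C_n$ one must observe the coincidences $\lambda_j=\lambda_{n-j}$, which make most eigenvalues appear with multiplicity two (the exceptions being $j=0$, and $j=n/2$ when $n$ is even); the circulant eigenvectors remain linearly independent regardless, so the multiplicities are recorded correctly and a spectrum of total size $n$ is obtained.
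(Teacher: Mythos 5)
Your proof is correct. The paper itself gives no argument for this lemma---it is quoted directly from the cited textbook of Cvetkovi\'{c}, Rowlinson and Simi\'{c}---and your derivation (the three-term recurrence $v_{i+1}=\lambda v_i-v_{i-1}$ with ghost boundary conditions $v_0=v_{n+1}=0$ yielding the sine eigenvectors for $P_n$, and circulant diagonalization for $C_n$) is exactly the standard argument found in such references, with the two points needing care (the dimension count via the $n$ distinct values $2\cos\frac{j\pi}{n+1}$, and the multiplicity-two coincidences $\lambda_j=\lambda_{n-j}$ for the cycle) handled properly.
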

From Lemma \ref{lem-path-cycle-polynomial}, we see that the largest eigenvalues $\lambda_1(P_n)=2cos\frac{\pi }{n+1}$,  $\lambda_1(C_n)=2$ and the second largest eigenvalues $\lambda_2(P_n)=2cos\frac{2\pi }{n+1}$, $\lambda_2(C_n)=2cos\frac{2\pi }{n}$.
\begin{lem}[\cite{Mirko}]\label{starlike-poly}
The characteristic polynomial of a starlike tree $T=T_{n_1, n_2, \ldots ,n_k}$ is
$f_T(x)=x\prod_{i=1}^kf^{n_i}_{P_i}(x)-\sum_{i=1}^k[n_if_{P_{i-1}}(x)f^{n_i-1}_{P_i}(x)\prod_{j\not=i}f^{n_j}_{P_j}(x)]$,
where  $f_{P_0}(x)=1$.
\end{lem}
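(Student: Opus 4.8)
The plan is to apply the classical vertex-deletion (Schwenk) formula for characteristic polynomials at the center vertex $u$, exploiting the fact that a starlike tree contains no cycles. Recall that for any graph $G$ and any vertex $v$ one has
\[
f_G(x) = x\, f_{G-v}(x) - \sum_{w \sim v} f_{G-v-w}(x) - 2 \sum_{Z} f_{G-V(Z)}(x),
\]
where the final sum ranges over all cycles $Z$ passing through $v$. Since $T$ is a tree, no such cycle exists, the third term vanishes, and the formula collapses to $f_T(x) = x\,f_{T-u}(x) - \sum_{w \sim u} f_{T-u-w}(x)$. This reduces the entire computation to understanding the two graphs $T-u$ and $T-u-w$.

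First I would compute $f_{T-u}(x)$. By the definition of a starlike tree, $T-u = \bigcup_{i=1}^{k} n_i P_i$ is a disjoint union of paths, and the characteristic polynomial of a disjoint union is the product of the characteristic polynomials of its components. Hence $f_{T-u}(x) = \prod_{i=1}^{k} f_{P_i}^{n_i}(x)$, which is exactly the leading term of the claimed identity.

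Next I would analyze the neighbors of $u$. Each of the $n_i$ copies of $P_i$ is joined to $u$ through exactly one of its endpoints, so $u$ has precisely $n_i$ neighbors lying on copies of $P_i$, with $d_T(u) = \sum_i n_i$ in total. Fix such a neighbor $w$ on a copy of $P_i$. Deleting $w$ removes an endpoint of that path and leaves $P_{i-1}$, while the remaining $n_i-1$ copies of $P_i$ and every copy of $P_j$ with $j \neq i$ stay intact. Applying the disjoint-union product again gives $f_{T-u-w}(x) = f_{P_{i-1}}(x)\, f_{P_i}^{n_i-1}(x) \prod_{j \neq i} f_{P_j}^{n_j}(x)$, where the convention $f_{P_0}(x)=1$ covers the case $i=1$ (then $w$ is a pendant vertex whose deletion leaves the empty graph). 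Because all $n_i$ neighbors sitting on copies of $P_i$ yield the same polynomial, summing over $w \sim u$ collects into $\sum_{i=1}^{k} n_i\, f_{P_{i-1}}(x)\, f_{P_i}^{n_i-1}(x) \prod_{j \neq i} f_{P_j}^{n_j}(x)$, which is precisely the subtracted term. Substituting both computations into the reduced formula yields the asserted expression.

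Once the Schwenk reduction is in hand the argument is essentially bookkeeping, so I do not anticipate a genuine obstacle; the only point demanding care is the combinatorial accounting of the neighbors, namely recognizing that each index $i$ contributes $n_i$ identical terms (the factor $n_i$), that one copy of $P_i$ is degraded to $P_{i-1}$ while $n_i-1$ copies survive (the factor $f_{P_i}^{n_i-1}$), and that the boundary convention $f_{P_0}(x)=1$ is exactly what makes the $i=1$ term uniform with all the others.
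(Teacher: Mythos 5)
Your proposal is correct. The paper does not prove this lemma at all --- it is quoted from Lepovi\'{c} and Gutman \cite{Mirko} as a known result --- so there is no internal proof to compare against. Your derivation via Schwenk's vertex-deletion formula at the center vertex $u$ (with the cycle term vanishing because $T$ is a tree), together with the multiplicativity of the characteristic polynomial over the components of $T-u$ and the correct count of $n_i$ neighbours of $u$ per path length $i$, is the standard and complete argument; the convention $f_{P_0}(x)=1$ is handled properly for the $i=1$ case.
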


\begin{lem}[\cite{Dragos}]\label{diam}
If $G$ is a connected graph with precisely $m$ distinct eigenvalues then the diameter $d(G)\leq m-1$.
\end{lem}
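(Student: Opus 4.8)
The plan is to relate the number of distinct eigenvalues to the degree of the minimal polynomial of the adjacency matrix, and then to connect that degree to the diameter through the combinatorial meaning of powers of $A(G)$. Write $A=A(G)$ and $d=d(G)$. Since $A$ is a real symmetric matrix it is diagonalizable, so its minimal polynomial is exactly $\prod_{i=1}^{m}(x-\mu_i)$, where $\mu_1,\ldots,\mu_m$ are the $m$ distinct eigenvalues; hence the minimal polynomial has degree precisely $m$. Equivalently, $I,A,A^2,\ldots,A^{m-1}$ are linearly independent while $A^m$ lies in their span, and consequently every power $A^p$ with $p\ge m$ is a linear combination of $I,A,\ldots,A^{m-1}$.

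First I would record the elementary combinatorial fact that for any two vertices $u,v$ the $(u,v)$ entry $(A^k)_{uv}$ counts the walks of length $k$ from $u$ to $v$. If $u$ and $v$ are at distance $k$, then no walk shorter than $k$ can join them, so $(A^j)_{uv}=0$ for all $0\le j<k$, whereas a shortest $u$--$v$ path is itself a walk of length $k$, giving $(A^k)_{uv}>0$.

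Next I would argue by contradiction. Suppose $m\le d$, and choose vertices $u,v$ with $\mathrm{dist}(u,v)=d$, which exist because $G$ has diameter $d$. By the first paragraph, $A^d=\sum_{j=0}^{m-1}c_jA^j$ for suitable scalars $c_j$. Comparing the $(u,v)$ entries of both sides and invoking the walk-counting fact, the left-hand side satisfies $(A^d)_{uv}>0$, while the right-hand side equals $\sum_{j=0}^{m-1}c_j(A^j)_{uv}=0$, since every index $j\le m-1\le d-1$ is strictly smaller than $d=\mathrm{dist}(u,v)$. This contradiction forces $m\ge d+1$, that is, $d\le m-1$.

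The only mild obstacle is pinning down that the minimal polynomial has degree exactly $m$; this is precisely where the symmetry (hence diagonalizability) of $A$ is essential, since for a general matrix the number of distinct eigenvalues need not equal the degree of the minimal polynomial. Everything else reduces to the walk-counting identity for powers of the adjacency matrix and the fact that a power of a matrix which annihilates its minimal polynomial collapses into the span of the lower powers.
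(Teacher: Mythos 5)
Your proof is correct. The paper itself gives no argument for this lemma---it is quoted directly from the cited reference \cite{Dragos}---and your proof (degree of the minimal polynomial equals the number of distinct eigenvalues for the diagonalizable symmetric matrix $A$, combined with the walk-counting interpretation of the entries of $A^k$ to force $I,A,\ldots,A^{d}$ to be linearly independent) is exactly the standard argument found there, so nothing further is needed.
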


\begin{lem}[\cite{Lang}]\label{lem-Euler}
Let $n$ be an integer $>1$, and let $n=\prod_i p_i^{r_i}$ be a factorization of $n$ into primes, with exponents $r_i\geq1$. Then the Euler function
$\phi(n)=n\prod_i(1-\frac{1}{p_i})=\prod_i p_i^{r_i-1}~\prod_i(p_i-1)$.
\end{lem}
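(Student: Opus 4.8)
The plan is to prove both claimed expressions for $\phi(n)$ in two stages: first evaluate $\phi$ on a single prime power, then show $\phi$ is multiplicative on coprime arguments and assemble the general factorization. Recall that $\phi(n)$ counts the integers $a$ with $1\le a\le n$ and $\gcd(a,n)=1$.

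First I would handle a single prime power $p^{r}$. Among $1,2,\ldots,p^{r}$ the integers failing to be coprime to $p^{r}$ are precisely the multiples of $p$, namely $p,2p,\ldots,p^{r-1}p$, of which there are exactly $p^{r-1}$. Hence $\phi(p^{r})=p^{r}-p^{r-1}=p^{r-1}(p-1)=p^{r}\bigl(1-\tfrac{1}{p}\bigr)$, which is the single-prime case of both claimed forms. Next I would establish multiplicativity: if $\gcd(m,n)=1$ then $\phi(mn)=\phi(m)\phi(n)$. The tool is the Chinese Remainder Theorem, which yields a ring isomorphism $\mathbb{Z}/mn\mathbb{Z}\cong\mathbb{Z}/m\mathbb{Z}\times\mathbb{Z}/n\mathbb{Z}$. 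Under this isomorphism a residue class is a unit (equivalently, a representative coprime to $mn$) if and only if both of its components are units; since the number of units of $\mathbb{Z}/k\mathbb{Z}$ equals $\phi(k)$, counting units on both sides gives $\phi(mn)=\phi(m)\phi(n)$.

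Finally, writing $n=\prod_i p_i^{r_i}$ with the factors $p_i^{r_i}$ pairwise coprime, repeated application of multiplicativity gives $\phi(n)=\prod_i\phi(p_i^{r_i})$, and substituting the prime-power value from the first step yields
\[
\phi(n)=\prod_i p_i^{r_i}\Bigl(1-\tfrac{1}{p_i}\Bigr)=\Bigl(\prod_i p_i^{r_i}\Bigr)\prod_i\Bigl(1-\tfrac{1}{p_i}\Bigr)=n\prod_i\Bigl(1-\tfrac{1}{p_i}\Bigr),
\]
while regrouping the prime-power factors as $p_i^{r_i-1}(p_i-1)$ gives $\prod_i p_i^{r_i-1}\prod_i(p_i-1)$, establishing both stated expressions simultaneously.

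Since this is a classical fact, there is no genuine difficulty; the only real content lies in the multiplicativity step, and I expect the bookkeeping in the unit-correspondence under the Chinese Remainder Theorem to be the main, though entirely routine, point to verify. An equivalent route that bypasses multiplicativity altogether is a direct inclusion–exclusion over the events ``$a$ is divisible by $p_i$'': if $A_i=\{a:1\le a\le n,\ p_i\mid a\}$ then $|A_i|=n/p_i$, $|A_i\cap A_j|=n/(p_ip_j)$, and so on, whence $\phi(n)=\sum_{S}(-1)^{|S|}\,n\prod_{i\in S}\tfrac{1}{p_i}=n\prod_i\bigl(1-\tfrac{1}{p_i}\bigr)$ in a single stroke.
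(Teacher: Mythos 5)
Your proof is correct and entirely standard: the prime-power computation, multiplicativity via the Chinese Remainder Theorem, and the final assembly (or the inclusion--exclusion shortcut) all check out. The paper itself offers no proof of this lemma --- it is quoted as a known result with a citation to Lang's \emph{Algebra} --- so there is nothing to compare against; your argument is exactly the textbook one that the citation points to.
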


Let $\zeta_n=$exp($2\pi i/n$) denote a primitive $n$th root of unity, $\mathbb{Q}(\zeta_n+\zeta_n^{-1})$ denote the extension field  generated by $\zeta_n+\zeta_n^{-1}=2cos\frac{2\pi }{n}$ over rational field $\mathbb{Q}$,  and $[\mathbb{Q}(\zeta_n+\zeta_n^{-1}):\mathbb{Q}]$ denote the extension degree of the field $\mathbb{Q}(\zeta_n+\zeta_n^{-1})$ over rational field $\mathbb{Q}$.

\begin{lem}[\cite{Washington}]\label{lem-1}
For $n>2$, $[\mathbb{Q}(\zeta_n+\zeta_n^{-1}):\mathbb{Q}]=\frac{1}{2}\phi(n)$, where $\phi(n)$ is Euler function.
\end{lem}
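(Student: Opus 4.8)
The plan is to exploit the tower $\mathbb{Q} \subseteq \mathbb{Q}(\zeta_n + \zeta_n^{-1}) \subseteq \mathbb{Q}(\zeta_n)$ together with the multiplicativity of extension degrees. First I would invoke the classical fact that the $n$th cyclotomic polynomial $\Phi_n(x)$ is irreducible over $\mathbb{Q}$ and has degree $\phi(n)$; since $\zeta_n$ is a root of $\Phi_n(x)$, this gives $[\mathbb{Q}(\zeta_n):\mathbb{Q}] = \phi(n)$. Because $\zeta_n + \zeta_n^{-1}$ is a polynomial expression in $\zeta_n$, the field $\mathbb{Q}(\zeta_n + \zeta_n^{-1})$ is an intermediate field of this extension, so the tower is well defined.

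The heart of the argument is to show that $[\mathbb{Q}(\zeta_n):\mathbb{Q}(\zeta_n+\zeta_n^{-1})] = 2$. For the upper bound, observe that $\zeta_n$ satisfies the monic quadratic
\[
t^{2} - (\zeta_n + \zeta_n^{-1})\,t + 1 = 0
\]
whose coefficients lie in $\mathbb{Q}(\zeta_n + \zeta_n^{-1})$, so this relative degree is at most $2$. For the matching lower bound I would use that $\zeta_n + \zeta_n^{-1} = 2\cos\frac{2\pi}{n}$ is real, whence $\mathbb{Q}(\zeta_n + \zeta_n^{-1}) \subseteq \mathbb{R}$, whereas for $n > 2$ the root of unity $\zeta_n = \cos\frac{2\pi}{n} + i\sin\frac{2\pi}{n}$ has $\sin\frac{2\pi}{n} \neq 0$ and is therefore not real. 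Hence $\zeta_n \notin \mathbb{Q}(\zeta_n + \zeta_n^{-1})$, which forces the relative degree to be exactly $2$ rather than $1$.

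Finally I would combine these by the tower law
\[
[\mathbb{Q}(\zeta_n):\mathbb{Q}] = [\mathbb{Q}(\zeta_n):\mathbb{Q}(\zeta_n+\zeta_n^{-1})]\cdot[\mathbb{Q}(\zeta_n+\zeta_n^{-1}):\mathbb{Q}],
\]
which reads $\phi(n) = 2\cdot[\mathbb{Q}(\zeta_n+\zeta_n^{-1}):\mathbb{Q}]$ and yields the claimed value $\frac{1}{2}\phi(n)$. The one genuine obstacle is the lower-bound step: everything hinges on using the hypothesis $n > 2$ to guarantee that $\zeta_n$ is non-real, so that the quadratic relation does not degenerate into a linear one. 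The irreducibility of $\Phi_n(x)$, though a deeper result, I would simply cite as standard; with it in hand, no explicit computation of the minimal polynomial of $\zeta_n + \zeta_n^{-1}$ is required once the relative degree has been pinned down.
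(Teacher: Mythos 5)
Your proof is correct and complete. The paper gives no proof of its own for this lemma---it is quoted directly from the cited reference [Washington]---and your argument (the tower $\mathbb{Q}\subseteq\mathbb{Q}(\zeta_n+\zeta_n^{-1})\subseteq\mathbb{Q}(\zeta_n)$, the quadratic relation $t^2-(\zeta_n+\zeta_n^{-1})t+1=0$ giving relative degree at most $2$, and the non-reality of $\zeta_n$ for $n>2$ versus the reality of $\mathbb{Q}(\zeta_n+\zeta_n^{-1})$ giving the matching lower bound) is precisely the standard proof found there, so nothing further is needed.
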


From Lemma \ref{lem-1}, we see that  the algebraic degrees of $\lambda_2(P_n)$ and $ \lambda_2(C_n)$ are  $\frac{1}{2}\phi(n+1)$ and $\frac{1}{2}\phi(n)$, respectively, which can be used to identify the quadratic  pathes and cycles.

\begin{thm}\label{thm-path-cyc-1}
A path $P_n$ is quadratic  if and only if  $n\le 5$. A cycle $C_n$ is quadratic  if and only if $n\in \{3, 4,5, 6, 8, 10, 12\}$.
\end{thm}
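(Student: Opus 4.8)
The plan is to convert the quadratic condition into a purely number-theoretic statement about the Euler function, using the explicit spectra in Lemma~\ref{lem-path-cycle-polynomial} together with the degree formula in Lemma~\ref{lem-1}. The starting observation is that every eigenvalue of $P_n$ or $C_n$ has the form $2\cos\frac{2\pi j}{N}=\zeta_N^{\,j}+\zeta_N^{-j}$, where $\zeta_N^{\,j}$ is a primitive $t$-th root of unity with $t=N/\gcd(j,N)$; hence $2\cos\frac{2\pi j}{N}$ is a conjugate of $\zeta_t+\zeta_t^{-1}$ and, by Lemma~\ref{lem-1}, has algebraic degree $\frac12\phi(t)$ when $t>2$ (and degree $1$ when $t\le 2$). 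Thus $G\in\{P_n,C_n\}$ is quadratic if and only if every relevant $t$ satisfies $\phi(t)\le 4$.

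First I would record which integers have small totient: using Lemma~\ref{lem-Euler} one checks that $\phi(N)\le 4$ holds exactly for $N\in\{1,2,3,4,5,6,8,10,12\}$, and that this list is complete since $\phi(N)>4$ for every $N\ge 13$ as well as for $N\in\{7,9,11\}$. I would also invoke the standard monotonicity fact that $d\mid N$ implies $\phi(d)\mid\phi(N)$, hence $\phi(d)\le\phi(N)$; this is what reduces the condition on \emph{all} eigenvalues to a condition on a single eigenvalue of maximal degree.

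For the path, the eigenvalues are $2\cos\frac{\pi j}{n+1}=2\cos\frac{2\pi j}{2(n+1)}$ for $j=1,\dots,n$, so $N=2(n+1)$ and each $t=N/\gcd(j,N)$ divides $2(n+1)$. By monotonicity the largest degree is attained at $j=1$, namely by $\lambda_1(P_n)=2\cos\frac{\pi}{n+1}$, of degree $\frac12\phi(2(n+1))$; therefore $P_n$ is quadratic iff $\phi(2(n+1))\le 4$, and since $2(n+1)$ is even and at least $4$ this forces $2(n+1)\in\{4,6,8,10,12\}$, i.e. $n\le 5$. For the cycle, $N=n$ and the integer eigenvalue $2$ (from $j=0$) contributes nothing, so the maximal degree is attained by $\lambda_2(C_n)=2\cos\frac{2\pi}{n}$, of degree $\frac12\phi(n)$; hence $C_n$ is quadratic iff $\phi(n)\le 4$, i.e. $n\in\{3,4,5,6,8,10,12\}$ (using $n\ge 3$).

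I expect the main obstacle to be the step that isolates the maximal-degree eigenvalue: one must justify $\frac12\phi(t)\le\frac12\phi(N)$ for every $t\mid N$, and carefully track the factor of two hidden in the path spectrum, since its eigenvalues live among the $2(n+1)$-th, not the $(n+1)$-th, roots of unity. This factor is precisely why $n=7,9,11$ — which would survive if one only tested $\lambda_2(P_n)$, of degree $\frac12\phi(n+1)$ — are eliminated by $\lambda_1(P_n)$. If one prefers to avoid the monotonicity lemma, the two ``if'' directions can instead be settled by listing the finitely many spectra ($P_n$ for $n\le 5$, and $C_n$ for $n\in\{3,4,5,6,8,10,12\}$) and observing that each eigenvalue is rational or a quadratic surd such as $\sqrt2$, $\sqrt3$, or $\frac{\pm1\pm\sqrt5}{2}$.
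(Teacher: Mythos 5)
Your proof is correct, and it takes a genuinely different --- and in fact sharper --- route than the paper's. The paper tests only the single eigenvalue $\lambda_2(P_n)=2\cos\frac{2\pi}{n+1}$, whose degree $\frac{1}{2}\phi(n+1)$ is read off directly from Lemma \ref{lem-1}; since this is merely a necessary condition, the candidates $P_7,P_9,P_{11}$ (where $\phi(n+1)\le 4$ yet the path is not quadratic) survive the test and must be eliminated afterwards by explicitly factoring their characteristic polynomials. You instead compute the degree of \emph{every} eigenvalue by locating $2\cos\frac{\pi j}{n+1}$ among the $2(n+1)$-th roots of unity and invoking the divisor monotonicity $d\mid N\Rightarrow\phi(d)\mid\phi(N)$ to reduce the whole spectrum to the single worst case $\lambda_1(P_n)$, of degree $\frac{1}{2}\phi(2(n+1))$. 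The resulting criterion $\phi(2(n+1))\le 4$ (respectively $\phi(n)\le 4$ for $C_n$) is an exact characterization, so both directions of the equivalence come out of one totient computation and the residual factorization step disappears; as you observe, the doubled modulus is precisely what kills $n=7,9,11$, which the paper's $\lambda_2$ test cannot see. The trade-off is that the paper's argument uses less number theory and offloads the remaining cases to a finite computation, whereas yours needs the (standard, but not stated in the paper) monotonicity of $\phi$ along divisors, the identification of conjugates $\zeta_t^{j}+\zeta_t^{-j}$ with $\zeta_t+\zeta_t^{-1}$, and care with the hidden factor of two in the path spectrum --- all of which you flag and handle correctly. The cycle case is essentially the same in both treatments.
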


\begin{proof}
Obviously the path $P_1$ is quadratic. Now  we consider the path $P_n$ for  $n>1$.   By Lemma \ref{lem-Euler}, we can verify that the algebraic degree
of $\lambda_2(P_n)$ satisfies:
$$ \frac{1}{2}\phi(n+1)\left\{\begin{array}{ll}
\le 2,& \mbox{  if $2\le n\le 11$ and $n\not=6,8,10$;}\\
>2, &\mbox{  if  $n=6,8,10$ or $n>11$.}
 \end{array}\right.
 $$
  It implies that all possible quadratic  paths are included in  $P_2, P_3, P_4, P_5, P_7, P_9$ and $P_{11}$. By the factorization of the characteristic polynomials of these paths, one can simply  find that $P_1, P_2, P_3, P_4, P_5$ are all the quadratic  paths.

As similar as above arguments, by considering  the algebraic degree of $\lambda_2(C_n)=2cos\frac{2\pi}{n}$,  we identify  that $C_3, C_4, C_5, C_6, C_8, C_{10}, C_{12}$ are all the  quadratic cycles.
\end{proof}
For a square-free number $N>0$, the equation $x^2-Ny^2=-1$ is known as the negative Pell equation. More generally, the infinitely many solutions of the negative Pell equation are given by the following Lemma.
\begin{lem} [\cite{Koshy}]\label{lem-pell-1}
Suppose $(x, y)=(\alpha, \beta)$ is the least solution of $x^2-Ny^2=-1$, where $N>0$ and is square-free. Then the equation has infinitely many solutions, given by $x=[(\alpha+\beta\sqrt{N})^{2k-1}+(\alpha-\beta\sqrt{N})^{2k-1}]/2$, $y=[(\alpha+\beta\sqrt{N})^{2k-1}-(\alpha-\beta\sqrt{N})^{2k-1}]/(2\sqrt{N})$ for any positive integer $k$.
\end{lem}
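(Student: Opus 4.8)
The plan is to read the negative Pell equation inside the quadratic order $\mathbb{Z}[\sqrt{N}]$ and to exploit the minimality of the least solution. Write $\epsilon=\alpha+\beta\sqrt{N}$ and $\bar{\epsilon}=\alpha-\beta\sqrt{N}$, and let $\nu(a+b\sqrt{N})=(a+b\sqrt{N})(a-b\sqrt{N})=a^{2}-Nb^{2}$ be the (multiplicative) norm form, so that a pair of integers $(x,y)$ solves $x^{2}-Ny^{2}=-1$ precisely when $\nu(x+y\sqrt{N})=-1$. The hypothesis then reads $\epsilon\bar{\epsilon}=\nu(\epsilon)=\alpha^{2}-N\beta^{2}=-1$, and since $\alpha,\beta\ge 1$ we have $\epsilon>1$.

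The forward direction is a direct computation. For a positive integer $k$ put $\epsilon^{\,2k-1}=x_{k}+y_{k}\sqrt{N}$; as $\alpha,\beta\in\mathbb{Z}$ the coordinates $x_{k},y_{k}$ are integers, and conjugating gives $\bar{\epsilon}^{\,2k-1}=x_{k}-y_{k}\sqrt{N}$. Adding and subtracting these two identities returns exactly the displayed expressions for $x_{k}$ and $y_{k}$. By multiplicativity $\nu(\epsilon^{\,2k-1})=\nu(\epsilon)^{\,2k-1}=(-1)^{\,2k-1}=-1$, so each $(x_{k},y_{k})$ is a solution; because $\epsilon>1$ the numbers $\epsilon^{\,2k-1}$ are strictly increasing in $k$, so these solutions are pairwise distinct and infinite in number.

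It remains to prove completeness, namely that every solution arises this way, and this is where I expect the real work to lie. Let $\delta=x+y\sqrt{N}$ with $x,y>0$ and $\nu(\delta)=-1$, so $\delta>1$. The crux is to show that $\epsilon$ is the \emph{fundamental unit} of $\mathbb{Z}[\sqrt{N}]$, i.e.\ that no unit lies strictly between $1$ and $\epsilon$. Granting this, every unit $u>1$ is a power $\epsilon^{n}$ with $n\ge1$: writing $\epsilon^{n}\le u<\epsilon^{n+1}$ for a suitable $n\ge0$, the unit $u\epsilon^{-n}$ lies in $[1,\epsilon)$ and hence equals $1$, while $u>1$ forces $n\ge1$. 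Applying this to $\delta$ gives $\delta=\epsilon^{n}$, and then $\nu(\delta)=\nu(\epsilon)^{n}=(-1)^{n}=-1$ forces $n$ to be odd, say $n=2k-1$, which yields the claimed parametrization.

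The single obstacle, then, is the fundamental-unit claim, which I would settle by descent from the minimality of $\epsilon$. Suppose a unit $w$ satisfies $1<w<\epsilon$; then $\nu(w)=\pm1$. If $\nu(w)=-1$, then $\bar{w}=-w^{-1}\in(-1,0)$, so both coordinates of $w=\tfrac{1}{2}(w+\bar w)+\tfrac{1}{2\sqrt N}(w-\bar w)\sqrt N$ are positive, and $w$ is a solution of $x^{2}-Ny^{2}=-1$ smaller than $\epsilon$, contradicting minimality. If instead $\nu(w)=1$, then $w^{-1}=\bar{w}$ and the element $\epsilon\bar{w}=\epsilon w^{-1}$ lies in $(1,\epsilon)$ with $\nu(\epsilon\bar{w})=\nu(\epsilon)\nu(w)=-1$; by the previous case this is again a negative-Pell solution below $\epsilon$, a contradiction. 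Hence no unit occupies $(1,\epsilon)$, completing the argument. The only mildly delicate points are the sign and positivity bookkeeping for the coordinates and the degenerate small values of $N$, both of which are routine.
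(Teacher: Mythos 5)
Your argument is correct, but note that the paper offers no proof of this lemma at all: it is imported verbatim from the reference [Koshy] and used as a black box, so there is no in-paper proof to compare against. What you have written is the standard modern proof via the unit group of the order $\mathbb{Z}[\sqrt{N}]$: the forward direction by multiplicativity of the norm $\nu$, and exhaustiveness by showing $\epsilon=\alpha+\beta\sqrt{N}$ is the smallest unit exceeding $1$, with the descent split according to $\nu(w)=\pm1$. The descent is sound -- in the case $\nu(w)=-1$ the identities $w+\bar w=w-w^{-1}>0$ and $w-\bar w=w+w^{-1}>0$ do force both integer coordinates to be positive, and since $x^{2}=Ny^{2}-1$ makes $x$, $y$, and $x+y\sqrt{N}$ simultaneously monotone on positive solutions, $w<\epsilon$ genuinely contradicts minimality of $(\alpha,\beta)$ under any of the usual orderings. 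Textbook treatments such as Koshy's reach the same conclusion by a more elementary descent phrased directly on integer pairs (often via continued fractions) without naming units or norms; your version is shorter and makes the odd-exponent condition transparent from $\nu(\epsilon)^{n}=(-1)^{n}$. Two small points worth making explicit if this were to be included: the statement should be read as parametrizing the \emph{positive} solutions (the formulas visibly produce $x_{k},y_{k}>0$), and the hypotheses implicitly exclude the degenerate case $N=1$, where $\epsilon=1$ and the powers do not produce distinct solutions; for the application in the paper only $N=2$ is used, so nothing is at risk.
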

Particularly,  $(1, 1)$ is the least solution of $x^2-2y^2=-1$, and by  Lemma \ref{lem-pell-1}, $x=\frac{(1+\sqrt{2})^{2k-1}+(1-\sqrt{2})^{2k-1}}{2}$ and $y=\frac{(1+\sqrt{2})^{2k-1}-(1-\sqrt{2})^{2k-1}}{2\sqrt{2}}$ give infinitely many solutions for any positive integer $k$.

\begin{lem}\label{lem-pell}
Let $P(x,y)=y^2-4(x-2)$. Then $P(x,y)$ is  a square-free number for any positive integral solution $(x,y)$ of the negative Pell equation $x^2-2y^2=-1$.
\end{lem}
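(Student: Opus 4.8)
The plan is to first use the Pell relation to force $P$ into a rigid shape, then translate square-freeness into a divisibility problem in the Gaussian integers, and finally confront the genuinely arithmetic core.

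First I would eliminate $y$. Since $x^2-2y^2=-1$ gives $y^2=\tfrac{x^2+1}{2}$, substituting into $P=y^2-4(x-2)$ yields $P=\tfrac{x^2-8x+17}{2}$, that is $2P=(x-4)^2+1$. As $x$ is odd, $(x-4)^2\equiv 1\pmod 8$, so $2P\equiv 2\pmod 8$; hence $P$ is odd and $P\equiv 1\pmod 4$. Moreover any prime divisor $p$ of $P$ satisfies $(x-4)^2\equiv -1\pmod p$, so $-1$ is a quadratic residue mod $p$ and therefore $p\equiv 1\pmod 4$. Consequently $2$ and all primes $\equiv 3\pmod 4$ are excluded, and it suffices to prove that no prime $p\equiv 1\pmod 4$ has $p^2\mid P$.

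Next I would pass to the principal ideal domain $\mathbb{Z}[i]$. From $2P=(x-4+i)(x-4-i)$, and since the $(1+i)$-adic valuation of $x-4+i$ equals $v_2(N(x-4+i))=v_2(2P)=1$, the element $b:=\tfrac{x-4+i}{1+i}$ is coprime to $1+i$ with $N(b)=P$. A common divisor of $b$ and $\bar b$ divides $b+\bar b=x-3$ and $b-\bar b=(5-x)i$, hence divides $\gcd(x-3,x-5)\mid 2$, so is a power of $1+i$ and thus trivial; therefore $\gcd(b,\bar b)=1$. This means that for each rational prime $p=\pi\bar\pi\equiv 1\pmod 4$ at most one of $\pi,\bar\pi$ divides $b$ and $v_p(P)=v_\pi(b)$, so \emph{$P$ is square-free if and only if $b$ is square-free in $\mathbb{Z}[i]$}. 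I would also record the companion identity $y^2P=(y^2-2x)^2+4$, equivalently $a\bar b=(y^2-2x)-2i$ with $a:=\tfrac{x+i}{1+i}$, together with $N(a)=y^2$, $\gcd(a,\bar a)=1$ (so $a$ is a unit times a perfect square), and $\gcd(a,b)=1$ (as $a-b=2-2i$ is a power of $1+i$); these describe how the square part of $x+i$ is concentrated in $a$ and away from $b$, and they are the intended lever for the last step. Arguing by contradiction, suppose $p\equiv 1\pmod 4$ and $p^2\mid P$, i.e. $(x-4)^2\equiv -1\pmod{p^2}$. If $p\mid y$ then $x^2+1=2y^2\equiv 0\pmod{p^2}$, so $x^2\equiv (x-4)^2\equiv -1\pmod{p^2}$; subtracting gives $p^2\mid 8(x-2)$, whence $x\equiv 2$ and then $4\equiv -1\pmod{p^2}$, i.e. $p^2\mid 5$, impossible. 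Hence $p\nmid y$, and we are reduced to excluding the situation where $x-4$ is a square root of $-1$ modulo $p^2$ while $y$ is a unit.

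The remaining exclusion is the main obstacle, and I expect it to be the only nonroutine point. Every \emph{local} condition assembled so far is perfectly consistent: a number admitting the primitive representation $(x-4)^2+1^2$ may still carry the square of a prime $\equiv 1\pmod 4$, as $25=3^2+4^2$ shows, so primitivity alone cannot help and the argument must use the \emph{global} rigidity of the Pell orbit. Writing $r$ for a fixed square root of $-1$ modulo $p^2$, the surviving case is $x\equiv 4\pm r\pmod{p^2}$, and by Lemma~\ref{lem-pell-1} the admissible residues are exactly the first coordinates of the orbit of $1+\sqrt2$ under multiplication by $\epsilon^2=3+2\sqrt2$ inside $(\mathbb{Z}[\sqrt2]/p^2)^\times$; the task is to show this orbit misses $4\pm r$. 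My plan to finish is to lift the whole picture to the octic ring $\mathbb{Z}[\zeta_8]\supset\mathbb{Z}[\sqrt2,i]$, express $x+y\sqrt2=(1+\sqrt2)^{2k-1}$ there, and show that a repeated Gaussian prime dividing $b$ cannot be absorbed by a power of the fundamental unit $\epsilon=1+\sqrt2$ in a way compatible with $a$ being a unit times a square and $\gcd(a,b)=1$; an alternative would be an infinite descent on $k$ producing a strictly smaller positive solution inheriting $p^2\mid P$. I want to flag honestly that this is the hard part: the naive "collision'' argument does not close, because the square part of $b$ is a priori disjoint from $a$, so the proof must genuinely track how the fundamental unit acts on square roots of $-1$ modulo $p^2$ along the entire tower of solutions, rather than at any single place.
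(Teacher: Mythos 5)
Your proposal does not prove the lemma. Everything you establish is sound as far as it goes: the identity $2P=(x-4)^2+1$, the congruences ($x$ odd, $P\equiv 1\pmod 4$, every prime divisor of $P$ is $\equiv 1\pmod 4$), the passage to $\mathbb{Z}[i]$ with $b=\tfrac{x-4+i}{1+i}$, $N(b)=P$, $\gcd(b,\bar b)=1$, and the elimination of the case $p\mid y$. But all of this only \emph{rephrases} the statement: square-freeness of $P$ becomes square-freeness of $b$ in $\mathbb{Z}[i]$, and the surviving case ($p\equiv 1 \pmod 4$, $p\nmid y$, $(x-4)^2\equiv -1\pmod{p^2}$) is the entire content of the lemma. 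At that point you offer only a plan (lifting to $\mathbb{Z}[\zeta_8]$, tracking the action of the fundamental unit, or a descent), not an argument, and you flag this yourself. So the verdict is: genuine gap --- the arithmetic core is untouched.

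The comparison with the paper is, however, instructive, because the paper's own proof does not close this gap either; in fact it is erroneous, and your correct identity exposes where. The paper substitutes the closed form from Lemma \ref{lem-pell-1} and claims $P=\bigl[\tfrac{(1+\sqrt2)^{2k-1}+(1-\sqrt2)^{2k-1}-8}{2\sqrt2}\bigr]^2=\tfrac{(x-4)^2}{2}$, then deduces square-freeness from the fact that $\tfrac{x-4}{\sqrt2}$ is not an integer. Two things fail there. First, the algebra is wrong: writing $u=(1+\sqrt2)^{2k-1}$, $v=(1-\sqrt2)^{2k-1}$, one has $uv=-1$, so the expansion of $y^2-4(x-2)$ carries $-2uv=+2$ while the claimed square $(u+v-8)^2$ carries $+2uv=-2$; the true value is $P=\tfrac{(x-4)^2+1}{2}$, exactly your identity, and numerically $P(1,1)=5$ whereas $\tfrac{(1-4)^2}{2}=\tfrac92$. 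Second, even granting the claimed identity, ``$P$ is the square of a non-integer'' only shows $P$ is not a perfect square, which is far weaker than square-free (e.g.\ $12=(2\sqrt3)^2$). So your incomplete attempt is actually more solid than the paper's argument: the difficulty you isolate --- square-freeness of $\tfrac{(x-4)^2+1}{2}$ along the exponentially sparse Pell sequence --- is genuinely hard, of the same flavor as open square-freeness questions for recurrence sequences, and it is not resolved by the paper's proof either.
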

\begin{proof}
For any positive integral solution $(x,y)$ of  Pell equation $x^2-2y^2=-1$, we have
$$\begin{array}{ll}&\!\!\!\!P(x,y)=y^2-4(x-2)\\&\!\!\!\!=[\frac{(1+\sqrt{2})^{2k-1}-(1-\sqrt{2})^{2k-1}}{2\sqrt{2}}]^2-4(\frac{(1+\sqrt{2})^{2k-1}+(1-\sqrt{2})^{2k-1}}{2}-2)\\
&\!\!\!\!=\!\!\frac{1}{8}[((1\!+\!\sqrt{2})^{2k-1})^2\!\!+\!\!((1\!-\!\sqrt{2})^{2k-1})^2\!\!-\!\!2(1\!\!+\!\!\sqrt{2})^{2k-1}(1\!\!-\!\!\sqrt{2})^{2k-1}\!\!-\!\!16(1\!\!+\!\!\sqrt{2})^{2k-1}\!\!-\!\!16(1\!\!-\!\!\sqrt{2})^{2k-1}\!\!+\!\!64]\\
&\!\!\!\!=[\frac{(1+\sqrt{2})^{2k-1}+(1-\sqrt{2})^{2k-1}-8}{2\sqrt{2}}]^2.
\end{array}$$
Note that $\frac{(1+\sqrt{2})^{2k-1}+(1-\sqrt{2})^{2k-1}-8}{2\sqrt{2}}=\frac{2\sum_{i=0}^{k-1}\binom{2k-1}{2i}(\sqrt{2})^{2i}-8}{2\sqrt{2}}=\frac{\sum_{i=0}^{k-1}\binom{2k-1}{2i}2^{i}-4}{\sqrt{2}}$ is not integral, we claim that $P(x,y)$ is a square-free number.
\end{proof}

\section{The forms of the characteristic polynomials of quadratic starlike trees}

Notice that quadratic path is given in Theorem \ref{thm-path-cyc-1} and $K_{1,3}$ is quadratic since its characteristic polynomial  $f_{K_{1,3}}(x)=x^2(x^2-3)$, in what follows we  always assume that a starlike tree $T$ contains $K_{1,3}$ as its proper induced subgraph, i.e., $T \supset K_{1,3}$.

\begin{lem}\label{factor}
Let $p(x) \in\mathbb{Z}[x]$ be a monic irreducible  polynomial  of degree at most $2$, and its roots lie in the interval $(-2,2)$. Then  $p(x)\in \{x,\  x\pm 1,\  x^2-2,\ x^2\pm x-1,\ x^2-3\}$.
\end{lem}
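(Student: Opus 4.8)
My plan is to split according to the degree of $p$ and reduce each case to elementary bounds on the coefficients.

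If $p$ has degree $1$, then $p(x)=x-a$ with $a\in\mathbb{Z}$, and its only root $a$ must lie in $(-2,2)$; hence $a\in\{-1,0,1\}$, giving the three linear members $x$ and $x\pm1$ of the list. If $p$ has degree $2$, I write $p(x)=x^2+bx+c$ with $b,c\in\mathbb{Z}$. Since $p$ is monic and irreducible over $\mathbb{Q}$, it has no rational root, so its discriminant $\Delta=b^2-4c$ is a \emph{positive non-square}; in particular the two roots $r_1<r_2$ are real, distinct and irrational.

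The next step is to localize these roots. Because $p$ is an upward parabola whose vertex sits at $x=-b/2$ (the midpoint of $r_1,r_2$), the condition $r_1,r_2\in(-2,2)$ is equivalent to the system
\[
\Delta>0,\qquad p(2)=4+2b+c>0,\qquad p(-2)=4-2b+c>0,\qquad -4<b<4.
\]
Indeed, $p(\pm2)>0$ places $\pm2$ outside the segment $[r_1,r_2]$, while $-4<b<4$ forces the midpoint $-b/2$ into $(-2,2)$, so the two roots cannot both fall on one side of the interval. From $p(2)>0$ and $p(-2)>0$ I obtain $c>2|b|-4$, from $\Delta>0$ I obtain $c<b^2/4$, and the midpoint condition yields $|b|\le 3$.

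It then remains to enumerate. For $b=0$ the bounds give $c\in\{-3,-2,-1\}$; for $|b|=1$ they give $c\in\{-1,0\}$; and for $|b|=2$ or $|b|=3$ the interval $(2|b|-4,\,b^2/4)$ contains no integer, so these are void. Finally I impose that $\Delta=b^2-4c$ be a non-square, which discards exactly the reducible pairs $b=0,c=-1$ (where $\Delta=4$, i.e.\ $x^2-1$) and $c=0,b=\pm1$ (where $\Delta=1$, i.e.\ $x^2\pm x$). The surviving quadratics are $x^2-2$ and $x^2-3$ (from $b=0$) together with $x^2\pm x-1$ (from $b=\pm1$), which with the linear cases give precisely the asserted set. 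I expect no genuine obstacle: this is a short finite check once the bounds are established. The only point deserving care is the localization step — in particular that $p(\pm2)>0$ by itself is \emph{not} sufficient (both roots could lie to one side of $(-2,2)$), so the midpoint constraint $-4<b<4$ must be included to pin the roots strictly inside the interval before the coefficient enumeration is carried out.
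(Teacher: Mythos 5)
Your proof is correct and follows essentially the same route as the paper: bound the integer coefficients via $p(\pm2)>0$, the positivity of the discriminant, and the sum of roots lying in $(-4,4)$, then enumerate the finitely many pairs and discard the reducible ones. Your explicit remark that $p(\pm2)>0$ alone does not localize the roots, and your use of ``non-square discriminant'' (rather than the paper's slightly imprecise ``square-free'') are minor improvements in rigor, but the argument is the same.
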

\begin{proof}
First of all, if the degree of $p(x)$ equals $1$ then $p(x)=x$ or $x\pm 1$ since its roots lie in the interval $(-2,2)$. Next, suppose the degree of $p(x)$ equals $2$, that is  $p(x)=x^2-ax+b$. Let $\alpha$ and $\bar{\alpha}$ be two  roots of $p(x)$.  Since $\alpha, \bar{\alpha}\in (-2,2)$, we have  $\Delta=a^2-4b>0$,  $-4<\alpha+\overline{\alpha}=a<4$, and $p(2),p(-2)>0$, from which we deduce that \begin{equation}\label{lem-eq-1}max\{-4-2a,-4+2a\} <b< \frac{a^2}{4}<4.\end{equation}
Also note that $p(x)$ is irreducible, i.e., the discriminant $\Delta$ is a square-free number,  we can simply find that (\ref{lem-eq-1}) has only four solutions: $a=-1, b=-1$; $a=0, b=-2$; $a=0, b=-3$; $a=1, b=-1$. Therefore, $ p(x)=x^2-2$, $x^2-3$ or $x^2\pm x-1$.

The proof is completed.
\end{proof}
\begin{remark}
If $p(x) \in\mathbb{Z}[x]$ is a monic  quadratic irreducible  polynomial, and there is only one root lying in the interval $(-2,2)$, then
$p(x)$ is not necessary to be in  $\{x,\  x\pm 1,\  x^2-2,\ x^2\pm x-1,\ x^2-3\}$. For example $p(x)=x^2-5x+5$ has roots $\frac{5-\sqrt{5}}{2}\in (-2,2)$ and $\frac{5+\sqrt{5}}{2}\in (2,+\infty)$.
\end{remark}
It follows  the result from Lemma \ref{factor}.
\begin{cor}\label{factor-cor-1}
Let $p(x) \in\mathbb{Z}[x]$ be a monic irreducible  polynomial  of degree at most $2$, and its roots lie in the interval $(-2,2)$. Then the roots of $p(x)$ are included in $\{0,\pm1,\pm\sqrt{2},\pm\frac{1\pm\sqrt{5}}{2},\pm\sqrt{3}\}$.
\end{cor}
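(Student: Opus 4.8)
The plan is to read this off directly from Lemma~\ref{factor}, which has already done the substantive work of classifying the admissible polynomials; all that remains is to list the zeros of each member of the finite set produced there. So I would begin by invoking Lemma~\ref{factor} to conclude that $p(x)$ must be one of $x$, $x\pm 1$, $x^2-2$, $x^2\pm x-1$, or $x^2-3$, and then simply solve each.

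Computing the roots: $x$ has root $0$; the factors $x-1$ and $x+1$ contribute $\pm 1$; $x^2-2$ contributes $\pm\sqrt{2}$; $x^2-3$ contributes $\pm\sqrt{3}$; and the two quadratics $x^2-x-1$ and $x^2+x-1$ contribute, by the quadratic formula, $\frac{1\pm\sqrt{5}}{2}$ and $\frac{-1\pm\sqrt{5}}{2}$ respectively. Taking the union of all these values yields exactly $\{0,\pm1,\pm\sqrt{2},\pm\frac{1\pm\sqrt{5}}{2},\pm\sqrt{3}\}$, which is the asserted set.

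The only point requiring a moment's care --- and the closest thing to an obstacle in an otherwise immediate deduction --- is verifying that the compact notation $\pm\frac{1\pm\sqrt{5}}{2}$ really captures all four golden-ratio-type roots and introduces no spurious ones. This is clear once one observes that $-\frac{1+\sqrt{5}}{2}=\frac{-1-\sqrt{5}}{2}$ and $-\frac{1-\sqrt{5}}{2}=\frac{-1+\sqrt{5}}{2}$, so the four signed expressions coincide precisely with the four roots of $x^2\pm x-1$. Since every remaining polynomial in the list is either linear or of the form $x^2-c$, its roots are transparently among the listed values, and the corollary follows at once from Lemma~\ref{factor}.
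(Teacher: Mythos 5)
Your proposal is correct and coincides with the paper's treatment: the paper simply states that the corollary follows from Lemma \ref{factor}, and your argument fills in exactly the intended details by listing the roots of each polynomial in the finite set $\{x,\ x\pm 1,\ x^2-2,\ x^2\pm x-1,\ x^2-3\}$. The check that $\pm\frac{1\pm\sqrt{5}}{2}$ enumerates precisely the four roots of $x^2\pm x-1$ is a sensible precaution and is verified correctly.
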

\begin{lem}\label{starlike-eig-1}
Let $T$ be a starlike tree, then $\lambda_2(T)<2$.
\end{lem}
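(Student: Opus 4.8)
The plan is to bound $\lambda_2(T)$ from above by the largest eigenvalue of the forest obtained by deleting the center vertex. Since $T=T_{n_1,n_2,\ldots,n_k}$ is starlike with center $u$, we have $T-u=\cup_{i=1}^{k}n_iP_i$, a disjoint union of paths. By Lemma \ref{lem-path-cycle-polynomial}, the largest eigenvalue of a path $P_i$ is $\lambda_1(P_i)=2\cos\frac{\pi}{i+1}$, which is strictly less than $2$ for every $i$. As the spectrum of a disjoint union is the union of the spectra of its components, the largest eigenvalue of $T-u$ equals $\lambda_1(T-u)=\max_{i:\,n_i>0}2\cos\frac{\pi}{i+1}<2$; the strictness survives because the maximum is taken over finitely many values, each strictly below $2$.

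The second step is a single application of the Interlacing Theorem (Lemma \ref{interlace}) with $G=T$ on $n$ vertices and $H=T-u$ on $m=n-1$ vertices. Taking $i=1$ in the inequality $\lambda_i(G)\ge\mu_i\ge\lambda_{n-m+i}(G)$ yields $\mu_1\ge\lambda_{2}(T)$, that is, $\lambda_2(T)\le\lambda_1(T-u)$. Combining this with the bound from the first step gives $\lambda_2(T)\le\lambda_1(T-u)<2$, as required.

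There is essentially no hard obstacle here; the only points that need care are (i) aligning the interlacing indices correctly, so that it is indeed $\lambda_2(T)$ (and not $\lambda_1(T)$) that is controlled by $\mu_1=\lambda_1(T-u)$, and (ii) noting that even if equality $\lambda_2(T)=\lambda_1(T-u)$ were to occur, the conclusion $\lambda_2(T)<2$ still follows because $\lambda_1(T-u)$ is itself strictly below $2$. It is worth remarking that this argument does not use the standing hypothesis $T\supset K_{1,3}$ and applies to every starlike tree; the largest eigenvalue $\lambda_1(T)$ may well exceed $2$, which is precisely why deleting the high-degree center and working with the resulting union of paths is the natural move.
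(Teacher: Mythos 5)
Your proof is correct and follows exactly the same route as the paper: delete the center vertex $u$, observe that $T-u$ is a disjoint union of paths whose largest eigenvalue is strictly less than $2$ by Lemma \ref{lem-path-cycle-polynomial}, and apply the Interlacing Theorem with $m=n-1$ and $i=1$ to get $\lambda_2(T)\le\lambda_1(T-u)<2$. The only difference is that you spell out the index bookkeeping and the strictness of the bound, which the paper leaves implicit.
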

\begin{proof}
Suppose that $T$ is a starlike tree with the center vertex $u$, then $T-u$ is the disjoint union of some paths. By Lemma \ref{interlace} and Lemma \ref{lem-path-cycle-polynomial}, we have   $\lambda_2(T) \leq \lambda_1(T-u)<2$.
\end{proof}
\begin{lem}\label{starlike-rad}
Let $T \supset K_{1,3} $ be a quadratic starlike tree. Then $\lambda_1(T)\geq2$.
\end{lem}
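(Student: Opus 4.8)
The plan is to argue by contradiction: assume $\lambda_1(T)<2$ and trap $\lambda_1(T)$ between two incompatible bounds. The quadratic hypothesis will force $\lambda_1(T)\le\sqrt3$, while a small induced subgraph will force $\lambda_1(T)>\sqrt3$, and these cannot coexist.

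For the upper bound I would first note that $\lambda_1(T)$ is the spectral radius of the connected graph $T$, so $|\lambda_i(T)|\le\lambda_1(T)<2$ for every eigenvalue, i.e.\ all eigenvalues of $T$ lie in $(-2,2)$. Since $T$ is quadratic, each irreducible factor $p(x)$ of $f_T(x)$ has degree at most two, and both of its roots are eigenvalues of $T$, hence also lie in $(-2,2)$. Lemma~\ref{factor} and Corollary~\ref{factor-cor-1} then confine every eigenvalue to the finite set $S=\{0,\pm1,\pm\sqrt2,\pm\frac{1\pm\sqrt5}{2},\pm\sqrt3\}$. As $\lambda_1(T)$ is positive, it must be one of $1,\sqrt2,\frac{\sqrt5-1}{2},\frac{1+\sqrt5}{2},\sqrt3$, whence $\lambda_1(T)\le\sqrt3$.

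For the lower bound I would exhibit an induced subgraph whose spectral radius already exceeds $\sqrt3$. Since $T\supset K_{1,3}$, its center $u$ has degree at least $3$. If $d_T(u)\ge4$, then $u$ together with four of its neighbors induces a $K_{1,4}$, and $\lambda_1(K_{1,4})=2$. If $d_T(u)=3$, then $T\ne K_{1,3}$ forces some branch to have length at least $2$; taking $u$, the first vertex of two of the branches, and the first two vertices of a branch of length $\ge2$ yields an induced copy of the starlike tree $T_{2,1}$ (two pendant edges and one pendant path of two vertices at the center). Using Lemma~\ref{starlike-poly} one computes $f_{T_{2,1}}(x)=x(x^4-4x^2+2)$, whose largest root is $\sqrt{2+\sqrt2}$. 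In either case the induced subgraph has spectral radius at least $\sqrt{2+\sqrt2}$, so by the Interlacing Theorem (Lemma~\ref{interlace}) we get $\lambda_1(T)\ge\sqrt{2+\sqrt2}>\sqrt3$.

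Combining the two estimates gives $\sqrt3<\lambda_1(T)\le\sqrt3$, a contradiction, and therefore $\lambda_1(T)\ge2$. The crux of the argument is the numerical gap $\sqrt{2+\sqrt2}>\sqrt3$: I must pick a minimal induced subgraph (here $T_{2,1}$, i.e.\ the Dynkin tree $D_5$) whose spectral radius strictly overshoots $\sqrt3$, the largest element of $S$ lying below $2$. This gap is precisely what eliminates the near-miss starlike trees $D_n,E_6,E_7,E_8$, whose spectral radii fall in the window $(\sqrt3,2)$ but which are not quadratic. The only other point needing care is the purely combinatorial claim that every admissible $T$ contains $K_{1,4}$ or $T_{2,1}$ as an induced subgraph, which follows directly from the starlike structure but should be stated explicitly.
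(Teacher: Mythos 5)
Your proof is correct and follows essentially the same route as the paper: both derive $\lambda_1(T)\le\sqrt3$ from the assumption $\lambda_1(T)<2$ together with the quadratic hypothesis via Corollary~\ref{factor-cor-1}, and then contradict this using the proper containment $T\supsetneq K_{1,3}$. The only difference is that the paper obtains the contradiction directly from $\lambda_1(T)\le\sqrt3=\lambda_1(K_{1,3})$, implicitly invoking the strict monotonicity of the spectral radius of a connected graph under proper induced containment, whereas you make the strict inequality explicit by interlacing with $K_{1,4}$ or $T_{2,1}$ (spectral radius $\sqrt{2+\sqrt2}>\sqrt3$) --- a slightly more self-contained justification of the same step, not a different argument.
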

\begin{proof}
Suppose to the contrary that $\lambda_1(T)<2$. Then $\lambda_n(T)=-\lambda_1(T)>-2$ by the symmetry of the spectrum of a tree, and thus all the eigenvalues of $T$ are contained in the interval $(-2,2)$. Since $T$ is quadratic, from Corollary \ref{factor-cor-1} we  have $\lambda_1(T)\leq\sqrt{3}=\lambda_1(K_{1,3})$,  a contradiction.
\end{proof}

\begin{remark}
It worths mentioning that the condition of quadratic starlike tree is  necessary in Lemma \ref{starlike-rad}.   For example, let $T\not=C_n$ be  a Smith graph (Fig \ref{fig-smith}), and then $\lambda_1(T)=2$. It is easy to see that there exists some vertex $v$ of $T$ such that $T-v$ is also starlike tree containing $K_{1,3}$ as its proper subgraph,  but $\lambda_1(T-v)<2$ due to $\lambda_1(T)=2$.
\end{remark}
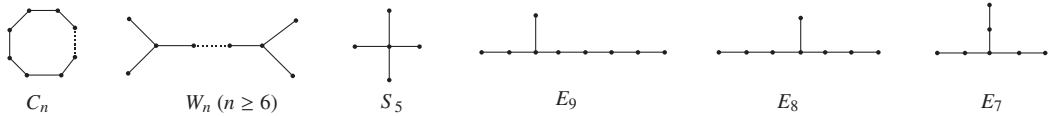
\begin{figure}[h]
\unitlength 0.15mm 
\scriptsize
\linethickness{0.4pt}
\ifx\plotpoint\undefined\newsavebox{\plotpoint}\fi 
\begin{center}
\begin{picture}(917.9,79.8)
\put(17.4,74.7){\circle*{4}}
\put(42.8,74.7){\circle*{4}}
\qbezier(17.4,74.7)(30.1,74.7)(42.8,74.7)
\put(0.0,57.3){\circle*{4}}
\qbezier(17.4,74.7)(8.7,66.0)(0.0,57.3)
\put(0.0,32.6){\circle*{4}}
\qbezier(0.0,57.3)(0.0,45.0)(0.0,32.6)
\put(15.2,17.4){\circle*{4}}
\qbezier(0.0,32.6)(7.6,25.0)(15.2,17.4)
\put(45.7,17.4){\circle*{4}}
\qbezier(15.2,17.4)(30.5,17.4)(45.7,17.4)
\put(58.0,59.5){\circle*{4}}
\qbezier(42.8,74.7)(50.4,67.1)(58.0,59.5)
\put(58.0,33.4){\circle*{4}}
\qbezier(45.7,17.4)(51.8,25.4)(58.0,33.4)
\qbezier[7](58.0,59.5)(58.0,46.4)(58.0,33.4)
\put(105.9,67.4){\circle*{4}}
\put(129.8,43.5){\circle*{4}}
\qbezier(105.9,67.4)(117.8,55.5)(129.8,43.5)
\put(105.1,18.9){\circle*{4}}
\qbezier(105.1,18.9)(117.5,31.2)(129.8,43.5)
\put(163.1,43.5){\circle*{4}}
\qbezier(129.8,43.5)(146.5,43.5)(163.1,43.5)
\put(195.0,43.5){\circle*{4}}
\qbezier[8](163.1,43.5)(179.1,43.5)(195.0,43.5)
\put(224.0,43.5){\circle*{4}}
\qbezier(195.0,43.5)(209.5,43.5)(224.0,43.5)
\put(253.8,66.0){\circle*{4}}
\qbezier(224.0,43.5)(238.9,54.7)(253.8,66.0)
\put(250.9,16.7){\circle*{4}}
\qbezier(224.0,43.5)(237.4,30.1)(250.9,16.7)
\put(306.0,42.8){\circle*{4}}
\put(336.4,42.8){\circle*{4}}
\qbezier(306.0,42.8)(321.2,42.8)(336.4,42.8)
\put(336.4,13.1){\circle*{4}}
\qbezier(336.4,42.8)(336.4,27.9)(336.4,13.1)
\put(336.4,74.7){\circle*{4}}
\qbezier(336.4,74.7)(336.4,58.7)(336.4,42.8)
\put(363.2,42.8){\circle*{4}}
\qbezier(336.4,42.8)(349.8,42.8)(363.2,42.8)
\put(418.3,37.7){\circle*{4}}
\put(443.0,37.7){\circle*{4}}
\qbezier(418.3,37.7)(430.7,37.7)(443.0,37.7)
\put(466.2,37.7){\circle*{4}}
\qbezier(443.0,37.7)(454.6,37.7)(466.2,37.7)
\put(486.5,37.7){\circle*{4}}
\qbezier(466.2,37.7)(476.3,37.7)(486.5,37.7)
\put(510.4,37.7){\circle*{4}}
\qbezier(486.5,37.7)(498.4,37.7)(510.4,37.7)
\put(533.6,37.7){\circle*{4}}
\qbezier(510.4,37.7)(522.0,37.7)(533.6,37.7)
\put(556.8,37.7){\circle*{4}}
\qbezier(533.6,37.7)(545.2,37.7)(556.8,37.7)
\put(580.7,37.7){\circle*{4}}
\qbezier(556.8,37.7)(568.8,37.7)(580.7,37.7)
\put(466.2,70.3){\circle*{4}}
\qbezier(466.2,70.3)(466.2,54.0)(466.2,37.7)
\put(628.6,37.7){\circle*{4}}
\put(653.2,37.7){\circle*{4}}
\qbezier(628.6,37.7)(640.9,37.7)(653.2,37.7)
\put(676.4,37.7){\circle*{4}}
\qbezier(653.2,37.7)(664.8,37.7)(676.4,37.7)
\put(701.1,37.7){\circle*{4}}
\qbezier(676.4,37.7)(688.8,37.7)(701.1,37.7)
\put(722.8,37.7){\circle*{4}}
\qbezier(701.1,37.7)(712.0,37.7)(722.8,37.7)
\put(746.0,37.7){\circle*{4}}
\qbezier(722.8,37.7)(734.4,37.7)(746.0,37.7)
\put(770.0,37.7){\circle*{4}}
\qbezier(746.0,37.7)(758.0,37.7)(770.0,37.7)
\put(701.1,68.2){\circle*{4}}
\qbezier(701.1,68.2)(701.1,52.9)(701.1,37.7)
\put(822.2,37.0){\circle*{4}}
\put(846.8,37.0){\circle*{4}}
\qbezier(822.2,37.0)(834.5,37.0)(846.8,37.0)
\put(868.6,37.0){\circle*{4}}
\qbezier(846.8,37.0)(857.7,37.0)(868.6,37.0)
\put(894.7,37.0){\circle*{4}}
\qbezier(868.6,37.0)(881.6,37.0)(894.7,37.0)
\put(917.9,37.0){\circle*{4}}
\qbezier(894.7,37.0)(906.3,37.0)(917.9,37.0)
\put(868.6,58.0){\circle*{4}}
\qbezier(868.6,37.0)(868.6,47.5)(868.6,58.0)
\put(868.6,79.8){\circle*{4}}
\qbezier(868.6,79.8)(868.6,68.9)(868.6,58.0)
\put(15.1,0.0){\makebox(0,0)[tl]{$C_n$}}
\put(155.7,0.0){\makebox(0,0)[tl]{$W_n~(n\geq6)$}}
\put(329.2,0.5){\makebox(0,0)[tl]{$S_5$}}
\put(483.6,2.9){\makebox(0,0)[tl]{$E_9$}}
\put(680.1,0.4){\makebox(0,0)[tl]{$E_8$}}
\put(860.6,0.6){\makebox(0,0)[tl]{$E_7$}}
\end{picture}
\end{center}
\caption{Smith graphs}\label{fig-smith}
\end{figure}
For the quadratic Smith graphs, we have the following result.  \\
(i) $C_n$ is quadratic if and only if  $n\in \{3, 4,5, 6, 8, 10, 12\}$ (see Theorem \ref{thm-path-cyc-1}).\\
(ii) $W_n$ is quadratic if and only if $6 \leq n\leq 9$.

In fact, from Section 3.3 in \cite{Brouwer2}, we know that  $Spec(W_n)=\{\pm2, 0^2, 2cos\frac{i\pi}{n-3}\mid  i=1,\ldots, n-4 \}$, and then the algebraic degree of $\lambda_3(W_n)=2cos\frac{2\pi}{n-3}$ is $\frac{1}{2}\phi(n-3)$ according to Lemma \ref{lem-1}. From Lemma \ref{lem-Euler}, we can verify that $\frac{1}{2}\phi(n-3)\leq2$ if and only if $5\leq n\leq9$ and $n=11,13,15$.  By direct calculation of   their characteristic polynomials, we know  that $W_n$  is quadratic for $6\leq n\leq9$  but $W_{11}$, $W_{13}$ and $W_{15}$ are not. \\
(iii) One can directly verify that $S_5,E_9,E_8,E_7$
are quadratic.

As an inverse proposition of Lemma \ref{starlike-rad}, we have the following result.
\begin{cor}
Let $T \supset K_{1,3} $ be a starlike tree. If $\lambda_1(T)<2$ then $T$ is not quadratic.
\end{cor}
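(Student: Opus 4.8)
The plan is to recognize that this corollary is nothing more than the contrapositive of Lemma \ref{starlike-rad}, so no genuinely new argument is required. Lemma \ref{starlike-rad} asserts that for a starlike tree $T \supset K_{1,3}$, being quadratic forces $\lambda_1(T)\geq 2$. Writing $P$ for the statement ``$T$ is quadratic'' and $Q$ for the statement ``$\lambda_1(T)\geq 2$'' (both under the standing assumption that $T \supset K_{1,3}$ is a starlike tree), that lemma is exactly the implication $P \Rightarrow Q$. The corollary is precisely $\neg Q \Rightarrow \neg P$, i.e. ``$\lambda_1(T)<2$ implies $T$ is not quadratic,'' which is the contrapositive of Lemma \ref{starlike-rad} and hence logically equivalent to it.

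Concretely, I would phrase the argument by contradiction. Suppose $T \supset K_{1,3}$ is a starlike tree with $\lambda_1(T)<2$ that is nevertheless quadratic. Then $T$ meets all the hypotheses of Lemma \ref{starlike-rad}, so that lemma yields $\lambda_1(T)\geq 2$, contradicting $\lambda_1(T)<2$. Therefore no such $T$ can exist, which is exactly the claim of the corollary.

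I expect no real obstacle here: the entire content is the logical equivalence, and all the mathematical substance already resides in the proof of Lemma \ref{starlike-rad}. That proof in turn uses the symmetry of the spectrum of a tree (so that $\lambda_1(T)<2$ pushes every eigenvalue into the open interval $(-2,2)$) together with Corollary \ref{factor-cor-1}, which forces the roots of each quadratic irreducible factor into the set $\{0,\pm 1,\pm\sqrt{2},\pm\frac{1\pm\sqrt{5}}{2},\pm\sqrt{3}\}$ and hence bounds the spectral radius of such a quadratic graph by $\sqrt{3}=\lambda_1(K_{1,3})$. Since these ingredients are already available, the corollary is immediate and warrants only the one-line contrapositive remark.
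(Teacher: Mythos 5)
Your proof is correct and matches the paper exactly: the paper states this corollary with no proof beyond the remark that it is the ``inverse proposition'' of Lemma \ref{starlike-rad}, i.e.\ precisely the contrapositive you identify. Nothing further is needed.
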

\begin{thm}\label{star-thm-1}
Let $T \supset K_{1,3} $ be a quadratic starlike tree. Then  the characteristic polynomial $f_T(x)$ of  $T$ has the two forms:
\begin{gather}
f_T(x)=x^{t_1}(x^2-1)^{t_2}(x^2-2)^{t_3}((x^2-x-1)(x^2+x-1))^{t_4}(x^2-3)^{t_5}(x^2-c), \tag{I}
\end{gather}
where $c\geq 4$, $t_1,...,t_5\ge 0$ are all integers and $\lambda_1(T)$ is a root of $x^2-c$, or
\begin{gather}\nonumber
\!\!\!\!\!\!\!\!f_T(x)\!\!=\!\!x^{t_1}(x^2\!-\!1)^{t_2}(x^2\!-\!2)^{t_3}((x^2\!-\!x\!-\!1)(x^2\!+\!x\!-\!1))^{t_4}(x^2\!-\!3)^{t_5}
(x^2\!-\!ax\!+\!b)(x^2\!+\!ax\!+\!b), \tag{II}
\end{gather}
where $a>0$,  $t_1,...,t_5\ge 0$ are all integers;  $\Delta=a^2-4b>0$ is  a square-free number, and $\lambda_1(T)$ is a root of $x^2-ax+b$.
\end{thm}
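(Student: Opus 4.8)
The plan is to factor $f_T(x)$ over $\mathbb{Z}$ into monic irreducible pieces and to sort each piece by where its roots sit relative to the interval $(-2,2)$, exploiting the two bounds already in hand: $\lambda_2(T)<2$ (Lemma \ref{starlike-eig-1}) and $\lambda_1(T)\ge 2$ (Lemma \ref{starlike-rad}). First I would use that $T$, being a tree, is bipartite, so its spectrum is symmetric about $0$ and $f_T(-x)=(-1)^n f_T(x)$; thus $\lambda_n(T)=-\lambda_1(T)\le -2$, and $\lambda_1(T)$ is simple by Perron--Frobenius. Together with $\lambda_2(T)<2$ this shows that $\lambda_1(T)$ is the unique eigenvalue in $[2,\infty)$, $\lambda_n(T)$ the unique eigenvalue in $(-\infty,-2]$, and every other eigenvalue lies in $(-2,2)$.

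Next I would single out the minimal polynomial $q(x)$ of $\lambda_1(T)$ together with its mirror $\widetilde q(x)$, the monic normalization of $q(-x)$, which is the minimal polynomial of $\lambda_n=-\lambda_1$. Since distinct monic irreducible integer polynomials are coprime, any irreducible factor of $f_T$ other than $q$ and $\widetilde q$ has neither $\lambda_1$ nor $\lambda_n$ as a root, so all of its roots lie in $(-2,2)$; Lemma \ref{factor} then forces it into $\{x,\,x\pm 1,\,x^2-2,\,x^2\pm x-1,\,x^2-3\}$. The symmetry $f_T(-x)=(-1)^nf_T(x)$ makes $x-1$ and $x+1$ occur with equal multiplicity, and likewise $x^2-x-1$ and $x^2+x-1$, while $x$, $x^2-2$, $x^2-3$ are each fixed by $x\mapsto -x$; collecting these yields precisely the common prefix $x^{t_1}(x^2-1)^{t_2}(x^2-2)^{t_3}((x^2-x-1)(x^2+x-1))^{t_4}(x^2-3)^{t_5}$.

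It then remains to read off the factor contributed by $\lambda_1$ and $\lambda_n$, which enters $f_T$ to the first power because $\lambda_1$ is simple. I would split into cases on $q$. If $\deg q=1$ then $\lambda_1\in\mathbb{Z}$ and $q\widetilde q=(x-\lambda_1)(x+\lambda_1)=x^2-c$ with $c=\lambda_1^2\ge 4$; if $\deg q=2$ with no linear term then $q=x^2-c$ is itself mirror-invariant, already accounting for both $\lambda_1$ and $\lambda_n$, again with $c=\lambda_1^2\ge 4$. Either way the trailing factor is $x^2-c$, giving form (I). If instead $\deg q=2$ and $q=x^2-ax+b$ has $a\ne 0$, then $\widetilde q=x^2+ax+b$ is a genuinely distinct factor and $q\widetilde q=(x^2-ax+b)(x^2+ax+b)$, giving form (II); here $a=\lambda_1+\overline{\lambda_1}>0$, because the conjugate $\overline{\lambda_1}$ is an eigenvalue distinct from both $\lambda_1$ and $\lambda_n$ (equality with $\lambda_n$ would force $a=0$) and hence lies in $(-2,2)$, so $a>2+(-2)=0$.

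The step I expect to carry the real weight is the discriminant condition in form (II): $\Delta=a^2-4b$ must be positive and square-free. Positivity and the non-square property are immediate from $q$ being irreducible with two distinct real roots, in the spirit of the argument in Lemma \ref{factor}. Upgrading ``not a perfect square'' to genuine square-freeness is the delicate endpoint, and this is where I expect the negative Pell equation to enter: once an admissible $\lambda_1$ is pinned to satisfy a relation whose discriminant matches $P(x,y)=y^2-4(x-2)$ for a Pell solution of $x^2-2y^2=-1$, Lemma \ref{lem-pell} (via Lemma \ref{lem-pell-1}) shows that discriminant is square-free. I would isolate this discriminant analysis as its own lemma rather than entangle it with the structural decomposition above.
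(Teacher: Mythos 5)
Your structural argument coincides with the paper's own proof of Theorem \ref{star-thm-1}: both isolate $\pm\lambda_1(T)$ using $\lambda_1(T)\ge2$ (Lemma \ref{starlike-rad}) and $\lambda_2(T)<2$ (Lemma \ref{starlike-eig-1}), observe that every irreducible factor other than the minimal polynomial of $\lambda_1(T)$ and its mirror has all its roots in $(-2,2)$ and hence lies in the list of Lemma \ref{factor}, pair $x\pm1$ and $x^2\pm x-1$ via the bipartite symmetry $f_T(-x)=(-1)^nf_T(x)$, and then case on the degree and the linear term of the minimal polynomial of $\lambda_1(T)$. Your derivation of $a>0$ from the fact that the conjugate $\overline{\lambda_1}$ is an eigenvalue lying in $(-2,2)$ is, if anything, more careful than the paper's, which simply normalizes $a\ge0$ and splits cases.

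The one genuine gap is the claim that $\Delta=a^2-4b$ is square-free. Your instinct that irreducibility only gives ``not a perfect square'' is correct, but your proposed repair fails: Lemma \ref{lem-pell} concerns the specific quantity $y^2-4(x-2)$ evaluated at solutions of the negative Pell equation $x^2-2y^2=-1$, and that relation only materializes much later (Remark \ref{re-1}) as the restriction $2a^2=(b+2)^2+1$ attached to the single family $T_{0,0,1,0,n_5}$; at the level of Theorem \ref{star-thm-1} there is no relation between $a$ and $b$ for the lemma to act on. Worse, square-freeness is not provable because it is false: the starlike tree $T_{1,4}$ (one pendant $P_1$ and four pendant copies of $P_2$) has, by Lemma \ref{starlike-poly}, $f_{T_{1,4}}(x)=(x^2-1)^3(x^2-2x-1)(x^2+2x-1)$, which is of form (II) with $a=2$, $b=-1$ and $\Delta=8=2^3$, not square-free. (The paper itself asserts square-freeness ``due to $p(x)$ is irreducible'', which likewise only yields that $\Delta$ is not a perfect square.) The defensible conclusion of your case analysis is $\Delta>0$ and $\Delta$ not a perfect square; you should prove exactly that and drop the square-free claim rather than try to import the Pell machinery.
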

\begin{proof}
Let $\lambda$ be any eigenvalue of $T$. Then $\lambda$ is a root of some irreducible factor $p(x)$ of $f_T(x)$. Since $T$ is quadratic, the degree of $p(x)$ is at most two.

First suppose that $|\lambda|=\lambda_1(T)$. If $\partial(p(x))=1$ then $p(x)=(x-\lambda)$. Also $f_T(x)$ has a factor $(x+\lambda)$ because  $T$ is bipartite, and  in this case, $f_T(x)$ is of the form (I). If $\partial(p(x))=2$, we may assume that $p(x)=x^2-ax+b$ contains $\lambda_1(T)=\frac{a+\sqrt{a^2-4b}}{2}$ as its root, where $a\ge 0$  and $\Delta=a^2-4b>0$ is a square-free number due to $p(x)$ is irreducible. If $a=0$ then $p(x)=x^2+b=x^2-c$, where $c=-b>0$, and so  $f_T(x)$ is of the form (I). Otherwise, $\frac{a\pm \sqrt{a^2-4b}}{2}$ are two roots of $p(x)$,  where $a>0$. Notice that $-\frac{a\pm \sqrt{a^2-4b}}{2}$ are also eigenvalues of $T$, and they are obviously the two roots of irreducible   $x^2+ax+b$. Since $-\frac{a+ \sqrt{a^2-4b}}{2}$ and the spectral radius $\frac{a+ \sqrt{a^2-4b}}{2}$ are simple, we claim that the irreducible factors $x^2-ax+b, x^2+ax+b|f_T(x)$ and their multiplicities in $f_T(x)$ are  the same and equal  to $1$. Thus $f_T(x)$ is of the form (II) in this case.

Next assume that two roots  $|\lambda|,|\bar{\lambda}|<\lambda_1(T)$, where $\bar{\lambda}$ is the conjugate of $\lambda$ and $\lambda=\bar{\lambda}$ if $\partial(p(x))=1$. Then $|\lambda|,|\bar{\lambda}|\le\lambda_2(T)<2$ by Lemma \ref{starlike-eig-1}. If $\partial(p(x))=1$ then $p(x)\in\{x,x\pm 1\}$ by Lemma \ref{factor}. If $\partial(p(x))=2$ then  we have $p(x)\in \{x^2-2,\ x^2-3,\ x^2\pm x-1\}$ by Lemma \ref{factor}. Additionally, we mention that if $p(x)=x^2+ x-1$ then $f_T(x)$ also contains $x^2-x-1$ because their roots happen to be the opposite number, and vice versa. It implies that  $(x^2+x-1)^t\parallel f_T(x)$ if and only if  $(x^2-x-1)^t\parallel f_T(x)$ (i.e., $(x^2-x-1)^t \mid f_T(x)$, but $(x^2-x-1)^{t+1} \nmid f_T(x)$ ).

We complete this proof.
\end{proof}

\begin{remark}
In the express of $f_T(x)$ in Theorem \ref{star-thm-1}, we mention that the factor will disappear if the corresponding power $t_i=0$. Note that each factor of  $x$, $x^2-1$, $x^2-2$, $(x^2-x-1)(x^2+x-1)$ and $x^2-3$  contains the opposite numbers as their roots, i.e., the roots of each factor are opposite closed, we call them the \emph{basis factors} of $f_T(x)$ and collect them in the set $S=\{x, x^2-1, x^2-2, (x^2-x+1)(x^2+x+1), x^2-3\}$. In fact, the product of the irreducible factors of  $\prod_{i=1}^{5} f_{P_i}(x)$ is just $\prod_{p(x)\in S}p(x)= x(x^2-1)(x^2-2)(x^2-x-1)(x^2+x-1)(x^2-3)$.
\end{remark}

It immediately follows the following result from Theorem \ref{star-thm-1}.

\begin{cor}\label{starlike-the-third-largest-1}
Let $T \supset K_{1,3} $ be a quadratic starlike tree and $\Lambda(T)$ be the set of possible  eigenvalues of $T$. We have
$$\Lambda(T) = \left\{\begin{array}{ll}
\{0,\pm1,\pm\sqrt{2}, \pm\frac{1\pm\sqrt{5}}{2},\pm \sqrt{3}, \pm\sqrt{c}\},& \mbox{ if $f_T(x)$ is of the form (I);}\\
\{0,\pm1,\pm\sqrt{2}, \pm\frac{1\pm\sqrt{5}}{2},\pm \sqrt{3},\pm\frac{a\pm \sqrt{a^2-4b}}{2}\},& \mbox{ if $f_T(x)$ is of the form (II),}
\end{array}\right.
$$
where integers $c\geq4$, $a>0$ and $a^2-4b>0$ is  a square-free number.
\end{cor}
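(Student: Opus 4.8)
The plan is to read the result directly off Theorem \ref{star-thm-1}. Since the eigenvalues of $T$ are exactly the roots of $f_T(x)$, and the root set of a product of polynomials is the union of the root sets of its factors, the set $\Lambda(T)$ of all values that can occur as eigenvalues is obtained simply by taking the union of the roots of every irreducible factor permitted to appear in form (I) or in form (II). So the proof amounts to enumerating the roots of each factor and collecting them.

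First I would dispose of the five basis factors, which are common to both forms. A routine computation gives the roots: $x$ contributes $0$; $x^2-1$ contributes $\pm 1$; $x^2-2$ contributes $\pm\sqrt{2}$; $x^2-3$ contributes $\pm\sqrt{3}$; and the pair $(x^2-x-1)(x^2+x-1)$ contributes the four numbers $\frac{1\pm\sqrt{5}}{2}$ together with $\frac{-1\pm\sqrt{5}}{2}$, which are precisely $\pm\frac{1\pm\sqrt{5}}{2}$. Collecting these yields the common part $\{0,\pm1,\pm\sqrt{2},\pm\frac{1\pm\sqrt{5}}{2},\pm\sqrt{3}\}$ appearing in both cases of the statement.

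Second I would treat the leading factor that distinguishes the two forms. If $f_T(x)$ is of form (I), the remaining factor $x^2-c$ has roots $\pm\sqrt{c}$, and adjoining these to the common part produces the first set. If $f_T(x)$ is of form (II), the two factors $x^2-ax+b$ and $x^2+ax+b$ have roots $\frac{a\pm\sqrt{a^2-4b}}{2}$ and $\frac{-a\pm\sqrt{a^2-4b}}{2}$ respectively, whose union is $\pm\frac{a\pm\sqrt{a^2-4b}}{2}$; adjoining these produces the second set. In either case the hypotheses $c\geq 4$, $a>0$ and $a^2-4b>0$ square-free are exactly those recorded in Theorem \ref{star-thm-1}, so they carry over verbatim.

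There is no genuine obstacle here; the only point requiring a moment's care is the bookkeeping for the sign-symmetric pairs, namely checking that $(x^2-x-1)(x^2+x-1)$ and the pair $x^2-ax+b,\,x^2+ax+b$ each contribute a root set closed under negation, so that the four roots in each case collapse to the compact notation $\pm\frac{1\pm\sqrt{5}}{2}$ and $\pm\frac{a\pm\sqrt{a^2-4b}}{2}$. I would also stress that $\Lambda(T)$ is a set of \emph{possible} eigenvalues: whenever some exponent $t_i$ vanishes the associated values simply fail to occur, so $\Lambda(T)$ functions as an upper bound for the actual spectrum rather than an exact description of it.
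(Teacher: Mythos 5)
Your proposal is correct and coincides with the paper's treatment: the paper states that the corollary ``immediately follows'' from Theorem \ref{star-thm-1} and gives no written proof, and your enumeration of the roots of each permitted irreducible factor (the five basis factors plus $x^2-c$ or the pair $x^2\mp ax+b$) is exactly the routine verification being omitted. Your closing remark that $\Lambda(T)$ is only an upper bound on the actual spectrum, since factors with $t_i=0$ do not occur, is also the correct reading of ``possible eigenvalues.''
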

\begin{cor}\label{starlike-the-third-largest}
Let $T \supset K_{1,3} $ be a quadratic starlike tree. Then \\
(i) if $f_T(x)$ is  of the form (I) then  $\lambda_2(T)\leq \sqrt{3}$; if $f_T(x)$ is  of the form (II) then $\lambda_3(T)\leq \sqrt{3}$.\\
(ii) $d(T)\leq 14$.
\end{cor}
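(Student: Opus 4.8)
The plan is to deduce both parts directly from the list of possible eigenvalues recorded in Corollary \ref{starlike-the-third-largest-1}, combined with Lemma \ref{starlike-rad} (which forces $\lambda_1(T)\ge 2$) for part (i) and Lemma \ref{diam} (the diameter bound) for part (ii). Before anything else I would record the single numerical comparison on which part (i) turns: $\frac{1+\sqrt5}{2}=1.618\ldots<\sqrt3=1.732\ldots$, so that among the basis eigenvalues $\{0,\pm1,\pm\sqrt2,\pm\frac{1\pm\sqrt5}{2},\pm\sqrt3\}$ the largest positive value is precisely $\sqrt3$.

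For part (i), form (I), the extra factor is $x^2-c$ with $c\ge4$, whose only positive root is the simple eigenvalue $\lambda_1(T)=\sqrt c\ge2$. Every remaining eigenvalue is a basis eigenvalue, hence at most $\sqrt3<2$, so $\lambda_2(T)\le\sqrt3$. For form (II), the extra factors $x^2-ax+b$ and $x^2+ax+b$ contribute the four simple eigenvalues $\pm\tfrac{a+\sqrt\Delta}{2}$ and $\pm\tfrac{a-\sqrt\Delta}{2}$, of which exactly two are positive, namely $\lambda_1(T)=\tfrac{a+\sqrt\Delta}{2}\ge2$ and its companion $\mu=\bigl|\tfrac{a-\sqrt\Delta}{2}\bigr|$. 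Here $\mu\neq0$, since $\mu=0$ would force $b=0$ and make $x^2-ax+b$ reducible, contradicting that $\Delta$ is square-free. Consequently the only eigenvalues that can exceed $\sqrt3$ are $\lambda_1(T)$ and possibly $\mu$; at most two eigenvalues lie above $\sqrt3$, and therefore $\lambda_3(T)\le\sqrt3$.

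For part (ii), I would invoke Lemma \ref{diam}, which reduces the task to bounding the number of \emph{distinct} eigenvalues of $T$. Tallying the two possibility sets of Corollary \ref{starlike-the-third-largest-1}, form (I) contributes at most $1+2+2+4+2+2=13$ distinct values (the four numbers $\pm\frac{1\pm\sqrt5}{2}$ being pairwise distinct), while form (II) contributes at most $1+2+2+4+2+4=15$. Any genuine coincidence between an extra root and a basis eigenvalue only lowers these counts, so $T$ has at most $15$ distinct eigenvalues, and Lemma \ref{diam} yields $d(T)\le 15-1=14$.

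The only delicate point, and the one I would flag explicitly, is the form-(II) case of part (i): one must notice that the companion root $\mu$ may legitimately fall in the interval $(\sqrt3,2)$, which is exactly why the statement bounds the \emph{third}-largest rather than the second-largest eigenvalue, and that irreducibility (equivalently, $\Delta$ square-free) excludes $\Delta=a^2$ and hence keeps $\mu$ away from $0$. Everything else is routine bookkeeping: confirming $\frac{1+\sqrt5}{2}<\sqrt3$ and counting the distinct members of the two eigenvalue sets.
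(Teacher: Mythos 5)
Your proposal is correct and takes essentially the same route as the paper: part (i) is read off from the two forms of $f_T(x)$ (equivalently, from the eigenvalue list in Corollary \ref{starlike-the-third-largest-1} together with $\frac{1+\sqrt{5}}{2}<\sqrt{3}$), and part (ii) follows by counting at most $15$ distinct eigenvalues and applying Lemma \ref{diam}. You merely make explicit the bookkeeping (the simplicity of the roots of the extra factors, and the distinctness counts) that the paper's one-line proof leaves implicit.
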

\begin{proof}
(i) immediately  follows  from the forms (\uppercase\expandafter{\romannumeral1}) and (\uppercase\expandafter{\romannumeral2}) of $f_T(T)$.

From  Corollary \ref{starlike-the-third-largest-1}, we see that  the number of distinct eigenvalues of $T$ is at most $15$. Hence  $d(T)\leq 14$ by Lemma \ref{diam} and (ii) follows.
\end{proof}

\section{Characterization of quadratic starlike trees}

\begin{lem}\label{starlike-two-path}
Let $T \supset K_{1,3} $ be a quadratic starlike tree with the center vertex $u$. Then $T-u$  does not contain two  copies of path $P_i$ such that  $i\ge6$.
\end{lem}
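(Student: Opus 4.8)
The plan is to argue by contradiction. Assume $T-u$ contains two copies of $P_i$ with $i\ge 6$, and deduce that $T$ has an eigenvalue of algebraic degree exceeding two, contradicting the hypothesis that $T$ is quadratic. The engine of the argument is the elementary but crucial fact that a pendant branch appearing \emph{twice} at the center contributes all of its own eigenvalues to $T$; combined with Theorem \ref{thm-path-cyc-1}, which tells us that $P_i$ is not quadratic once $i\ge 6$, this closes the proof.

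First I would set up the symmetry explicitly. Label the vertices of the two copies of $P_i$ as $v_1,\ldots,v_i$ and $w_1,\ldots,w_i$, where $v_1$ and $w_1$ are the two neighbours of $u$ lying on these paths, and $v_j$ (resp. $w_j$) is adjacent to $v_{j+1}$ (resp. $w_{j+1}$). Let $x=(x_1,\ldots,x_i)^{\top}$ be an eigenvector of $A(P_i)$ with $A(P_i)x=\mu x$. Define a vector $y$ on $V(T)$ by $y_{v_j}=x_j$ and $y_{w_j}=-x_j$ for $1\le j\le i$, and $y_z=0$ for every other vertex $z$ (in particular $y_u=0$). I would then verify directly that $A(T)y=\mu y$: at the interior vertices and the far endpoints of the two copies the equation is exactly the eigenvector equation for $P_i$; at $v_1$ the contribution of $u$ vanishes because $y_u=0$, so the relation reduces to the pendant-end equation $x_2=\mu x_1$ of $P_i$ (and symmetrically at $w_1$); and at $u$ itself the two branch contributions cancel, $y_{v_1}+y_{w_1}=x_1-x_1=0=\mu y_u$. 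Hence every eigenvalue $\mu$ of $P_i$ is an eigenvalue of $T$. Stated invariantly, the antisymmetric vectors (those with $y_{w_j}=-y_{v_j}$, $y_u=0$, and zero off the two copies) span an $i$-dimensional $A(T)$-invariant subspace on which $A(T)$ acts as $A(P_i)$, so $f_{P_i}(x)\mid f_T(x)$.

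Finally I would invoke Theorem \ref{thm-path-cyc-1}: since $i\ge 6$, the path $P_i$ is not quadratic, so $f_{P_i}(x)$ has an irreducible factor of degree at least three. By the divisibility just established, this factor also divides $f_T(x)$, whence $T$ has an eigenvalue of algebraic degree at least three. This contradicts the assumption that $T$ is quadratic, completing the argument.

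The step requiring the most care is the verification that $y$ is a genuine eigenvector at the branch vertices $v_1,w_1$ and at the center $u$: this is precisely where the hypothesis of \emph{two} identical copies is used, since it is the cancellation $y_{v_1}+y_{w_1}=0$ at $u$ that makes $y$ an eigenvector, and one must also use that each $P_i$ is attached at its pendant vertex so that the path recurrence at $v_1$ matches the pendant-end equation of $P_i$. I note that the standing assumption $T\supset K_{1,3}$ (equivalently $d_T(u)\ge 3$) plays no essential role here; the other branches at $u$ carry $y\equiv 0$ and contribute nothing, so the argument applies verbatim to any tree possessing two equal pendant-path branches of length $i\ge 6$.
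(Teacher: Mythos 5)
Your proof is correct, and it follows the same overall strategy as the paper's: assume $T-u$ contains two copies of $P_i$ with $i\ge 6$, note via Theorem \ref{thm-path-cyc-1} that $P_i$ then has an eigenvalue $\lambda$ of algebraic degree greater than two, and show that $\lambda$ survives as an eigenvalue of $T$, contradicting quadraticity. Where you differ is in the mechanism for that transfer step. The paper observes that $m(T-u;\lambda)\ge 2$ because each copy of $P_i$ contributes $\lambda$, and then invokes the Interlacing Theorem (Lemma \ref{interlace}) for the one-vertex-deleted subgraph $T-u$: an eigenvalue of multiplicity at least two in $T-u$ is sandwiched between consecutive eigenvalues of $T$ and hence persists in $T$. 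You instead construct the eigenvector of $T$ explicitly, antisymmetric across the two copies and zero elsewhere, with the cancellation at $u$ and the vanishing of $y_u$ at $v_1,w_1$ doing the work; your verification at those vertices is exactly where care is needed, and it checks out. Your route is more elementary (it bypasses interlacing entirely) and yields the stronger conclusion $f_{P_i}(x)\mid f_T(x)$, valid for any tree with two identical pendant-path branches at a common vertex, while the paper's version is shorter because the interlacing lemma is already available. Either argument closes the proof.
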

\begin{proof}
Suppose to the contrary that $T-u$  contains two $P_i$ where $i\ge6$. By Theorem \ref{thm-path-cyc-1},  $P_i$ has an  eigenvalue $\lambda$ whose  degree  is greater than $2$. Additionally, the multiplicity $m(T-u;\lambda)\ge2$. Thus $\lambda$ is an eigenvalue of $T$ by Lemma \ref{interlace}. However $T$ is quadratic, this is impossible.
\end{proof}

\begin{lem}\label{starlike-two-path-distinct}
Let $T \supset K_{1,3} $ be a quadratic starlike tree with the center vertex $u$. Then $T-u$  can not contain two  paths $P_i$ and $P_j$ such that $i>j\ge 6$.
\end{lem}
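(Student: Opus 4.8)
The plan is to combine the diameter bound with interlacing to cut the problem down to a very short finite list of trees, and then to read off a forbidden high-degree eigenvalue from the characteristic polynomial of each. Suppose to the contrary that $T-u$ contains $P_i$ and $P_j$ with $i>j\ge 6$. First I would observe that the two longest legs of $T$, say of lengths $a\ge b$, both have length $\ge 6$; by Lemma~\ref{starlike-two-path} (no repeated leg of length $\ge 6$) they are distinct, so $a>b\ge 6$. Since the diameter of a starlike tree equals the sum of its two longest legs, Corollary~\ref{starlike-the-third-largest}(ii) gives $a+b=d(T)\le 14$, whence $(a,b)\in\{(7,6),(8,6)\}$. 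As any pair $P_i,P_j$ with $i>j\ge 6$ must consist of legs of length $\ge 6$, it follows that $(i,j)=(a,b)$ and that every remaining leg of $T$ has length $\le 5$.

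Next I would bound the number of short legs. Because $T\supset K_{1,3}$ we have $d_T(u)\ge 3$, so at least one short leg is present. Selecting the two long legs together with a single vertex from each of $s$ short legs produces, as an induced subgraph of $T$, the spider $S(i,j,1^{s})$. Here $\lambda_1(P_j)=2\cos\frac{\pi}{j+1}>\sqrt3$ already forces $\lambda_2>\sqrt3$, and a root count for the secular equation $2\cos\theta=\frac{\sin i\theta}{\sin(i+1)\theta}+\frac{\sin j\theta}{\sin(j+1)\theta}+\frac{s}{2\cos\theta}$ shows that $S(i,j,1^{3})$ has three eigenvalues exceeding $\sqrt3$, i.e. $\lambda_3(S(i,j,1^{3}))>\sqrt3$. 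By interlacing (Lemma~\ref{interlace}) this would force $\lambda_3(T)>\sqrt3$, contradicting Corollary~\ref{starlike-the-third-largest}(i). Hence $s\le 2$, and since each short leg has length in $\{1,\dots,5\}$ there remain only finitely many candidate trees.

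Finally I would dispose of each candidate by explicit computation. For a given $T$, Lemma~\ref{starlike-poly} yields $f_T(x)=x^{t}\,g(x^2)$ for an integer polynomial $g$; quadraticity of $T$ forces every nonzero eigenvalue to be a root of some factor in $\{x^2-1,\ x^2-2,\ x^2-3,\ x^2\pm x-1\}$ or to be one of the at most two ``large'' pairs $\pm\lambda_1,\pm\lambda_2$, so all but at most two roots of $g$ must lie in the admissible square set $\{1,2,3\}\cup\{\text{roots of }y^2-3y+1\}$. The smallest case $S(7,6,1)$ is representative: there $g(y)=y^7-14y^6+77y^5-211y^4+301y^3-212y^2+62y-5$ has degree $7$, no rational root, and is not divisible by $y^2-3y+1$, so at least five of its roots yield eigenvalues of degree $>2$, contradicting that $T$ is quadratic. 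Carrying this out for the remaining candidates finishes the proof.

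The main obstacle is precisely this last, unavoidably computational, step, and the reason a cheaper argument is not available. The naive strategy of locating a degree-$\ge 3$ eigenvalue inherited from $P_6$ or $P_7$ fails: reducing $f_T$ modulo the minimal polynomial $q$ of, say, $2\cos\frac{\pi}{8}$ shows that $q\nmid f_T$, so that value is never an eigenvalue of $T$; the coupling at $u$ instead manufactures \emph{new} high-degree eigenvalues, detectable only from the global factor $g$. Thus the crux is twofold: (a) proving the clean inequality $\lambda_3(S(i,j,1^{3}))>\sqrt3$ that caps the number of short legs, and (b) verifying for each of the finitely many candidates that the associated $g$ has at least three roots outside the admissible square set.
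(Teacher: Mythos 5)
Your overall strategy coincides with the paper's: assume $P_i,P_j\subset T-u$ with $i>j\ge 6$, use the diameter bound $d(T)\le 14$ from Corollary \ref{starlike-the-third-largest}(ii) to force $(i,j)\in\{(7,6),(8,6)\}$, use interlacing against the bound $\lambda_3(T)\le\sqrt3$ of Corollary \ref{starlike-the-third-largest}(i) to cap the degree of the center, and finish with a finite computational check. Your one worked example is correct: I verified that $f_{S(7,6,1)}(x)=x\,g(x^2)$ with $g(y)=y^7-14y^6+77y^5-211y^4+301y^3-212y^2+62y-5$, that $g(1)=g(2)=-1$, $g(3)=1$, and that $g\equiv -y+1\pmod{y^2-3y+1}$, so indeed no root of $g$ lies in the admissible square set and $S(7,6,1)$ cannot be quadratic; your ``all but at most two roots of $g$'' criterion is a clean, systematic way to run the final check, arguably tidier than factoring each characteristic polynomial.

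The weakness is that your reduction stops too early. Forbidding only $S(i,j,1^3)$ caps the number of short legs at two, but each short leg may still have length $1$ through $5$, leaving roughly forty candidate trees (twenty for each of $(7,6)$ and $(8,6)$), of which you verify exactly one and dismiss the rest with ``carrying this out for the remaining candidates finishes the proof.'' The paper prunes much harder before computing anything: it also shows $\lambda_3>\sqrt3$ for $S(7,6,4)$, $S(7,6,2,1)$ and $S(8,6,3)$ (its $H_1$, $H_2$, $H_8$), which bounds the lengths of the extra legs and not just their number, reducing the list to six explicit trees ($H_4$--$H_7$, $H_{10}$, $H_{11}$) that are then each checked. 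Your argument is correct in principle and would close with the same kind of verification you performed for $S(7,6,1)$, but as written the bulk of the finite check is a promissory note; either adopt the extra forbidden subgraphs to shrink the list, or actually exhibit the computation (or at least the values $g(1),g(2),g(3)$ and $g\bmod(y^2-3y+1)$) for every remaining candidate.
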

\begin{proof}
Suppose to the contrary that $T-u$  contains $P_i$ and $P_j$ for some $i>j \geq 6$. By Corollary \ref{starlike-the-third-largest}(2), we have $d(T)\leq 14$. It implies that  $i=7, j=6$ or $i=8, j=6$.

First we assume that  $T-u$  contains $P_7$ and $P_6$. Let $H_i$ be the starlike tree described in Fig. \ref{fig-case1-two-path} for $i=1,2,...,11$.  By direct calculation, we find that $\lambda_3(H_1),\lambda_3(H_2),\lambda_3(H_3)>\sqrt{3}$. Thus, by Lemma \ref{interlace} and Corollary \ref{starlike-the-third-largest}(1), any one of $H_1, H_2$ and $H_3$ can not become the induced subgraph of   $T$. Moreover, it deduces that $3\le d_T(u)\le 4$ because $H_3\not\subset T$. Thus,   if $d_T(u)=3$ then $T=H_4, H_5$ or $H_6$ since $H_1\not\subset T$; if $d_T(u)=4$ then $T=H_7$ since $H_2\not\subset T$. However, by direct calculation, we know that  any one of $H_4, H_5, H_6$ and $H_7$ is not quadratic. This is a contradiction.

Next we assume that $T-u$  contains $P_8$ and $P_6$.  As similar as above, $T-u$ can not contain $H_8$ and $H_9$ as its subgraph since,   by direct calculation, we have $\lambda_3(H_8),\lambda_3(H_9)>\sqrt{3}$.  Since $H_{9}\not\subset T$, we have $d_T(u)=3$. It implies that  $T=H_{10}$ or $ H_{11}$ because of  $H_{8}\not\subset T$.  However, one can directly verify that  both $H_{10}$ and $ H_{11}$ are not quadratic. It leads to a contradiction.

We complete this proof.
\end{proof}
\begin{figure}[h]
\unitlength 0.13mm 
\scriptsize
\linethickness{0.4pt}
\ifx\plotpoint\undefined\newsavebox{\plotpoint}\fi 
\begin{center}
\begin{picture}(1047.6,165.3)
\put(28.3,162.4){\circle*{4}}
\put(0.0,134.1){\circle*{4}}
\qbezier(28.3,162.4)(14.1,148.3)(0.0,134.1)
\put(0.0,114.6){\circle*{4}}
\qbezier(0.0,134.1)(0.0,124.3)(0.0,114.6)
\put(0.0,94.3){\circle*{4}}
\qbezier(0.0,114.6)(0.0,104.4)(0.0,94.3)
\put(0.0,76.9){\circle*{4}}
\qbezier(0.0,94.3)(0.0,85.6)(0.0,76.9)
\put(28.3,134.9){\circle*{4}}
\qbezier(28.3,162.4)(28.3,148.6)(28.3,134.9)
\put(28.3,115.3){\circle*{4}}
\qbezier(28.3,134.9)(28.3,125.1)(28.3,115.3)
\put(28.3,95.0){\circle*{4}}
\qbezier(28.3,115.3)(28.3,105.1)(28.3,95.0)
\put(28.3,78.3){\circle*{4}}
\qbezier(28.3,95.0)(28.3,86.6)(28.3,78.3)
\put(28.3,60.2){\circle*{4}}
\qbezier(28.3,78.3)(28.3,69.2)(28.3,60.2)
\put(28.3,39.2){\circle*{4}}
\qbezier(28.3,60.2)(28.3,49.7)(28.3,39.2)
\put(55.8,134.9){\circle*{4}}
\qbezier(28.3,162.4)(42.1,148.6)(55.8,134.9)
\put(55.8,114.6){\circle*{4}}
\qbezier(55.8,134.9)(55.8,124.7)(55.8,114.6)
\put(55.8,94.3){\circle*{4}}
\qbezier(55.8,114.6)(55.8,104.4)(55.8,94.3)
\put(55.8,77.6){\circle*{4}}
\qbezier(55.8,94.3)(55.8,85.9)(55.8,77.6)
\put(55.8,58.7){\circle*{4}}
\qbezier(55.8,77.6)(55.8,68.2)(55.8,58.7)
\put(55.8,39.2){\circle*{4}}
\qbezier(55.8,58.7)(55.8,48.9)(55.8,39.2)
\put(55.8,21.0){\circle*{4}}
\qbezier(55.8,39.2)(55.8,30.1)(55.8,21.0)
\put(124.7,161.7){\circle*{4}}
\put(94.3,131.2){\circle*{4}}
\qbezier(124.7,161.7)(109.5,146.5)(94.3,131.2)
\put(109.5,132.0){\circle*{4}}
\qbezier(124.7,161.7)(117.1,146.8)(109.5,132.0)
\put(134.1,132.7){\circle*{4}}
\qbezier(124.7,161.7)(129.4,147.2)(134.1,132.7)
\put(155.2,131.2){\circle*{4}}
\qbezier(124.7,161.7)(139.9,146.5)(155.2,131.2)
\put(109.5,111.7){\circle*{4}}
\qbezier(109.5,132.0)(109.5,121.8)(109.5,111.7)
\put(134.1,112.4){\circle*{4}}
\qbezier(134.1,132.7)(134.1,122.5)(134.1,112.4)
\put(134.1,94.3){\circle*{4}}
\qbezier(134.1,112.4)(134.1,103.3)(134.1,94.3)
\put(134.1,74.7){\circle*{4}}
\qbezier(134.1,94.3)(134.1,84.5)(134.1,74.7)
\put(134.1,57.3){\circle*{4}}
\qbezier(134.1,74.7)(134.1,66.0)(134.1,57.3)
\put(134.1,37.7){\circle*{4}}
\qbezier(134.1,57.3)(134.1,47.5)(134.1,37.7)
\put(155.2,112.4){\circle*{4}}
\qbezier(155.2,131.2)(155.2,121.8)(155.2,112.4)
\put(155.2,94.3){\circle*{4}}
\qbezier(155.2,112.4)(155.2,103.3)(155.2,94.3)
\put(155.2,74.7){\circle*{4}}
\qbezier(155.2,94.3)(155.2,84.5)(155.2,74.7)
\put(155.2,57.3){\circle*{4}}
\qbezier(155.2,74.7)(155.2,66.0)(155.2,57.3)
\put(155.2,38.4){\circle*{4}}
\qbezier(155.2,57.3)(155.2,47.9)(155.2,38.4)
\put(155.2,18.9){\circle*{4}}
\qbezier(155.2,38.4)(155.2,28.6)(155.2,18.9)
\put(236.4,163.9){\circle*{4}}
\put(195.8,134.1){\circle*{4}}
\qbezier(236.4,163.9)(216.1,149.0)(195.8,134.1)
\put(212.4,133.4){\circle*{4}}
\qbezier(236.4,163.9)(224.4,148.6)(212.4,133.4)
\put(226.2,133.4){\circle*{4}}
\qbezier(236.4,163.9)(231.3,148.6)(226.2,133.4)
\put(245.8,133.4){\circle*{4}}
\qbezier(236.4,163.9)(241.1,148.6)(245.8,133.4)
\put(245.8,115.3){\circle*{4}}
\qbezier(245.8,133.4)(245.8,124.3)(245.8,115.3)
\put(245.8,96.4){\circle*{4}}
\qbezier(245.8,115.3)(245.8,105.9)(245.8,96.4)
\put(245.8,76.1){\circle*{4}}
\qbezier(245.8,96.4)(245.8,86.3)(245.8,76.1)
\put(245.8,57.3){\circle*{4}}
\qbezier(245.8,76.1)(245.8,66.7)(245.8,57.3)
\put(245.8,39.2){\circle*{4}}
\qbezier(245.8,57.3)(245.8,48.2)(245.8,39.2)
\put(266.1,134.1){\circle*{4}}
\qbezier(236.4,163.9)(251.2,149.0)(266.1,134.1)
\put(266.1,114.6){\circle*{4}}
\qbezier(266.1,134.1)(266.1,124.3)(266.1,114.6)
\put(266.1,96.4){\circle*{4}}
\qbezier(266.1,114.6)(266.1,105.5)(266.1,96.4)
\put(266.1,76.1){\circle*{4}}
\qbezier(266.1,96.4)(266.1,86.3)(266.1,76.1)
\put(266.1,56.6){\circle*{4}}
\qbezier(266.1,76.1)(266.1,66.3)(266.1,56.6)
\put(266.1,39.2){\circle*{4}}
\qbezier(266.1,56.6)(266.1,47.9)(266.1,39.2)
\put(266.1,19.6){\circle*{4}}
\qbezier(266.1,39.2)(266.1,29.4)(266.1,19.6)
\put(338.6,163.1){\circle*{4}}
\put(308.9,133.4){\circle*{4}}
\qbezier(338.6,163.1)(323.7,148.3)(308.9,133.4)
\put(338.6,134.1){\circle*{4}}
\qbezier(338.6,163.1)(338.6,148.6)(338.6,134.1)
\put(338.6,114.6){\circle*{4}}
\qbezier(338.6,134.1)(338.6,124.3)(338.6,114.6)
\put(338.6,95.7){\circle*{4}}
\qbezier(338.6,114.6)(338.6,105.1)(338.6,95.7)
\put(338.6,76.1){\circle*{4}}
\qbezier(338.6,95.7)(338.6,85.9)(338.6,76.1)
\put(338.6,58.0){\circle*{4}}
\qbezier(338.6,76.1)(338.6,67.1)(338.6,58.0)
\put(338.6,37.7){\circle*{4}}
\qbezier(338.6,58.0)(338.6,47.9)(338.6,37.7)
\put(367.6,134.1){\circle*{4}}
\qbezier(338.6,163.1)(353.1,148.6)(367.6,134.1)
\put(367.6,114.6){\circle*{4}}
\qbezier(367.6,134.1)(367.6,124.3)(367.6,114.6)
\put(367.6,95.7){\circle*{4}}
\qbezier(367.6,114.6)(367.6,105.1)(367.6,95.7)
\put(367.6,76.9){\circle*{4}}
\qbezier(367.6,95.7)(367.6,86.3)(367.6,76.9)
\put(367.6,58.0){\circle*{4}}
\qbezier(367.6,76.9)(367.6,67.4)(367.6,58.0)
\put(367.6,37.7){\circle*{4}}
\qbezier(367.6,58.0)(367.6,47.9)(367.6,37.7)
\put(367.6,20.3){\circle*{4}}
\qbezier(367.6,37.7)(367.6,29.0)(367.6,20.3)
\put(445.9,163.9){\circle*{4}}
\put(416.9,134.9){\circle*{4}}
\qbezier(445.9,163.9)(431.4,149.4)(416.9,134.9)
\put(445.9,134.9){\circle*{4}}
\qbezier(445.9,163.9)(445.9,149.4)(445.9,134.9)
\put(473.4,135.6){\circle*{4}}
\qbezier(445.9,163.9)(459.7,149.7)(473.4,135.6)
\put(445.9,116.0){\circle*{4}}
\qbezier(445.9,134.9)(445.9,125.4)(445.9,116.0)
\put(445.9,95.7){\circle*{4}}
\qbezier(445.9,116.0)(445.9,105.9)(445.9,95.7)
\put(445.9,76.9){\circle*{4}}
\qbezier(445.9,95.7)(445.9,86.3)(445.9,76.9)
\put(445.9,58.0){\circle*{4}}
\qbezier(445.9,76.9)(445.9,67.4)(445.9,58.0)
\put(445.9,37.0){\circle*{4}}
\qbezier(445.9,58.0)(445.9,47.5)(445.9,37.0)
\put(473.4,116.0){\circle*{4}}
\qbezier(473.4,135.6)(473.4,125.8)(473.4,116.0)
\put(473.4,95.7){\circle*{4}}
\qbezier(473.4,116.0)(473.4,105.9)(473.4,95.7)
\put(473.4,76.9){\circle*{4}}
\qbezier(473.4,95.7)(473.4,86.3)(473.4,76.9)
\put(473.4,58.0){\circle*{4}}
\qbezier(473.4,76.9)(473.4,67.4)(473.4,58.0)
\put(473.4,37.7){\circle*{4}}
\qbezier(473.4,58.0)(473.4,47.9)(473.4,37.7)
\put(473.4,19.6){\circle*{4}}
\qbezier(473.4,37.7)(473.4,28.6)(473.4,19.6)
\put(416.9,118.2){\circle*{4}}
\qbezier(416.9,134.9)(416.9,126.5)(416.9,118.2)
\put(540.9,163.9){\circle*{4}}
\put(511.1,134.1){\circle*{4}}
\qbezier(540.9,163.9)(526.0,149.0)(511.1,134.1)
\put(570.6,134.1){\circle*{4}}
\qbezier(540.9,163.9)(555.7,149.0)(570.6,134.1)
\put(540.9,134.9){\circle*{4}}
\qbezier(540.9,163.9)(540.9,149.4)(540.9,134.9)
\put(540.9,116.0){\circle*{4}}
\qbezier(540.9,134.9)(540.9,125.4)(540.9,116.0)
\put(540.9,95.0){\circle*{4}}
\qbezier(540.9,116.0)(540.9,105.5)(540.9,95.0)
\put(540.9,76.1){\circle*{4}}
\qbezier(540.9,95.0)(540.9,85.6)(540.9,76.1)
\put(540.9,57.3){\circle*{4}}
\qbezier(540.9,76.1)(540.9,66.7)(540.9,57.3)
\put(540.9,37.0){\circle*{4}}
\qbezier(540.9,57.3)(540.9,47.1)(540.9,37.0)
\put(570.6,116.0){\circle*{4}}
\qbezier(570.6,134.1)(570.6,125.1)(570.6,116.0)
\put(570.6,95.0){\circle*{4}}
\qbezier(570.6,116.0)(570.6,105.5)(570.6,95.0)
\put(570.6,75.4){\circle*{4}}
\qbezier(570.6,95.0)(570.6,85.2)(570.6,75.4)
\put(570.6,56.6){\circle*{4}}
\qbezier(570.6,75.4)(570.6,66.0)(570.6,56.6)
\put(570.6,36.3){\circle*{4}}
\qbezier(570.6,56.6)(570.6,46.4)(570.6,36.3)
\put(570.6,18.9){\circle*{4}}
\qbezier(570.6,36.3)(570.6,27.6)(570.6,18.9)
\put(511.1,115.3){\circle*{4}}
\qbezier(511.1,134.1)(511.1,124.7)(511.1,115.3)
\put(511.1,94.3){\circle*{4}}
\qbezier(511.1,115.3)(511.1,104.8)(511.1,94.3)
\put(634.4,163.9){\circle*{4}}
\put(605.4,134.9){\circle*{4}}
\qbezier(634.4,163.9)(619.9,149.4)(605.4,134.9)
\put(621.3,134.1){\circle*{4}}
\qbezier(634.4,163.9)(627.9,149.0)(621.3,134.1)
\put(646.0,133.4){\circle*{4}}
\qbezier(634.4,163.9)(640.2,148.6)(646.0,133.4)
\put(663.4,134.9){\circle*{4}}
\qbezier(634.4,163.9)(648.9,149.4)(663.4,134.9)
\put(646.0,113.8){\circle*{4}}
\qbezier(646.0,133.4)(646.0,123.6)(646.0,113.8)
\put(646.0,94.3){\circle*{4}}
\qbezier(646.0,113.8)(646.0,104.0)(646.0,94.3)
\put(646.0,76.1){\circle*{4}}
\qbezier(646.0,94.3)(646.0,85.2)(646.0,76.1)
\put(646.0,55.8){\circle*{4}}
\qbezier(646.0,76.1)(646.0,66.0)(646.0,55.8)
\put(646.0,36.3){\circle*{4}}
\qbezier(646.0,55.8)(646.0,46.0)(646.0,36.3)
\put(663.4,113.8){\circle*{4}}
\qbezier(663.4,134.9)(663.4,124.3)(663.4,113.8)
\put(663.4,94.3){\circle*{4}}
\qbezier(663.4,113.8)(663.4,104.0)(663.4,94.3)
\put(663.4,76.1){\circle*{4}}
\qbezier(663.4,94.3)(663.4,85.2)(663.4,76.1)
\put(663.4,55.8){\circle*{4}}
\qbezier(663.4,76.1)(663.4,66.0)(663.4,55.8)
\put(663.4,36.3){\circle*{4}}
\qbezier(663.4,55.8)(663.4,46.0)(663.4,36.3)
\put(663.4,18.1){\circle*{4}}
\qbezier(663.4,36.3)(663.4,27.2)(663.4,18.1)
\put(742.4,164.6){\circle*{4}}
\put(721.4,134.9){\circle*{4}}
\qbezier(742.4,164.6)(731.9,149.7)(721.4,134.9)
\put(742.4,134.9){\circle*{4}}
\qbezier(742.4,164.6)(742.4,149.7)(742.4,134.9)
\put(767.1,134.1){\circle*{4}}
\qbezier(742.4,164.6)(754.7,149.4)(767.1,134.1)
\put(721.4,115.3){\circle*{4}}
\qbezier(721.4,134.9)(721.4,125.1)(721.4,115.3)
\put(721.4,93.5){\circle*{4}}
\qbezier(721.4,115.3)(721.4,104.4)(721.4,93.5)
\put(742.4,115.3){\circle*{4}}
\qbezier(742.4,134.9)(742.4,125.1)(742.4,115.3)
\put(742.4,94.3){\circle*{4}}
\qbezier(742.4,115.3)(742.4,104.8)(742.4,94.3)
\put(742.4,76.9){\circle*{4}}
\qbezier(742.4,94.3)(742.4,85.6)(742.4,76.9)
\put(742.4,55.8){\circle*{4}}
\qbezier(742.4,76.9)(742.4,66.3)(742.4,55.8)
\put(742.4,37.0){\circle*{4}}
\qbezier(742.4,55.8)(742.4,46.4)(742.4,37.0)
\put(767.1,115.3){\circle*{4}}
\qbezier(767.1,134.1)(767.1,124.7)(767.1,115.3)
\put(767.1,95.0){\circle*{4}}
\qbezier(767.1,115.3)(767.1,105.1)(767.1,95.0)
\put(767.1,77.6){\circle*{4}}
\qbezier(767.1,95.0)(767.1,86.3)(767.1,77.6)
\put(767.1,55.8){\circle*{4}}
\qbezier(767.1,77.6)(767.1,66.7)(767.1,55.8)
\put(767.1,36.3){\circle*{4}}
\qbezier(767.1,55.8)(767.1,46.0)(767.1,36.3)
\put(767.1,18.9){\circle*{4}}
\qbezier(767.1,36.3)(767.1,27.6)(767.1,18.9)
\put(837.4,165.3){\circle*{4}}
\put(806.9,134.9){\circle*{4}}
\qbezier(837.4,165.3)(822.2,150.1)(806.9,134.9)
\put(821.4,134.9){\circle*{4}}
\qbezier(837.4,165.3)(829.4,150.1)(821.4,134.9)
\put(843.9,134.9){\circle*{4}}
\qbezier(837.4,165.3)(840.6,150.1)(843.9,134.9)
\put(861.3,134.9){\circle*{4}}
\qbezier(837.4,165.3)(849.3,150.1)(861.3,134.9)
\put(843.9,114.6){\circle*{4}}
\qbezier(843.9,134.9)(843.9,124.7)(843.9,114.6)
\put(843.9,96.4){\circle*{4}}
\qbezier(843.9,114.6)(843.9,105.5)(843.9,96.4)
\put(843.9,77.6){\circle*{4}}
\qbezier(843.9,96.4)(843.9,87.0)(843.9,77.6)
\put(843.9,60.2){\circle*{4}}
\qbezier(843.9,77.6)(843.9,68.9)(843.9,60.2)
\put(843.9,38.4){\circle*{4}}
\qbezier(843.9,60.2)(843.9,49.3)(843.9,38.4)
\put(861.3,114.6){\circle*{4}}
\qbezier(861.3,134.9)(861.3,124.7)(861.3,114.6)
\put(861.3,96.4){\circle*{4}}
\qbezier(861.3,114.6)(861.3,105.5)(861.3,96.4)
\put(861.3,77.6){\circle*{4}}
\qbezier(861.3,96.4)(861.3,87.0)(861.3,77.6)
\put(861.3,59.5){\circle*{4}}
\qbezier(861.3,77.6)(861.3,68.5)(861.3,59.5)
\put(861.3,38.4){\circle*{4}}
\qbezier(861.3,59.5)(861.3,48.9)(861.3,38.4)
\put(861.3,19.6){\circle*{4}}
\qbezier(861.3,38.4)(861.3,29.0)(861.3,19.6)
\put(767.1,1.5){\circle*{4}}
\qbezier(767.1,18.9)(767.1,10.2)(767.1,1.5)
\put(861.3,1.5){\circle*{4}}
\qbezier(861.3,19.6)(861.3,10.5)(861.3,1.5)
\put(924.4,164.6){\circle*{4}}
\put(902.6,134.9){\circle*{4}}
\qbezier(924.4,164.6)(913.5,149.7)(902.6,134.9)
\put(924.4,134.9){\circle*{4}}
\qbezier(924.4,164.6)(924.4,149.7)(924.4,134.9)
\put(924.4,115.3){\circle*{4}}
\qbezier(924.4,134.9)(924.4,125.1)(924.4,115.3)
\put(924.4,97.9){\circle*{4}}
\qbezier(924.4,115.3)(924.4,106.6)(924.4,97.9)
\put(924.4,78.3){\circle*{4}}
\qbezier(924.4,97.9)(924.4,88.1)(924.4,78.3)
\put(924.4,60.2){\circle*{4}}
\qbezier(924.4,78.3)(924.4,69.2)(924.4,60.2)
\put(924.4,38.4){\circle*{4}}
\qbezier(924.4,60.2)(924.4,49.3)(924.4,38.4)
\put(954.1,134.9){\circle*{4}}
\qbezier(924.4,164.6)(939.2,149.7)(954.1,134.9)
\put(954.1,113.8){\circle*{4}}
\qbezier(954.1,134.9)(954.1,124.3)(954.1,113.8)
\put(954.1,97.2){\circle*{4}}
\qbezier(954.1,113.8)(954.1,105.5)(954.1,97.2)
\qbezier(954.1,97.2)(953.7,90.6)(953.4,84.1)
\put(954.1,78.3){\circle*{4}}
\qbezier(954.1,97.2)(954.1,87.7)(954.1,78.3)
\put(954.1,59.5){\circle*{4}}
\qbezier(954.1,78.3)(954.1,68.9)(954.1,59.5)
\put(954.1,38.4){\circle*{4}}
\qbezier(954.1,59.5)(954.1,48.9)(954.1,38.4)
\put(954.1,19.6){\circle*{4}}
\qbezier(954.1,38.4)(954.1,29.0)(954.1,19.6)
\put(954.1,1.5){\circle*{4}}
\qbezier(954.1,19.6)(954.1,10.5)(954.1,1.5)
\put(1025.2,163.1){\circle*{4}}
\put(1002.7,132.0){\circle*{4}}
\qbezier(1025.2,163.1)(1013.9,147.5)(1002.7,132.0)
\put(1025.2,132.0){\circle*{4}}
\qbezier(1025.2,163.1)(1025.2,147.5)(1025.2,132.0)
\put(1025.2,113.8){\circle*{4}}
\qbezier(1025.2,132.0)(1025.2,122.9)(1025.2,113.8)
\put(1025.2,95.7){\circle*{4}}
\qbezier(1025.2,113.8)(1025.2,104.8)(1025.2,95.7)
\put(1025.2,76.1){\circle*{4}}
\qbezier(1025.2,95.7)(1025.2,85.9)(1025.2,76.1)
\put(1025.2,56.6){\circle*{4}}
\qbezier(1025.2,76.1)(1025.2,66.3)(1025.2,56.6)
\put(1025.2,35.5){\circle*{4}}
\qbezier(1025.2,56.6)(1025.2,46.0)(1025.2,35.5)
\put(1047.6,132.7){\circle*{4}}
\qbezier(1025.2,163.1)(1036.4,147.9)(1047.6,132.7)
\put(1047.6,113.8){\circle*{4}}
\qbezier(1047.6,132.7)(1047.6,123.3)(1047.6,113.8)
\put(1047.6,95.7){\circle*{4}}
\qbezier(1047.6,113.8)(1047.6,104.8)(1047.6,95.7)
\put(1047.6,76.1){\circle*{4}}
\qbezier(1047.6,95.7)(1047.6,85.9)(1047.6,76.1)
\put(1047.6,56.6){\circle*{4}}
\qbezier(1047.6,76.1)(1047.6,66.3)(1047.6,56.6)
\put(1047.6,35.5){\circle*{4}}
\qbezier(1047.6,56.6)(1047.6,46.0)(1047.6,35.5)
\put(1047.6,17.4){\circle*{4}}
\qbezier(1047.6,35.5)(1047.6,26.5)(1047.6,17.4)
\put(1047.6,0.0){\circle*{4}}
\qbezier(1047.6,17.4)(1047.6,8.7)(1047.6,0.0)
\put(1002.7,113.8){\circle*{4}}
\qbezier(1002.7,132.0)(1002.7,122.9)(1002.7,113.8)
\put(12.3,5.1){\makebox(0,0)[tl]{$H_1$}}
\put(121.8,5.1){\makebox(0,0)[tl]{$H_2$}}
\put(222.6,5.1){\makebox(0,0)[tl]{$H_3$}}
\put(329.9,5.8){\makebox(0,0)[tl]{$H_4$}}
\put(440.1,5.8){\makebox(0,0)[tl]{$H_5$}}
\put(531.4,5.8){\makebox(0,0)[tl]{$H_6$}}
\put(630.0,5.1){\makebox(0,0)[tl]{$H_7$}}
\put(730.8,5.1){\makebox(0,0)[tl]{$H_8$}}
\put(822.9,5.1){\makebox(0,0)[tl]{$H_9$}}
\put(912.8,5.1){\makebox(0,0)[tl]{$H_{10}$}}
\put(1012.8,5.8){\makebox(0,0)[tl]{$H_{11}$}}
\end{picture}
\end{center}
  \caption{Some starlike trees}\label{fig-case1-two-path}
\end{figure}
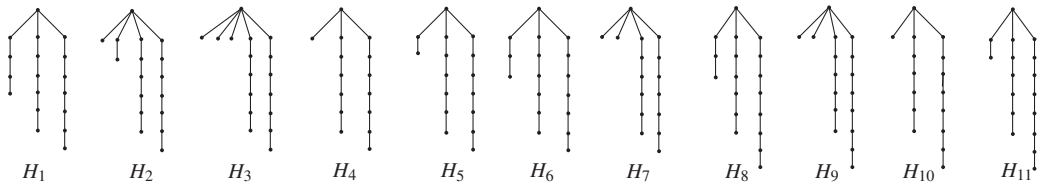

\begin{lem}\label{starlike-mul}
Let $T \supset K_{1,3} $ be a  starlike tree with the center vertex $u$,  and $\lambda$ be an eigenvalue of $T-u$. Then $m(T;\lambda)= m(T-u;\lambda)-1$.
\end{lem}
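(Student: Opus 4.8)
The plan is to translate the multiplicity statement into a dimension count for eigenspaces and to exploit the rigid structure of eigenvectors on a path. Write $d=d_T(u)$ and let $B_1,\dots,B_d$ be the branches of $T$ at $u$, so that $T-u=\bigsqcup_{\ell=1}^d B_\ell$ with each $B_\ell$ a path; denote by $w_\ell$ the endpoint of $B_\ell$ adjacent to $u$. Let $V$ and $W$ be the $\lambda$-eigenspaces of $T$ and of $T-u$, so that $m(T;\lambda)=\dim V$ and $m(T-u;\lambda)=\dim W$. I would study the restriction map $\rho\colon V\to\mathbb{R}^{V(T-u)}$ together with the single linear functional $\phi(\mathbf y)=\sum_{\ell=1}^d y_{w_\ell}$, and show that $\rho$ embeds $V$ isomorphically onto $\ker\phi\subseteq W$ while $\phi$ is not identically zero on $W$; this yields $\dim V=\dim W-1$ at once.

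The key structural input comes from Lemma \ref{lem-path-cycle-polynomial}: the eigenvalues of a path are simple, and the eigenvector of $P_n$ for $\lambda=2\cos\frac{j\pi}{n+1}$ has entries proportional to $\sin\frac{jt\pi}{n+1}$, which is nonzero at the endpoint $t=1$ since $0<\frac{j\pi}{n+1}<\pi$. Consequently $W$ decomposes as the direct sum of the (at most one-dimensional) $\lambda$-eigenspaces of the individual branches; its dimension $r:=m(T-u;\lambda)\ge 1$ equals the number of branches having $\lambda$ in their spectrum, and each corresponding branch-eigenvector is nonzero at its attaching vertex $w_\ell$. In particular $\phi$ applied to such a basic eigenvector returns its nonzero $w_\ell$-entry, so $\phi\not\equiv 0$ on $W$ and $\dim\ker\phi=r-1$.

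The heart of the argument, and the step I expect to be the main obstacle, is to prove that every $\lambda$-eigenvector $\mathbf x$ of $T$ satisfies $x_u=0$; granting this, $\rho$ is injective (an eigenvector vanishing off $u$ is zero) and its image is exactly $\ker\phi$, because the eigenvalue equation of $T$ at $u$ reads $\sum_\ell x_{w_\ell}=\lambda x_u=0$. To establish the claim I would argue by contradiction: assume $x_u\ne 0$ and restrict $\mathbf x$ to a branch $B_\ell$ with adjacency matrix $A_\ell$. The eigenvalue equations of $T$ along $B_\ell$ say precisely that $(\lambda I-A_\ell)\,\mathbf x|_{B_\ell}=x_u\,\mathbf e_{w_\ell}$, where $\mathbf e_{w_\ell}$ is the indicator of $w_\ell$. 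Since $A_\ell$ is symmetric, this system is solvable only if $x_u\,\mathbf e_{w_\ell}$ is orthogonal to $\ker(\lambda I-A_\ell)$; whenever $\lambda$ lies in the spectrum of $B_\ell$ that kernel is spanned by the branch-eigenvector, whose $w_\ell$-entry is nonzero, forcing $x_u=0$. Hence if $x_u\ne 0$ then $\lambda$ is an eigenvalue of no branch, contradicting the hypothesis that $\lambda$ is an eigenvalue of $T-u$. With the claim in hand, $\dim V=\dim\ker\phi=r-1$, that is $m(T;\lambda)=m(T-u;\lambda)-1$. Alternatively, once every eigenvector is known to vanish at $u$ one needs only the upper bound $m(T;\lambda)\le m(T-u;\lambda)-1$ and may invoke the matching lower bound already recorded in Corollary \ref{multi}.
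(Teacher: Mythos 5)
Your proof is correct, and it takes a genuinely different route from the paper's. The paper argues by contradiction in two cases: starting from the bounds $m-1\le m(T;\lambda)\le m+1$ of Corollary \ref{multi}, it rules out $m(T;\lambda)\ge m$ when $m\ge 2$ by deleting a path $P=P_{k_1}+u+P_{k_2}$ through the center and invoking Godsil's Lemma \ref{interlace-path}, and when $m=1$ it evaluates the Lepovi\'c--Gutman formula of Lemma \ref{starlike-poly} at $\lambda$ to see that $f_T(\lambda)\neq 0$. You instead work directly with eigenvectors: the identity $(\lambda I-A_\ell)\,\mathbf x|_{B_\ell}=x_u\mathbf e_{w_\ell}$ together with the fact that a path eigenvector is nonzero at its endpoint forces $x_u=0$ for every $\lambda$-eigenvector of $T$ (since some branch carries $\lambda$), after which restriction embeds the eigenspace of $T$ into the kernel of the nonzero functional $\phi$ on the $r$-dimensional eigenspace of $T-u$. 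Your approach is more self-contained -- it needs only the explicit path eigenvectors and the range-kernel orthogonality of a symmetric matrix, rather than the two external lemmas -- and it yields strictly more, namely the exact structure of the $\lambda$-eigenspace of $T$; the paper's approach is shorter on the page because it outsources the work to quoted results. One small point: your justification that the image of $\rho$ equals (rather than merely lies in) $\ker\phi$ is not spelled out -- surjectivity requires extending a vector of $\ker\phi\cap W$ by $x_u=0$ and checking the eigenvalue equation at $u$ and at the attaching vertices -- but this is routine, and in any case your closing remark correctly observes that only the inclusion (hence the upper bound $m(T;\lambda)\le m(T-u;\lambda)-1$) is needed once it is combined with the lower bound from Corollary \ref{multi}.
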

\begin{proof}
By assumption,  $\lambda$ is an eigenvalue of $T-u$ with multiplicity $m=m(T-u;\lambda)\ge 1$.  It is well known that  each  eigenvalue of a path is simple, and  $T-u$ is a union of some disjoint paths,  there are exactly $m$ paths, say $P_{k_1},P_{k_2},\ldots,P_{k_m}$ in $T-u$, such that $f_{P_{k_i}}(\lambda)=0$ for $i=1,2,\ldots,m$.  We know $m-1\leq m(T;\lambda) \leq m+1$ by Corollary \ref{multi}. Suppose to the contrary that $m(T;\lambda)\neq m-1$. Then $m(T;\lambda)=m$ or $m+1$.

Assume that $m \geq 2$. Let $P$ be the path  obtained by attaching   one end points of the paths $P_{k_1},P_{k_2}$ to the center vertex $u$, i.e., $P=P_{k_1}+u+P_{k_2}$. By Lemma \ref{interlace-path}, $\lambda$ is an eigenvalue of $T-P$ with  multiplicity at least $m(T;\lambda)-1$. However, $T-P$ is a union of some pathes in which only  $P_{k_3},...,P_{k_m}$ have $\lambda$ as its simple eigenvalues, it means that $m(T-P;\lambda)=m(T;\lambda)-2$. This is a contradiction.

Assume that $m =1$. Then, according to our assumption, there is only one path $P_{k_1}$ in $T-u$ that contains $\lambda$ as its eigenvalue. On the other hand, by applying Lemma \ref{starlike-poly} on $T$, we have $f_T(\lambda)\neq 0$, which contradicts that $m(T;\lambda)=m=1$.

The proof is completed.
\end{proof}

Denote by $m(x)=[f_{P_1}(x),f_{P_2}(x),f_{P_3}(x),f_{P_4}(x),f_{P_5}(x)]$ the  least common multiple of $f_{P_1}(x),...,f_{P_5}(x)$, from (\ref{f-eq-1}) we see that $m(x)$ is just a product of basis factors, i.e.,
\begin{equation}\label{ff-0}m(x)=\prod_{p(x)\in S}p(x)=x(x^2-1)(x^2-2)(x^2-x-1)(x^2+x-1)(x^2-3).\end{equation} From now on, we assume that  $T \supset K_{1,3} $ is a quadratic starlike tree with the center vertex $u$ such that $T-u=n_1P_1\cup n_2P_2\cup n_3P_3\cup n_4P_4\cup n_5 P_5$, and
\begin{equation}\label{ff-1}f_T(x)=x^{t_1}(x^2-1)^{t_2}(x^2-2)^{t_3}((x^2-x-1)(x^2+x-1))^{t_4}(x^2-3)^{t_5}g(x),\end{equation}
where  $g(x)=x^2-c$ or $(x^2-ax+b)(x^2+ax+b)$, and  define two functions:
$$\begin{array}{ll}
&t_{(n_1,n_2,n_3,n_4,n_5)}(x)=m(x)[x-(n_1\frac{f_{P_0}(x)}{f_{P_1}(x)}+n_2\frac{f_{P_1}(x)}{f_{P_2}(x)}+n_3\frac{f_{P_2}(x)}{f_{P_3}(x)}
+n_4\frac{f_{P_3}(x)}{f_{P_4}(x)}+n_5\frac{f_{P_4}(x)}{f_{P_5}(x)})],\\
&u_{(z_1,z_2,z_3,z_4,z_5)}(x)=x^{z_1}(x^2-1)^{z_2}(x^2-2)^{z_3}((x^2-x-1)(x^2+x-1))^{z_4}(x^2-3)^{z_5}g(x).
\end{array}
$$

\begin{lem}\label{lem-z}
 Under above assumption,  if $z_1=t_1+1-n_1-n_3-n_5$, $z_2=t_2+1-n_2-n_5$ and $z_i=t_i+1-n_i$ for $i=3,4,5$, then
\begin{equation}\label{Eq-t-u}t_{(n_1,n_2,n_3,n_4,n_5)}(x)=u_{(z_1,z_2,z_3,z_4,z_5)}(x),\end{equation}
\begin{equation}\label{eq-6-10}z_1+2z_2+2z_3+4z_4+2z_5+\partial(g(x))=12,\end{equation}
\begin{equation}\label{eq-z}
\left\{\begin{array}{ll}
z_1=t_1+1-n_1-n_3-n_5=\left\{\begin{array}{ll}
0,&\mbox{if and only if $n_1+n_3+n_5\geq 1$;}\\
2,&\mbox{if and only if $n_1+n_3+n_5=0$,}\\
\end{array}
\right.\\
z_2=t_2+1-n_2-n_5=\left\{\begin{array}{ll}
0,&\mbox{if and only if $n_2+n_5\geq 1$;}\\
\mbox{$1$ or $2$},&\mbox{if and only if $n_2+n_5=0$,}\\\end{array}
\right.\\
z_3=t_3+1-n_3=\left\{\begin{array}{ll}
0,&\mbox{if and only if $n_3\geq 1$;} \\
\mbox{$1$ or $2$},&\mbox{if and only if $n_3=0$,}\end{array}\right. \\
z_4=t_4+1-n_4=\left\{\begin{array}{ll}
0,&\mbox{if and only if $n_4\geq 1$;} \\
\mbox{$1$ or $2$},&\mbox{if and only if $n_4=0$,}\end{array}\right. \\
z_5=t_5+1-n_5=\left\{\begin{array}{ll}
0,&\mbox{if and only if $n_5\geq 1$;} \\
\mbox{$1$ or $2$},&\mbox{if and only if $n_5=0$.}\end{array}\right. \\
\end{array}\right.
\end{equation} 
\end{lem}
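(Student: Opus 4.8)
The plan is to read off everything from the explicit factorizations of the small path polynomials together with the product formula of Lemma \ref{starlike-poly}. From the recurrence $f_{P_n}=xf_{P_{n-1}}-f_{P_{n-2}}$ one has $f_{P_1}=x$, $f_{P_2}=x^2-1$, $f_{P_3}=x(x^2-2)$, $f_{P_4}=(x^2-x-1)(x^2+x-1)$ and $f_{P_5}=x(x^2-1)(x^2-3)$, so by (\ref{ff-0}) every $f_{P_i}$ divides $m(x)$. First I would specialize Lemma \ref{starlike-poly} to $k=5$ and pull out the common factor $\prod_{i=1}^5 f_{P_i}^{n_i}$ to get
\[
f_T(x)=\Big(\prod_{i=1}^5 f_{P_i}^{n_i}(x)\Big)\Big[x-\sum_{i=1}^5 n_i\tfrac{f_{P_{i-1}}(x)}{f_{P_i}(x)}\Big].
\]
The bracket is exactly the one in the definition of $t_{(n_1,\ldots,n_5)}(x)$, whence $t_{(n_1,\ldots,n_5)}(x)=\frac{m(x)}{\prod_{i=1}^5 f_{P_i}^{n_i}(x)}\,f_T(x)$, a genuine polynomial since each $f_{P_i}\mid m(x)$.

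To establish (\ref{Eq-t-u}) I would substitute the factorizations and count, for each basis factor, its power in $\prod_{i=1}^5 f_{P_i}^{n_i}$: the exponents are $n_1+n_3+n_5$ for $x$, $n_2+n_5$ for $x^2-1$, $n_3$ for $x^2-2$, $n_4$ for $(x^2-x-1)(x^2+x-1)$, and $n_5$ for $x^2-3$. Since $m(x)$ carries each basis factor to the first power and $f_T(x)$ carries them to the powers $t_1,\ldots,t_5$ (with the extra factor $g(x)$), the product $\frac{m(x)}{\prod f_{P_i}^{n_i}}f_T(x)$ has its $i$-th basis factor to exactly the power defined for $z_i$ and retains $g(x)$; this is $u_{(z_1,\ldots,z_5)}(x)$. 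Then (\ref{eq-6-10}) is a degree count: $\partial(u_{(z_1,\ldots,z_5)}(x))=z_1+2z_2+2z_3+4z_4+2z_5+\partial(g(x))$, whereas $\partial(t_{(n_1,\ldots,n_5)}(x))=\partial(m(x))-\sum_i i\,n_i+\partial(f_T(x))=11-\sum_i i\,n_i+(\sum_i i\,n_i+1)=12$, using that $T$ has $\sum_i i\,n_i+1$ vertices and $\partial f_{P_i}=i$.

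The heart of the lemma is (\ref{eq-z}), which I would prove root-by-root. The key bookkeeping is to record, via Lemma \ref{lem-path-cycle-polynomial} (solving $2\cos\frac{\pi j}{i+1}=\lambda$), which of $P_1,\ldots,P_5$ have each basis root as a (necessarily simple) eigenvalue: $0$ belongs to $P_1,P_3,P_5$; $\pm1$ to $P_2,P_5$; $\pm\sqrt2$ only to $P_3$; $\frac{\pm1\pm\sqrt5}{2}$ only to $P_4$; and $\pm\sqrt3$ only to $P_5$. Hence the multiplicity of such a root $\lambda$ in $T-u$ is exactly the corresponding sum $s_i$ (one of $n_1+n_3+n_5$, $n_2+n_5$, $n_3$, $n_4$, $n_5$) occurring in the relation $z_i=t_i+1-s_i$. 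When $s_i\ge1$, Lemma \ref{starlike-mul} gives $m(T;\lambda)=s_i-1$, i.e. $t_i=s_i-1$ and $z_i=0$; when $s_i=0$, the root is not an eigenvalue of $T-u$, so Corollary \ref{multi} gives $m(T;\lambda)\in\{0,1\}$, hence $z_i\in\{1,2\}$. The one exceptional case is $\lambda=0$ with $n_1+n_3+n_5=0$, where the generic bound is not sharp enough; here Corollary \ref{starlike-eigen-0} forces $m(T;0)=1$ exactly, giving $z_1=2$ rather than $z_1\in\{1,2\}$.

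The main obstacle is precisely this last step's bookkeeping: correctly pairing each basis factor with the paths realizing its roots (so that $m(T-u;\lambda)$ is read off correctly), and remembering to invoke the sharper Corollary \ref{starlike-eigen-0} in place of the interlacing bound for the eigenvalue $0$. Once the correspondence is fixed, every assertion reduces to a single application of Lemma \ref{starlike-mul} or Corollary \ref{multi}, and the remaining claims are direct substitution and a degree count.
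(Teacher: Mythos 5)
Your proposal is correct and follows essentially the same route as the paper: both identify $t_{(n_1,\ldots,n_5)}(x)=m(x)f_T(x)/f_{T-u}(x)$ via Lemma \ref{starlike-poly}, match basis-factor exponents to obtain (\ref{Eq-t-u}), read off (\ref{eq-6-10}) from a degree count, and derive (\ref{eq-z}) from Lemma \ref{starlike-mul} together with Corollary \ref{multi} for the case of vanishing multiplicity in $T-u$ and Corollary \ref{starlike-eigen-0} for the eigenvalue $0$. Your explicit root-to-path bookkeeping and the separate invocation of Corollary \ref{multi} when $s_i=0$ merely make explicit what the paper leaves implicit.
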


\begin{proof}
First, by Lemma \ref{starlike-poly} we have
$$\begin{array}{ll}\!\!f_T(x)
\!\!&\!\!\!\!=\!\!x\prod_{i=1}^5f^{n_i}_{P_i}(x)-\sum_{i=1}^5[n_if_{P_{i-1}}(x)f^{n_i-1}_{P_i}(x)\prod_{j\not=i}f^{n_j}_{P_j}(x)]\\
&\!\!\!\!=\!\!xf_{T-u}(x)\!-\!(n_1f_{P_0}(x)\frac{f_{T-u}(x)}{f_{P_1}(x)}\!+\!n_2f_{P_1}(x)\frac{f_{T-u}(x)}{f_{P_2}(x)}\!+\!n_3f_{P_2}(x)\frac{f_{T-u}(x)}{f_{P_3}(x)}
\!+\!n_4f_{P_3}(x)\frac{f_{T-u}(x)}{f_{P_4}(x)}\!+\!n_5f_{P_4}(x)\frac{f_{T-u}(x)}{f_{P_5}(x)})\\
&\!\!\!\!=\!\!f_{T-u}(x)[x-(n_1\frac{f_{P_0}(x)}{f_{P_1}(x)}+n_2\frac{f_{P_1}(x)}{f_{P_2}(x)}+n_3\frac{f_{P_2}(x)}{f_{P_3}(x)}
+n_4\frac{f_{P_3}(x)}{f_{P_4}(x)}+n_5\frac{f_{P_4}(x)}{f_{P_5}(x)})],\end{array}
$$
which gives that $\frac{m(x)f_{T}(x)}{f_{T-u}(x)}=t_{(n_1,n_2,n_3,n_4,n_5)}(x)$.
On the other hand,  notice that
$$\begin{array}{ll}
f_{T-u}(x)&=f_{P_1}^{n_1}(x)f_{P_2}^{n_2}(x)f_{P_3}^{n_3}(x)f_{P_4}^{n_4}(x)f_{P_5}^{n_5}(x)\\
&=x^{n_1+n_3+n_5}(x^2-1)^{n_2+n_5}(x^2-2)^{n_3}((x^2-x-1)(x^2+x-1))^{n_4}(x^2 - 3)^{n_5}.\\
\end{array}
$$
From (\ref{ff-0}) and (\ref{ff-1}) we have
 $$\begin{array}{ll}\frac{m(x)f_{T}(x)}{f_{T-u}(x)}
&\!\!\!\!=\!\!x^{t_1+1-n_1-n_3-n_5}(x^2\!\!-\!\!1)^{t_2+1-n_2-n_5}(x^2\!\!-\!\!2)^{t_3+1-n_3}((x^2\!\!-\!\!x\!\!-\!\!1)(x^2\!\!+\!\!x\!\!-\!\!1))^{t_4+1-n_4}(x^2\!\!-\!\!3)^{t_5+1-n_5}g(x)\\
&\!\!\!\!=\!\!u_{(z_1,z_2,z_3,z_4,z_5)}(x).\end{array}$$
Therefore, Eq. (\ref{Eq-t-u}) follows.

Next, by considering the degree of Eq. (\ref{Eq-t-u}), we have $\partial(u_{(z_1,z_2,z_3,z_4,z_5)}(x))=z_1+2z_2+2z_3+4z_4+2z_5+\partial(g(x))=\partial(t_{(n_1,n_2,n_3,n_4,n_5)}(x))=\partial(m(x))+1=12$.

At last, we get the expression of $t_1$ by Corollary \ref{0-mul} and that of $t_2$, $t_3$, $t_4$ and $t_5$, respectively,  by Lemma \ref{starlike-mul}.
$$\begin{array}{ll}
t_1=\left\{\begin{array}{ll}
n_1+n_3+n_5-1,&\mbox{if and only if $n_1+n_3+n_5\geq 1$;}\\
1,&\mbox{if and only if $n_1+n_3+n_5=0$,}
\end{array}\right.\\
  t_2=\left\{\begin{array}{ll}
n_2+n_5-1,&\mbox{if and only if $n_2+n_5\geq 1$;}\\
\mbox{$0$ or $1$},&\mbox{if and only if $n_2+n_5=0$,}\\\end{array}\right.\\
t_3=\left\{\begin{array}{ll}
n_3-1,&\mbox{if and only if $n_3\geq 1$;}\\
\mbox{$0$ or $1$},&\mbox{if and only if $n_3=0$,}\end{array}\right.\\
 t_4=\left\{\begin{array}{ll}
n_4-1,&\mbox{if and only if $n_4\geq 1$;} \\
\mbox{$0$ or $1$},&\mbox{if and only if $n_4=0$,}\end{array}\right.\\
t_5=\left\{\begin{array}{ll}
n_5-1,&\mbox{if and only if $n_5\geq 1$;} \\
\mbox{$0$ or $1$},&\mbox{if and only if $n_5=0$,}\end{array}\right. \\
\end{array}$$
which leads to (\ref{eq-z}).

We complete this proof.
\end{proof}
\begin{remark}
We call Eq. (\ref{Eq-t-u}) and Eq. (\ref{eq-6-10}) the character equation and the parameter equation of $f_T(x)$, respectively, and (\ref{eq-z}) can be viewed as the restriction condition of Eq. (\ref{eq-6-10}).  
It is clear that Eq. (\ref{eq-6-10}) has finite  solutions since $0\le z_i\le 2$. In fact,  there also exist some relations among  $z_1$, $z_2$, $z_3$, $z_4$ and $z_5$. For example, from (\ref{eq-z}) we see that $z_1=z_2=0$ if $z_5=0$.  In terms of the restriction condition, we will identify  which $n_i$ equals to zero in $T-u=n_1P_1\cup n_2P_2\cup n_3P_3\cup n_4P_4\cup n_5 P_5$, and thus will simplify the character equation.
\end{remark}

\subsection{The quadratic starlike trees of form (I)}

In this subsection, we will determine  quadratic starlike tree  $T$ of  form (I). For a quadratic starlike tree   $T \supset K_{1,3} $ with the center vertex $u$, by Lemma \ref{starlike-two-path} and \ref{starlike-two-path-distinct} we have $T-u=n_1P_1\cup n_2P_2\cup n_3P_3\cup n_4P_4\cup n_5 P_5\cup P_k$ for some $k> 5$ or $T-u=n_1P_1\cup n_2P_2\cup n_3P_3\cup n_4P_4\cup n_5 P_5$. The following  Lemma \ref{starlike-P6} shows that the former does not occur.
\begin{lem}\label{starlike-P6}
Let  $T \supset K_{1,3} $ be a quadratic starlike tree with the center vertex $u$. If $f_{T}(x)$ is of  form (I) then $T-u$ can not  contain  any path $P_k$ with   $k>5$.
\end{lem}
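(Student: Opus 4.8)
The plan is to argue by contradiction, turning the single inequality $\lambda_2(T)\le\sqrt3$ into a finiteness statement and then dispatching a short explicit list by hand. Suppose $f_T$ is of form (I) but $T-u$ nevertheless contains a path $P_k$ with $k>5$. By Lemma~\ref{starlike-two-path} and Lemma~\ref{starlike-two-path-distinct} this $P_k$ is the unique branch of length at least $6$, so I may write $T-u=\bigcup_{i=1}^{5}n_iP_i\cup P_k$ with $k\ge6$. The one structural fact I will use repeatedly is that, by Corollary~\ref{starlike-the-third-largest}(i), form (I) forces $\lambda_2(T)\le\sqrt3$; everything else is interlacing.

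First I would bound the diameter. Since $T\supset K_{1,3}$ we have $d_T(u)\ge3$, so besides $P_k$ there is a second-longest branch $P_j$ with $1\le j\le5$, and the two longest legs together with $u$ induce the path $P_k+u+P_j\cong P_{k+j+1}$. By Lemma~\ref{interlace} and Lemma~\ref{lem-path-cycle-polynomial}, $\lambda_2(T)\ge\lambda_2(P_{k+j+1})=2\cos\frac{2\pi}{k+j+2}$; comparing with $\lambda_2(T)\le\sqrt3=2\cos\frac{\pi}{6}$ gives $k+j\le10$, hence $d(T)\le10$ and $6\le k\le9$.

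Next I would bound the degree of the centre, which is the step that actually collapses the infinitely many candidates to finitely many. Taking the six vertices of $P_k$ nearest $u$ together with one vertex from each of the remaining $d_T(u)-1$ branches, I obtain an induced spider with one leg of length $6$ and $m:=d_T(u)-1$ legs of length $1$, i.e. the broom $P_6\cup mP_1$ based at $u$. A direct computation of its characteristic polynomial (a quartic in the variable $x^2$) shows $\lambda_2>\sqrt3$ as soon as $m\ge4$: evaluating that quartic at $x^2=3$ yields the value $m-3$, which turns positive for $m\ge4$ and pushes the second-largest eigenvalue above $\sqrt3$. By Lemma~\ref{interlace} this would give $\lambda_2(T)>\sqrt3$, a contradiction, so $m\le3$ and $d_T(u)\le4$. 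With $6\le k\le9$, second-longest branch $\le10-k$, and $d_T(u)\le4$, the tree $T$ now ranges over an explicit finite list of small spiders, and a direct calculation of their characteristic polynomials shows each carries an eigenvalue of algebraic degree at least $3$ (for instance the spider with legs $6,1,1,1$ has the irreducible factor $x^6-6x^4+7x^2-1$), so none of them is even quadratic, let alone of form (I). This contradiction completes the proof.

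The step I expect to be the crux is exactly the degree bound for $u$: plain interlacing against $P_k$, or against longer induced paths, is not enough, since brooms such as $P_6\cup 2P_1$ have $\lambda_2\approx1.66<\sqrt3$ and are excluded only because they carry a high-degree eigenvalue, not because of where their eigenvalues sit. The resolution is to choose the right induced witness — the broom $P_6\cup mP_1$, whose second eigenvalue increases monotonically in $m$ towards $\lambda_1(P_6)=2\cos\frac{\pi}{7}>\sqrt3$ — so that for $m\ge4$ interlacing alone kills the configuration; pinning down the exact threshold $m=4$ (equivalently $d_T(u)\le4$) and then eliminating the residual finite list by direct spectral computation is where the real work lies.
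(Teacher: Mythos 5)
Your argument is correct and follows the same overall strategy as the paper's proof: assume a branch $P_k$ with $k\ge 6$ survives, invoke Corollary~\ref{starlike-the-third-largest}(i) to get $\lambda_2(T)\le\sqrt3$, use interlacing against forbidden induced subgraphs to cut the possibilities down to a finite list, and dispose of the list by direct computation. Where you differ is in the choice of forbidden subgraphs. The paper bounds $k\le 7$ and the centre degree by numerically computing $\lambda_2$ for seven specific spiders ($U_6,U_7,U_8$ and four larger ones in Table~\ref{table-forbidden-1}), which shrinks the residual list to the six trees $F_1,\dots,F_6$ of Table~\ref{table-subgraph}. You instead bound $k\le 9$ by interlacing with the induced path $P_{k+j+1}$ (correct: $2\cos\frac{2\pi}{k+j+2}\le\sqrt3$ forces $k+j\le 10$), and bound $d_T(u)\le 4$ via the broom $P_6\cup mP_1$; I verified that the quartic in $x^2$ does evaluate to $m-3$ at $x^2=3$, and the inference $g(3)>0\Rightarrow\lambda_2>\sqrt3$ is sound because $\lambda_1^2>3$ (the broom properly contains $K_{1,3}$), so an even, hence $\ge 2$, number of roots of $g$ exceed $3$. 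This identity is a genuinely cleaner, non-numerical way to get the degree bound, and it reproduces exactly the paper's threshold ($m=4$ forbidden, $m=3$ giving $\lambda_2=\sqrt3$ exactly). The price of stopping there is that your residual list contains on the order of fifty spiders rather than six, and you only assert — with one worked example, the legs-$(6,1,1,1)$ spider, whose factorization indeed matches $F_5$ in Table~\ref{table-subgraph} — that every one of them has an eigenvalue of degree at least $3$. That assertion is true (it is what the paper's full classification ultimately implies, and spot checks such as $T-u=P_9\cup 2P_1$, whose characteristic polynomial has the Eisenstein-irreducible factor $x^{10}-11x^8+44x^6-77x^4+55x^2-11$, confirm it), but as written you delegate a substantially larger finite verification to the reader than the paper does. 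Either carry out that check explicitly or import one more forbidden subgraph — e.g.\ the paper's $U_8$, whose second eigenvalue $\approx1.782>\sqrt3$ eliminates every case with $k\ge 8$ at a stroke — to bring the list down to something that can be displayed.
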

\begin{proof}
Suppose to the contrary that $T-u$ contains exactly one path $P_k$ for some $k > 5$.
Since $f_T(x)$ has form (I), we have $\lambda_2(T)\leq\sqrt{3}$ by Corollary \ref{starlike-the-third-largest}(1). Thus $U_k$ described in Table \ref{table-forbidden-1} is a forbidden subgraph since $\lambda_2(U_k)>\sqrt{3}$ for  $k=6,7,8$. It implies that $T-u$ contains no any path $P_l$ for $l\geq8$, since otherwise $d_T(u)=2$ and $T$ will be a path $P_{j}$ for $j\ge 2+l\ge10$, which contradicts Theorem \ref{thm-path-cyc-1}. Thus $k=6$ or $7$. Since $U_{6}$ and $U_{7}$ are forbidden subgraphs of $T$, we deduce that
$$
d_T(u)=\left\{\begin{array}{ll}3,4,& \mbox{if $T-u$ contains $P_6$; }\\
3,& \mbox{if $T-u$ contains $P_7$. }\\
\end{array}\right.$$
Let $T_{1,0,0,1,0,1}$, $T_{0,1,1,0,0,1}$ and  $T_{2,1,0,0,0,1}$, shown  in Table \ref{table-forbidden-1}, be the graphs such that $T-u$ contains $P_6$, and $T_{1,1,0,0,0,0,1}$, shown  in Table \ref{table-forbidden-1}, be the graph such that $T-u$ contains $P_7$. By direct calculation, we know that they    are all forbidden subgraphs of $T$ since their second largest eigenvalues greater than $\sqrt{3}$ ( see  Table \ref{table-forbidden-1} ). Now let $F_i$ be the graphs shown  in Table \ref{table-subgraph} for $i=1,2,...,6$. Therefore,
we deduce that
$$T=\left\{\begin{array}{ll} F_1, F_2, F_3 \mbox{ or $F_4$, }& \mbox{ if $d_T(u)=3$ and $T-u$ contains $P_6$;}\\
  F_5, & \mbox{ if $d_T(u)=4$ and $T-u$ contains $P_6$;}\\
    F_6, & \mbox{ if $d_T(u)=3$ and $T-u$ contains $P_7$.}\\
\end{array}\right.
$$
By direct computation we can find the characteristic polynomials of   $F_1, F_2, F_3,F_4$, $F_5$ and  $F_6$ that  are listed in Table \ref{table-subgraph}, from which we see that they are not quadratic. Therefore, $T-u$  can't contain a path $P_k$ for $k=6,7$.

Summarizing the above discussions, we get our result.
\end{proof}
\begin{table}[h]
\scriptsize
\caption{\small Forbidden subgraphs of form (I)}
\centering
\renewcommand\arraystretch{1.2}
\begin{tabular*}{13.6cm}{m{10pt}|m{75pt}|m{40pt}|m{25pt}|m{75pt}|m{30pt}|m{45pt}}
\hline
 $k$ & Forbidden subgraph $U_k$& $\lambda_2(U_k)$ & $d_T(u)$ & Forbidden subgraph $H$ & $\lambda_2(H)$ & $T$\\\hline
&&  &\multirow{2}*{$3$}&
\unitlength 0.10mm 
\linethickness{0.4pt}
\ifx\plotpoint\undefined\newsavebox{\plotpoint}\fi
\centering
\begin{picture}(47.9,69)
\put(19.6,65.3){\circle*{4}}
\put(0.0,45.7){\circle*{4}}
\qbezier(19.6,65.3)(9.8,55.5)(0.0,45.7)
\put(15.2,45.7){\circle*{4}}
\qbezier(19.6,65.3)(17.4,55.5)(15.2,45.7)
\put(37.7,47.1){\circle*{4}}
\qbezier(19.6,65.3)(28.6,56.2)(37.7,47.1)
\put(37.7,8.7){\circle*{4}}
\qbezier(37.7,18.9)(37.7,13.8)(37.7,8.7)
\put(37.7,26.1){\oval(16.0,51.5)}\qbezier(37.7,47.1)(37.7,40.6)(37.7,34.1)
\qbezier[4](37.7,34.1)(37.7,26.5)(37.7,18.9)
\put(47.9,29.7){\makebox(0,0)[tl]{$P_6$}}
\put(15.2,31.9){\circle*{4}}
\qbezier(15.2,45.7)(15.2,38.8)(15.2,31.9)
\put(15.2,17.4){\circle*{4}}
\qbezier(15.2,31.9)(15.2,24.7)(15.2,17.4)
\put(15.2,4.4){\circle*{4}}
\qbezier(15.2,17.4)(15.2,10.9)(15.2,4.4)
\end{picture}
 &$1.7531$&  \multirow{2}*{$F_1,F_2,F_3,F_4$ }\\
\cline{5-6}
 $6$ &\unitlength 0.10mm 
\linethickness{0.4pt}
\ifx\plotpoint\undefined\newsavebox{\plotpoint}\fi
\centering
\begin{picture}(84.8,80)
\put(56.6,73.2){\circle*{4}}
\put(29.0,45.7){\circle*{4}}
\qbezier(56.6,73.2)(42.8,59.5)(29.0,45.7)
\put(0.0,46.4){\circle*{4}}
\qbezier(56.6,73.2)(28.3,59.8)(0.0,46.4)
\put(42.8,45.7){\circle*{4}}
\qbezier(56.6,73.2)(49.7,59.5)(42.8,45.7)
\put(13.8,46.4){\circle*{4}}
\qbezier(56.6,73.2)(35.2,59.8)(13.8,46.4)
\put(74.7,47.1){\circle*{4}}
\qbezier(56.6,73.2)(65.6,60.2)(74.7,47.1)
\put(74.7,8.7){\circle*{4}}
\qbezier(74.7,18.9)(74.7,13.8)(74.7,8.7)
\put(74.7,26.1){\oval(16.0,51.5)}\qbezier(74.7,47.1)(74.7,40.6)(74.7,34.1)
\qbezier[4](74.7,34.1)(74.7,26.5)(74.7,18.9)
\put(84.8,29.7){\makebox(0,0)[tl]{$P_6$}}
\end{picture}&$1.7592$&  &
 \unitlength 0.10mm 
\linethickness{0.4pt}
\ifx\plotpoint\undefined\newsavebox{\plotpoint}\fi
\centering
\begin{picture}(47.9,69)
\put(19.6,65.3){\circle*{4}}
\put(0.0,45.7){\circle*{4}}
\qbezier(19.6,65.3)(9.8,55.5)(0.0,45.7)
\put(15.2,45.7){\circle*{4}}
\qbezier(19.6,65.3)(17.4,55.5)(15.2,45.7)
\put(37.7,47.1){\circle*{4}}
\qbezier(19.6,65.3)(28.6,56.2)(37.7,47.1)
\put(37.7,8.7){\circle*{4}}
\qbezier(37.7,18.9)(37.7,13.8)(37.7,8.7)
\put(37.7,26.1){\oval(16.0,51.5)}\qbezier(37.7,47.1)(37.7,40.6)(37.7,34.1)
\qbezier[4](37.7,34.1)(37.7,26.5)(37.7,18.9)
\put(47.9,29.7){\makebox(0,0)[tl]{$P_6$}}
\put(15.2,31.9){\circle*{4}}
\qbezier(15.2,45.7)(15.2,38.8)(15.2,31.9)
\put(15.2,17.4){\circle*{4}}
\qbezier(15.2,31.9)(15.2,24.7)(15.2,17.4)
\put(0.0,31.9){\circle*{4}}
\qbezier(0.0,45.7)(0.0,38.8)(0.0,31.9)
\end{picture}
 & $1.7469$\\
\cline{4-7}
&&&$4$
 & \unitlength 0.09mm 
\linethickness{0.4pt}
\ifx\plotpoint\undefined\newsavebox{\plotpoint}\fi
\centering
\begin{picture}(71.1,78)
\put(42.8,73.2){\circle*{4}}
\put(15.2,45.7){\circle*{4}}
\qbezier(42.8,73.2)(29.0,59.5)(15.2,45.7)
\put(29.0,45.7){\circle*{4}}
\qbezier(42.8,73.2)(35.9,59.5)(29.0,45.7)
\put(0.0,46.4){\circle*{4}}
\qbezier(42.8,73.2)(21.4,59.8)(0.0,46.4)
\put(60.9,47.1){\circle*{4}}
\qbezier(42.8,73.2)(51.8,60.2)(60.9,47.1)
\put(60.9,8.7){\circle*{4}}
\qbezier(60.9,18.9)(60.9,13.8)(60.9,8.7)
\put(60.9,26.1){\oval(16.0,51.5)}\qbezier(60.9,47.1)(60.9,40.6)(60.9,34.1)
\qbezier[4](60.9,34.1)(60.9,26.5)(60.9,18.9)
\put(71.1,29.7){\makebox(0,0)[tl]{$P_6$}}
\put(29.0,26.8){\circle*{4}}
\qbezier(29.0,45.7)(29.0,36.3)(29.0,26.8)
\end{picture}
 & $1.7482$ & ~~~~$F_5$~~~~ \\\hline
 $7$ &
\unitlength 0.10mm 
\linethickness{0.4pt}
\ifx\plotpoint\undefined\newsavebox{\plotpoint}\fi
\centering
\begin{picture}(55.1,69)
\put(26.8,65.3){\circle*{4}}
\put(11.6,46.4){\circle*{4}}
\qbezier(26.8,65.3)(19.2,55.8)(11.6,46.4)
\put(22.5,45.7){\circle*{4}}
\qbezier(26.8,65.3)(24.7,55.5)(22.5,45.7)
\put(45.0,47.1){\circle*{4}}
\qbezier(26.8,65.3)(35.9,56.2)(45.0,47.1)
\put(45.0,8.7){\circle*{4}}
\qbezier(45.0,18.9)(45.0,13.8)(45.0,8.7)
\put(45.0,26.1){\oval(16.0,51.5)}\qbezier(45.0,47.1)(45.0,40.6)(45.0,34.1)
\qbezier[4](45.0,34.1)(45.0,26.5)(45.0,18.9)
\put(55.1,29.7){\makebox(0,0)[tl]{$P_7$}}
\put(0.0,46.4){\circle*{4}}
\qbezier(26.8,65.3)(13.4,55.8)(0.0,46.4)
\end{picture}
 & $1.7943$ & $3$ &
\unitlength 0.10mm 
\linethickness{0.4pt}
\ifx\plotpoint\undefined\newsavebox{\plotpoint}\fi
\centering
\begin{picture}(43.5,69)
\put(15.2,65.3){\circle*{4}}
\put(0.0,46.4){\circle*{4}}
\qbezier(15.2,65.3)(7.6,55.8)(0.0,46.4)
\put(10.9,45.7){\circle*{4}}
\qbezier(15.2,65.3)(13.1,55.5)(10.9,45.7)
\put(33.4,47.1){\circle*{4}}
\qbezier(15.2,65.3)(24.3,56.2)(33.4,47.1)
\put(33.4,8.7){\circle*{4}}
\qbezier(33.4,18.9)(33.4,13.8)(33.4,8.7)
\put(33.4,26.1){\oval(16.0,51.5)}\qbezier(33.4,47.1)(33.4,40.6)(33.4,34.1)
\qbezier[4](33.4,34.1)(33.4,26.5)(33.4,18.9)
\put(43.5,29.7){\makebox(0,0)[tl]{$P_7$}}
\put(10.9,33.4){\circle*{4}}
\qbezier(10.9,45.7)(10.9,39.5)(10.9,33.4)
\end{picture}
& $1.7692$ & ~~~$F_6$~~~\\\hline
$8$&
\unitlength 0.10mm 
\linethickness{0.4pt}
\ifx\plotpoint\undefined\newsavebox{\plotpoint}\fi
\centering
\begin{picture}(43.5,69)
\put(15.2,65.3){\circle*{4}}
\put(0.0,46.4){\circle*{4}}
\qbezier(15.2,65.3)(7.6,55.8)(0.0,46.4)
\put(10.9,45.7){\circle*{4}}
\qbezier(15.2,65.3)(13.1,55.5)(10.9,45.7)
\put(33.4,47.1){\circle*{4}}
\qbezier(15.2,65.3)(24.3,56.2)(33.4,47.1)
\put(33.4,8.7){\circle*{4}}
\qbezier(33.4,18.9)(33.4,13.8)(33.4,8.7)
\put(33.4,26.1){\oval(16.0,51.5)}\qbezier(33.4,47.1)(33.4,40.6)(33.4,34.1)
\qbezier[4](33.4,34.1)(33.4,26.5)(33.4,18.9)
\put(43.5,29.7){\makebox(0,0)[tl]{$P_8$}}
\end{picture}
&$1.7820$& $2$ &&&\\\hline
\end{tabular*}\label{table-forbidden-1}
\end{table}

\begin{table}[h]
\scriptsize
\caption{\small Possible graphs  of form (I)}
\centering
\begin{tabular*}{11cm}{m{15pt}|m{60pt}|m{155pt}|m{25pt}}
\hline
$F_i$ &~~\ ~~~~graph~~~~~& the factorization of $f_{F_i}(x)$&  quadratic\\\hline
$F_1$ &
\unitlength 0.08mm 
\linethickness{0.4pt}
\ifx\plotpoint\undefined\newsavebox{\plotpoint}\fi
\centering
\begin{picture}(43.5,69)
\put(15.2,65.3){\circle*{4}}
\put(0.0,46.4){\circle*{4}}
\qbezier(15.2,65.3)(7.6,55.8)(0.0,46.4)
\put(10.9,45.7){\circle*{4}}
\qbezier(15.2,65.3)(13.1,55.5)(10.9,45.7)
\put(33.4,47.1){\circle*{4}}
\qbezier(15.2,65.3)(24.3,56.2)(33.4,47.1)
\put(33.4,8.7){\circle*{4}}
\qbezier(33.4,18.9)(33.4,13.8)(33.4,8.7)
\put(33.4,26.1){\oval(16.0,51.5)}\qbezier(33.4,47.1)(33.4,40.6)(33.4,34.1)
\qbezier[4](33.4,34.1)(33.4,26.5)(33.4,18.9)
\put(43.5,29.7){\makebox(0,0)[tl]{$P_6$}}
\end{picture}
&$
x(x^8 - 8x^6 + 20x^4 - 16x^2 + 2)$& No\\\hline
$F_2$ &
\unitlength 0.08mm 
\linethickness{0.4pt}
\ifx\plotpoint\undefined\newsavebox{\plotpoint}\fi
\centering
\begin{picture}(43.5,69)
\put(15.2,65.3){\circle*{4}}
\put(0.0,46.4){\circle*{4}}
\qbezier(15.2,65.3)(7.6,55.8)(0.0,46.4)
\put(10.9,45.7){\circle*{4}}
\qbezier(15.2,65.3)(13.1,55.5)(10.9,45.7)
\put(33.4,47.1){\circle*{4}}
\qbezier(15.2,65.3)(24.3,56.2)(33.4,47.1)
\put(33.4,8.7){\circle*{4}}
\qbezier(33.4,18.9)(33.4,13.8)(33.4,8.7)
\put(33.4,26.1){\oval(16.0,51.5)}\qbezier(33.4,47.1)(33.4,40.6)(33.4,34.1)
\qbezier[4](33.4,34.1)(33.4,26.5)(33.4,18.9)
\put(43.5,29.7){\makebox(0,0)[tl]{$P_6$}}
\put(10.9,36.3){\circle*{4}}
\qbezier(10.9,45.7)(10.9,41.0)(10.9,36.3)
\end{picture}
&$x^{10} - 9x^8 + 27x^6 - 31x^4 + 12x^2 - 1$& No\\\hline
$F_3$ &
\unitlength 0.08mm 
\linethickness{0.4pt}
\ifx\plotpoint\undefined\newsavebox{\plotpoint}\fi
\centering
\begin{picture}(43.5,69)
\put(15.2,65.3){\circle*{4}}
\put(0.0,46.4){\circle*{4}}
\qbezier(15.2,65.3)(7.6,55.8)(0.0,46.4)
\put(10.9,45.7){\circle*{4}}
\qbezier(15.2,65.3)(13.1,55.5)(10.9,45.7)
\put(33.4,47.1){\circle*{4}}
\qbezier(15.2,65.3)(24.3,56.2)(33.4,47.1)
\put(33.4,8.7){\circle*{4}}
\qbezier(33.4,18.9)(33.4,13.8)(33.4,8.7)
\put(33.4,26.1){\oval(16.0,51.5)}\qbezier(33.4,47.1)(33.4,40.6)(33.4,34.1)
\qbezier[4](33.4,34.1)(33.4,26.5)(33.4,18.9)
\put(43.5,29.7){\makebox(0,0)[tl]{$P_6$}}
\put(10.9,36.3){\circle*{4}}
\qbezier(10.9,45.7)(10.9,41.0)(10.9,36.3)
\put(10.9,26.8){\circle*{4}}
\qbezier(10.9,36.3)(10.9,31.5)(10.9,26.8)
\end{picture}
&$x(x - 1)(x + 1)(x^2 - 3)(x^6 - 6x^4 + 8x^2 - 1)$& No\\\hline
$F_4$ &
\unitlength 0.08mm 
\linethickness{0.4pt}
\ifx\plotpoint\undefined\newsavebox{\plotpoint}\fi
\centering
\begin{picture}(43.5,69)
\put(15.2,65.3){\circle*{4}}
\put(0.0,46.4){\circle*{4}}
\qbezier(15.2,65.3)(7.6,55.8)(0.0,46.4)
\put(10.9,45.7){\circle*{4}}
\qbezier(15.2,65.3)(13.1,55.5)(10.9,45.7)
\put(33.4,47.1){\circle*{4}}
\qbezier(15.2,65.3)(24.3,56.2)(33.4,47.1)
\put(33.4,8.7){\circle*{4}}
\qbezier(33.4,18.9)(33.4,13.8)(33.4,8.7)
\put(33.4,26.1){\oval(16.0,51.5)}\qbezier(33.4,47.1)(33.4,40.6)(33.4,34.1)
\qbezier[4](33.4,34.1)(33.4,26.5)(33.4,18.9)
\put(43.5,29.7){\makebox(0,0)[tl]{$P_6$}}
\put(10.9,36.3){\circle*{4}}
\qbezier(10.9,45.7)(10.9,41.0)(10.9,36.3)
\put(0.0,36.3){\circle*{4}}
\qbezier(0.0,46.4)(0.0,41.3)(0.0,36.3)
\end{picture}
&$x(x - 1)(x + 1)(x^2 - 3)(x^6 - 6x^4 + 8x^2 - 2)$& No\\\hline
$F_5$ &
\unitlength 0.08mm 
\linethickness{0.4pt}
\ifx\plotpoint\undefined\newsavebox{\plotpoint}\fi
\centering
\begin{picture}(53.7,69)
\put(25.4,65.3){\circle*{4}}
\put(10.2,46.4){\circle*{4}}
\qbezier(25.4,65.3)(17.8,55.8)(10.2,46.4)
\put(21.0,45.7){\circle*{4}}
\qbezier(25.4,65.3)(23.2,55.5)(21.0,45.7)
\put(43.5,47.1){\circle*{4}}
\qbezier(25.4,65.3)(34.4,56.2)(43.5,47.1)
\put(43.5,8.7){\circle*{4}}
\qbezier(43.5,18.9)(43.5,13.8)(43.5,8.7)
\put(43.5,26.1){\oval(16.0,51.5)}\qbezier(43.5,47.1)(43.5,40.6)(43.5,34.1)
\qbezier[4](43.5,34.1)(43.5,26.5)(43.5,18.9)
\put(53.7,29.7){\makebox(0,0)[tl]{$P_6$}}
\put(0.0,46.4){\circle*{4}}
\qbezier(25.4,65.3)(12.7,55.8)(0.0,46.4)
\end{picture}
&$x^2(x^2 - 3)(x^6 - 6x^4 + 7x^2 - 1)$& No\\\hline
$F_6$ &
\unitlength 0.08mm 
\linethickness{0.4pt}
\ifx\plotpoint\undefined\newsavebox{\plotpoint}\fi
\centering
\begin{picture}(43.5,69)
\put(15.2,65.3){\circle*{4}}
\put(0.0,46.4){\circle*{4}}
\qbezier(15.2,65.3)(7.6,55.8)(0.0,46.4)
\put(10.9,45.7){\circle*{4}}
\qbezier(15.2,65.3)(13.1,55.5)(10.9,45.7)
\put(33.4,47.1){\circle*{4}}
\qbezier(15.2,65.3)(24.3,56.2)(33.4,47.1)
\put(33.4,8.7){\circle*{4}}
\qbezier(33.4,18.9)(33.4,13.8)(33.4,8.7)
\put(33.4,26.1){\oval(16.0,51.5)}\qbezier(33.4,47.1)(33.4,40.6)(33.4,34.1)
\qbezier[4](33.4,34.1)(33.4,26.5)(33.4,18.9)
\put(43.5,29.7){\makebox(0,0)[tl]{$P_7$}}
\end{picture}
&$x^2(x^2 - 3)(x^6 - 6x^4 + 9x^2 - 3)$& No\\\hline
\end{tabular*}\label{table-subgraph}
\end{table}

\begin{thm}\label{thm-quadratic-starlike-I}
Let  $T \supset K_{1,3} $ be a quadratic starlike tree. Then $f_{T}(x)$ is of  form (I) if and only if  $T$ is one of the four graphs that listed in Table \ref{table-quadratic-I}.
\end{thm}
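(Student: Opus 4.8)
The plan is to couple the structural restriction of Lemma \ref{starlike-P6} with the arithmetic of the parameter equation in Lemma \ref{lem-z}. Because $f_T(x)$ is assumed to be of form (I), Lemma \ref{starlike-P6} guarantees that $T-u$ has no pendant path longer than $P_5$, so I may write $T-u=n_1P_1\cup n_2P_2\cup n_3P_3\cup n_4P_4\cup n_5P_5$ and apply Lemma \ref{lem-z} directly. Here $g(x)=x^2-c$ has $\partial(g(x))=2$, so the parameter equation (\ref{eq-6-10}) specializes to
\[
z_1+2z_2+2z_3+4z_4+2z_5=10,
\]
to be solved subject to the restriction conditions (\ref{eq-z}), in which $z_1\in\{0,2\}$ and $z_2,z_3,z_4,z_5\in\{0,1,2\}$.

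First I would enumerate all admissible tuples $(z_1,z_2,z_3,z_4,z_5)$. Since every $z_i$ is bounded this is a finite search, and the logical equivalences in (\ref{eq-z}) prune it sharply: for instance $z_5=0$ forces $z_1=z_2=0$, while $z_1=2$ forces $n_1=n_3=n_5=0$ and hence $z_3,z_5\in\{1,2\}$. This reduces the solution set to a short list of shape vectors, each of which prescribes exactly which $n_i$ vanish and which are positive.

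For each surviving tuple I would then pass from the shape vector to the actual tree through the character equation (\ref{Eq-t-u}), namely $t_{(n_1,\ldots,n_5)}(x)=u_{(z_1,\ldots,z_5)}(x)$. The right-hand side is the explicit product of basis factors times $x^2-c$ determined by $(z_1,\ldots,z_5)$, whereas the left-hand side is the degree-$12$ polynomial $m(x)\left[x-\sum_{i=1}^{5}n_i f_{P_{i-1}}(x)/f_{P_i}(x)\right]$, which is affine-linear in the $n_i$. Both sides are even, so it suffices to match the even-indexed coefficients; this yields a linear system that determines the positive $n_i$ together with the constant $c$. In several cases one $n_i$ survives as a free parameter, so the outcome is a whole family rather than a single tree. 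For example $(z_1,\ldots,z_5)=(0,1,1,1,1)$ forces $n_2=n_3=n_4=n_5=0$ and collapses the identity to $c=n_1$, recovering the star family $K_{1,n_1}$ with $\lambda_1(T)=\sqrt{n_1}$. Imposing $c\ge 4$ (forced by $\lambda_1(T)\ge 2$ in Lemma \ref{starlike-rad}) fixes the admissible range of the parameter, and I would discard any tuple whose linear system has no nonnegative integral solution or violates the prescribed zero/nonzero pattern. The consistent outcomes are exactly the four trees of Table \ref{table-quadratic-I}; substituting the resulting $n_i$ into the $t_i$-expressions of Lemma \ref{lem-z} together with the value of $c$ writes out $f_T(x)$ explicitly in form (I). The converse is routine, since each tabulated tree is checked via Lemma \ref{starlike-poly} to have characteristic polynomial of form (I).

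The step I expect to be the main obstacle is the case-by-case solution of the character equation: although each individual coefficient comparison is mechanical, one must carry the bookkeeping across all admissible shape vectors, correctly detect when a free parameter appears, and verify that no spurious candidate survives the integrality, nonnegativity, zero-pattern, and $c\ge 4$ tests. Some care is also needed to confirm that the derived $\sqrt{c}$ really is the spectral radius $\lambda_1(T)$ and not a smaller eigenvalue; this is guaranteed once $c\ge 4$, because by Corollary \ref{factor-cor-1} every other factor of $f_T(x)$ has its roots of modulus strictly below $2$, which is precisely what legitimizes placing the lone factor $x^2-c$ as the top factor in form (I).
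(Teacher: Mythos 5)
Your proposal is correct and follows essentially the same route as the paper: invoke Lemma \ref{starlike-P6} to reduce to $T-u=n_1P_1\cup\cdots\cup n_5P_5$, enumerate the finitely many tuples $(z_1,\ldots,z_5)$ satisfying the parameter equation under the restriction (\ref{eq-z}), and solve the resulting linear systems from coefficient comparison in the character equation, discarding the inconsistent ones. The paper carries out exactly this plan (fifteen tuples, eleven invalid, four valid), so no further comment is needed.
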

\begin{proof}
According to Lemma \ref{starlike-P6},  $T-u=n_1P_1\cup n_2P_2\cup n_3P_3\cup n_4P_4\cup n_5P_5$ where $n_1+n_2+\cdots+n_5\geq3$.
 From Lemma \ref{lem-z}, the character equation Eq. (\ref{Eq-t-u}) becomes
 \begin{equation}\label{h-eq-1}
\begin{array}{ll}
&xm(x)-(n_1(x^2-1)(x^2-2)(x^2-x-1)(x^2+x-1)(x^2 - 3)\\
&+n_2x^2(x^2\!-\!2)(x^2\!-\!x\!-\!1)(x^2\!+\!x\!-\!1)(x^2\! - \! 3)+n_3(x^2\!-\!1)^2(x^2\!-\!x\!-\!1)(x^2\!+\!x\!-\!1)(x^2\! - \!3)\\
&+n_4x^2(x^2\!-\!1)(x^2\!-\!2)^2(x^2\!- \!3)+n_5(x^2\!-\!2)(x^2\!-\!x\!-\!1)^2(x^2\!+\!x\!-\!1)^2)\\
&=x^{z_1}(x^2-1)^{z_2}(x^2-2)^{z_3}((x^2-x-1)(x^2+x-1))^{z_4}(x^2-3)^{z_5}(x^2-c),\\
\end{array}
\end{equation}
and the corresponding parameter equation becomes
\begin{equation}\label{z-eq-10}
z_1+2z_2+2z_3+4z_4+2z_5+2=12.
\end{equation}
Using the restriction condition (\ref{eq-z}), 
it is routine to  find all the fifteen solutions  $(z_1,z_2,z_3,z_4,z_5)$ of  (\ref{z-eq-10}) which we  list in Table \ref{table-quadratic-I-invalid} and Table \ref{table-quadratic-I}. From these solutions   we can get $(n_1,n_2,n_3,n_4,n_5)$ and then determine  all the quadratic starlike trees of form (I). In what follows we prove our result by two steps.

{\flushleft\bf Step 1. } All the solutions $(z_1,z_2,z_3,z_4,z_5)$ listed   in  Table \ref{table-quadratic-I-invalid} are  invalid.

We now confirm our conclusion in detail by taking the solution $(z_1,z_2,z_3,z_4,z_5)=(2,2,1,0,1)$ at the first row of Table \ref{table-quadratic-I-invalid}.  According to the restriction (\ref{eq-z}) in Lemma \ref{lem-z}, we have $(n_1,n_2,n_3,n_4,n_5)=(0,0,0,n_4,0)$. Thus, the Eq. (\ref{h-eq-1}) becomes
$xm(x)-n_4x^2(x^2-1)(x^2-2)^2(x^2 - 3)
=x^{2}(x^2-1)^{2}(x^2-2)(x^2-3)(x^2-c)$.
By deleting $(x^2-1)(x^2-2)(x^2-3)$ on two sides, we get the simplification of the character equation $$(x^2-x-1)(x^2+x-1)-n_4(x^2-2)=(x^2-1)(x^2-c).$$
By comparing the coefficients, we get the restriction condition
$$\left\{\begin{array}{ll}
n_4+3=c+1\\
2n_4+1=c\\
\end{array}\right.
\Longrightarrow
\left\{\begin{array}{ll}
n_4=1\\c=3
\end{array}\right.$$
However, $1=n_1+n_2+n_3+n_4+n_5 =d_T(u)\geq3$, a contradiction.  The solution is invalid.

As the same arguments as above, from the solutions  $(z_1,z_2,z_3,z_4,z_5)$ we can determine $(n_1,n_2,n_3,n_4,n_5)$, which leads to  a  simplification of the character equation, by comparing the  coefficients, we find the restriction conditions which are conflicting. In this way, one can verify that  all these solutions are invalid (to see 2-5 columns of Table \ref{table-quadratic-I-invalid} for details ).
\begin{table}[htpb]
\scriptsize
\caption{\small Invalid solutions of quadratic starlike tree of form (I)}
\centering
\renewcommand\arraystretch{1.3}
\begin{tabular*}{15cm}{m{3pt}|m{50pt}|m{50pt}|m{180pt}|m{85pt}}
\hline
 &$(z_1,z_2,z_3,z_4,z_5)$ &$(n_1,n_2,n_3,n_4,n_5)$& simplification of the character equation&restriction condition\\\hline
1 & $(2,2,1,0,1)$ & $(0,0,0,n_4,0)$ & $(x^2-x-1)(x^2+x-1)-n_4(x^2-2)=(x^2-1)(x^2-c)$& $n_4=1$
\\\hline
2 &
$(2,1,1,0,2)$ & $(0,0,0,n_4,0)$  & $(x^2-x-1)(x^2+x-1)-n_4(x^2-2)=(x^2-3)(x^2-c)$&$n_4=1$\\\hline
3 &
$(0,2,1,0,2)$ &  $(n_1,0,0,n_4,0)$ & $x^2(x^2-x-1)(x^2+x-1)-(n_1(x^2-x-1)(x^2+x-1)+n_4x^2(x^2-2))=(x^2-1)(x^2-3)(x^2-c)$&$n_1=0$, $n_4=1$\\\hline
4 &
$(0,0,2,1,1)$ & $(n_1,n_2,0,0,0)$  & $x^2(x^2\!-\!1)\!-\!(n_1(x^2\!-\!1)+\!n_2x^2)
=(x^2-2)(x^2-c)$&$n_1=2c$,  $n_2=1-c$\\\hline
5 &
$(0,0,1,1,2)$ & $(n_1 ,n_2,0,0,0)$  & $x^2(x^2-1)-(n_1(x^2-1)+n_2x^2)=(x^2-3)(x^2-c)$&$n_1=3c$, $n_2=2(1-c)$\\\hline
6 &
$(0,0,0,2,1)$ & $(n_1,n_2 ,n_3,0,0)$  & $x^2(x^2-1)(x^2-2)-(n_1(x^2-1)(x^2-2)
+n_2x^2(x^2\!-\!2)+n_3(x^2\!-\!1)^2
=(x^2-x-1)(x^2+x-1)(x^2-c)$&$n_1=n_2=c-1$, $n_3=2-c$\\\hline
7 &
$(0,1,0,1,2)$ & $(n_1,0,n_3,0,0)$   & $ x^2(x^2-2)-(n_1(x^2-2)
+n_3(x^2-1)=(x^2-3)(x^2-c)$&$n_1=2c-1$, $n_3=2-c $\\\hline
8 &$(0,2,2,0,1)$ & $(n_1,0,0,n_4,0)$ & $x^2(x^2-x-1)(x^2+x-1)-(n_1(x^2-x-1)(x^2+x-1)+n_4x^2(x^2\!-\!2)
=(x^2-1)(x^2-2)(x^2-c)$& $n_1=2$, $n_4=-1$\\\hline
9 &
$(2,0,2,0,2)$ & $(0,n_2,0,n_4,0)$  &$(x^2\!-\!1)(x^2\!-\!x\!-\!1)(x^2\!+\!x\!-\!1)-(n_2(x^2\!-\!x\!-\!1)(x^2\!+\!x\!-\!1)
+n_4(x^2\!-\!1)(x^2\!-\!2))
=(x^2-2)(x^2-3)(x^2-c)$ &$n_2=13$, $n_4=-\frac{29}{2}$\\\hline
10 &
$(0,1,2,0,2)$ & $(n_1 ,0,0,n_4,0)$  & $x^2(x^2-x-1)(x^2+x-1)-(n_1(x^2-x-1)(x^2+x-1)
+n_4x^2(x^2\!-\!2))
=(x^2-2)(x^2-3)(x^2-c)$&$n_1=2$, $n_4=\frac{1}{3}$\\\hline
11 &$(2,1,2,0,1)$ &$(0,0,0,n_4,0)$ & $(x^2-x-1)(x^2+x-1)-n_4(x^2\!-\!2)=(x^2-2)(x^2-c)$&no solution \\\hline
\end{tabular*}\label{table-quadratic-I-invalid}
\end{table}

{\flushleft\bf Step 2. } All  the solutions $(z_1,z_2,z_3,z_4,z_5)$ listed  in  Table \ref{table-quadratic-I} are valid.

By taking $(z_1,z_2,z_3,z_4,z_5)=(0,0,1,2,0)$ in 1th row of Table \ref{table-quadratic-I}, we get $n_3=n_4=0$ according to (\ref{eq-z}) in Lemma \ref{lem-z}. Thus, the Eq. (\ref{h-eq-1}) becomes
 $$\begin{array}{ll}
&xm(x)-(n_1(x^2-1)(x^2-2)(x^2-x-1)(x^2+x-1)(x^2-3)\\
&+n_2x^2(x^2-2)(x^2-x-1)(x^2+x-1)(x^2 - 3)
+n_5(x^2-2)(x^2-x-1)^2(x^2+x-1)^2)\\
&=(x^2-2)((x^2-x-1)(x^2+x-1))^2(x^2-c),
\end{array}$$
which can be  simplified as
$$x^2(x^2\!-\!1)(x^2\!-\!3)-(n_1(x^2\!-\!1)(x^2\!-\!3)+n_2x^2(x^2 \!-\! 3)
+n_5(x^2\!-\!x\!-\!1)(x^2\!+\!x\!-\!1))
\!=\!(x^2\!-\!x\!-\!1)(x^2\!+\!x\!-\!1)(x^2\!-\!c).$$
By comparing the coefficients, we get the restriction condition
$$\left\{\begin{array}{ll}
n_1+n_2+n_5+4=c+3;\\
4n_1+3n_2+3n_5+2=3c+1;\\
3n_1+n_5=c,\\
\end{array}\right.$$
which leads to $(n_1,n_2,n_3,n_4,n_5)=(1,1, 0,0,c-3)$ with  the simple condition $n_1=n_2=1$ and $n_5=c-3\ge 1$. Thus $T=T_{1,1,0,0,n_5}$ is a quadratic starlike tree  with characteristic polynomial
 $$\begin{array}{ll}
f_T(x)&=u_{(1,1,0,0,n_5)}(x)\frac{f_{T-u}(x)}{m(x)}\\
&=(x^2-2)((x^2-x-1)(x^2+x-1))^2(x^2-c)\frac{(x^2-3)^{n_5}(x^2-1)^{n_5+1}x^{n_5+1}}{m(x)}\\
&=x^{n_5}(x^2-1)^{n_5}(x^2-x-1)(x^2+x-1)(x^2-3)^{n_5-1}(x^2-c)\\
&=x^{n_5}(x^2-1)^{n_5}(x^2-x-1)(x^2+x-1)(x^2-3)^{n_5-1}(x^2-(n_5+3)).
\end{array}$$
Similarly, for the solutions  $(z_1,z_2,z_3,z_4,z_5)=(1,2,0,1,1), (2, 0,1,1,1)$ and $ (0,1,1,1,1)$ in 2-4 rows of Table \ref{table-quadratic-I}, we can simplify the  character equations respectively
$$\begin{array}{ll}
&x^2(x^2-2)-(n_1(x^2-2)+n_3(x^2-1))=(x^2-1)(x^2-c)\\
&(x^2-1)-n_2=(x^2-c)\\
&x^2-n_1=(x^2-c)\\
\end{array}\Longrightarrow \begin{array}{ll}
&n_1=1, n_3=c-2\ge2\\
&n_2=c-1\ge3\\
&n_1=c\ge4\\
\end{array}
$$
Thus, we can determine the quadratic starlike trees $T_{1,0,n_3}( n_3\ge 2), T_{0,n_2}( n_2\ge 3)$ and $T_{n_1}( n_1\ge 4)$ along with their characteristic polynomials listed in  Table \ref{table-quadratic-I}, respectively. Therefore, the above four  starlike trees are all quadratic of form (I).

We complete this proof.
\end{proof}

\begin{table}[h]
\scriptsize
\caption{\small Quadratic starlike trees  of form (I)}
\centering
\renewcommand\arraystretch{1.4}
\begin{tabular*}{15cm}{m{3pt}|m{50pt}|m{52pt}|m{65pt}|m{190pt}}
\hline
 &$(z_1,z_2,z_3,z_4,z_5)$ &$(n_1,n_2,n_3,n_4,n_5)$& ~~~~~~~$T$&~~~~~~$f_T(x)$\\\hline
1 &$(0,0,1,2,0)$ & $(1,1,0,0, c-3)$ & $T_{1,1,0,0,n_5}(n_5\geq1)$& $
x^{n_5}(x^2-1)^{n_5}(x^2-x-1)(x^2+x-1)(x^2-3)^{n_5-1}(x^2-(n_5+3))$\\\hline
2 &$(0,2,0,1,1)$ & $(1,0,c-2,0,0)$ & $T_{1,0,n_3}~~(n_3\geq2)$& $
x^{n_3}(x^2-1)(x^2-2)^{n_3-1}(x^2-(n_3+2))$\\\hline
3 &
$(2,0,1,1,1)$ & $(0,c-1,0, 0,0)$ & $T_{0,n_2}~~(n_2\geq3)$& $
x(x^2-1)^{n_2-1}(x^2-(n_2+1))$\\\hline
4 &
$(0,1,1,1,1)$ & $(c,0,0,0,0)$ & $T_{n_1}~~(n_1\geq4)$& $
x^{n_1-1}(x^2-n_1)$\\\hline
\end{tabular*}\label{table-quadratic-I}
\end{table}
\begin{remark}
When $n_1$ and $n_2+1$ are square numbers, the starlike graphs listed in 3th and 4th rows of Table \ref{table-quadratic-I} are  integral which is the main result in \cite{Watanabe}.
\end{remark}

\subsection{The quadratic starlike trees of form (II)}

In this subsection, we will determine  quadratic starlike tree  $T$ of  form (II). Unlike Lemma \ref{starlike-P6}, we can not directly show that $\lambda_2(T)\le \sqrt{3}$ for $T$ of form (II). To excluded $P_k(k\ge 6)$ in $T-u$ we have to use other method. First of all, we will introduce some notion, symbols and give some lemmas for  the preparations.   By simple calculations,
\begin{equation}\label{f-eq-1}\begin{array}{ll}
f_{P_1}(x)=x; \  \ f_{P_2}(x)=x^2-1; \ \ f_{P_3}(x)=x(x^2-2);\\
f_{P_4}(x)=(x^2-x-1)(x^2+x-1);\ \  f_{P_5}(x)=x(x - 1)(x + 1)(x^2 - 3);\\
f_{P_6}(x)=(x^3 - x^2 - 2x + 1)(x^3 + x^2 - 2x - 1);\\
f_{P_7}(x)=x(x^2 - 2)(x^4 -4x^2+2);\\
f_{P_8}(x)=(x - 1)(x + 1)(x^3 -3x + 1)(x^3 -3x - 1);\\
f_{P_9}(x)=x(x^2-x-1)(x^2+x-1)(x^4 -5x^2+5);\\
f_{P_{10}}(x)=(x^5+x^4-4x^3-3x^2 +3x + 1)(x^5-x^4-4x^3 +3x^2 +3x-1);\\
f_{P_{11}}(x)=x(x - 1)(x + 1)(x^2 - 2)(x^2 - 3)(x^4 -4x^2+1);\\
f_{P_{12}}(x)=(x^6-x^5-5x^4+4x^3+6x^2 -3x\! -\! 1)(x^6+x^5-5x^4-4x^3+6x^2 +3x\! - \! 1);\\
f_{P_{13}}(x)=x(x^3 - x^2 - 2x + 1)(x^3 + x^2 - 2x - 1)(x^6-7x^4+14x^2-7).
\end{array}
\end{equation}
Now we assume that $T-u=n_1P_1\cup n_2 P_2\cup\cdots\cup n_5P_5\cup P_k$, where $n_1+n_2+n_3+n_4+n_5\ge 2$ and $6\le k\le 13$. In this situation we specify $T$ as $T_k$  to related with $P_k$ and denote by $c_k(x)$ the product of all the irreducible factors of $f_{T_k-u}(x)$.  Then $f_{P_k}(x)|c_k(x)$, and $f_{P_i}(x)|c_k(x)$ where $P_i\in T_k-u-P_k$. Denote by $S_k$ the  set of basis factors in $c_{k}(x)$ and clearly $S_k\subseteq S$.  $c_{S_k}(x)=\prod_{p(x)\in S_k} p(x)$ is defined to be the elementary part of $c_{k}(x)$ and $q_k(x)=\frac{c_{k}(x)}{c_{S_k}(x)}$ the non-elementary part. Then $c_{k}(x)=c_{S_k}(x)\cdot q_k(x)$ and $q_k(x)|f_{P_k}(x)$ ( for instance, $q_6(x)=f_{P_6}(x)$ and $q_7(x)=(x^4 -4x^2+2)$, $x(x^2-2)|c_{S_7}(x)$ and so on ). By Theorem \ref{star-thm-1} and Lemma \ref{starlike-poly} we have
\begin{equation}\label{P-eq-3}\begin{array}{ll}
&x^{t_1}(x^2\!-\!1)^{t_2} (x^2\!-\!2)^{t_3}((x^2\!-\!x\!-\!1)(x^2\!+\!x\!-\!1))^{t_4}(x^2\!-\!3)^{t_5}(x^2\!-\!ax\!+\!b)(x^2\!+\!ax\!+\!b)\\
&=f_{T_k}(x)\\
&=xf_{T_k-u}(x)-(n_1f_{P_0}(x)\frac{f_{T_k-u}(x)}{f_{P_1}(x)}+\cdots+n_5f_{P_4}(x)\frac{f_{T_k-u}(x)}{f_{P_5}(x)}+f_{P_{k-1}}(x)\frac{f_{T_k-u}(x)}{f_{P_{k}}(x)})\\
&=\frac{f_{T_k-u}(x)}{c_k(x)}[xc_k(x)-(n_1f_{P_0}(x)\frac{c_k(x)}{f_{P_1}(x)}+\cdots+ n_5f_{P_4}(x)\frac{c_k(x)}{f_{P_5}(x)}+f_{P_{k-1}}(x)\frac{c_k(x)}{f_{P_{k}}(x)})]\\
&=\frac{f_{T_k-u}(x)}{c_k(x)}\cdot h_k(x),
\end{array}
\end{equation}
where $h_k(x)=xc_k(x)-(n_1f_{P_0}(x)\frac{c_k(x)}{f_{P_1}(x)}+\cdots + n_5f_{P_4}(x)\frac{c_k(x)}{f_{P_5}(x)}+f_{P_{k-1}}(x)\frac{c_k(x)}{f_{P_{k}}(x)})$ is a polynomial with degree $\partial(h_k(x))=\partial(c_k(x))+1$. It is clear that $(x^2-ax+b)(x^2+ax+b)|h_k(x)$. Let $h_k^*(x)=\frac{h_k(x)}{(x^2-ax+b)(x^2+ax+b)}$. We see that $h_k^*(x)$ contains only the basis factors in $S$ and   all such factors are collected in the set $S_k'$. Thus $h_k^*(x)=\prod_{p(x)\in S_k'}p(x)$  is specified as  $h_k^*(x)=h_{S_k'}(x)$. For  a basis factor  $s(x)\in S$, let $s_P=\{ P_i\mid s(x)|f_{P_i}(x), 1\le i\le 5\}$. For instance, let $s(x)=x^2-1,t(x)=x\in S$ then $s_P=\{ P_2,P_5\}$ and $t_P=\{P_1,P_3,P_5\}$.
\begin{lem}\label{starlike-P6-20}
Under the above assumptions, for $f_{T_k}(x)=\frac{f_{T_k-u}(x)}{c_k(x)}h_k(x)$ we have\\
(i) $\frac{f_{T_k-u}(x)}{c_k(x)}$ only contains basis factors in $S$.\\
(ii) $h_k(x)=h_{S_k'}(x)\cdot (x^2-ax+b)(x^2+ax+b)$, and we have $s(x)\parallel f_{T_k}(x)$ if $s(x)\in S_k'$.\\
(iii) For $s(x)\in S$, if $s(x)|\frac{f_{T_k-u}(x)}{c_k(x)}$ then $T_k-u$ contains at least two pathes in $s_P$ ( repeating is permitted  ) .\\
(iv) For $s(x)\in S$, if $s(x)|h_k(x)$ then  $P_i\not\in T_k-u$ for any  $P_i \in s_P$.\\
(v) $gcd(\frac{f_{T_k-u}(x)}{c_k(x)}, h_k(x))=1$, $gcd(c_k(x), h_k(x))=1$ and $gcd(c_{S_k}(x), h_{S_k'}(x))=1$.
\end{lem}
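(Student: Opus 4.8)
The proof will rest almost entirely on one bookkeeping principle: for a basis factor $s(x)\in S$ with root $\lambda$, compare the multiplicity of $s(x)$ on the two sides of the factorization $f_{T_k}(x)=\tfrac{f_{T_k-u}(x)}{c_k(x)}\,h_k(x)$ recorded in (\ref{P-eq-3}). Since each $f_{P_i}(x)$ ($1\le i\le 5$) and $f_{P_k}(x)$ is squarefree (paths have simple eigenvalues by Lemma \ref{lem-path-cycle-polynomial}), the multiplicity of $s(x)$ in $f_{T_k-u}(x)=\prod_{i=1}^5 f_{P_i}^{n_i}(x)\cdot f_{P_k}(x)$ equals $\sum_{P_i\in s_P}n_i+[\,s(x)\mid f_{P_k}(x)\,]$, that is, the number of path-components of $T_k-u$ (counted with multiplicity) whose characteristic polynomial is divisible by $s(x)$; its multiplicity in the radical $c_k(x)$ is $1$ whenever $s(x)\mid f_{T_k-u}(x)$ and $0$ otherwise. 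For the paired basis factor $(x^2-x-1)(x^2+x-1)$ one applies this to each of its roots, which have equal multiplicity by the symmetry of a tree's spectrum.

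First I would settle (i) and the non-basis content. By (\ref{f-eq-1}) the irreducible factors of $f_{P_i}(x)$ for $i\le 5$ are all basis factors, so every non-elementary irreducible factor of $f_{T_k-u}(x)$ arises solely from the single long path $f_{P_k}(x)$ and hence with multiplicity exactly $1$; it is therefore cancelled by $c_k(x)$, leaving $\tfrac{f_{T_k-u}(x)}{c_k(x)}$ with only basis factors, which is (i). Because $f_{T_k}(x)$ has form (II), its only non-basis part is $(x^2-ax+b)(x^2+ax+b)$ with each of these simple (Theorem \ref{star-thm-1}); since the product $f_{T_k}=\tfrac{f_{T_k-u}}{c_k}h_k$ involves no cancellation and the first factor carries no non-basis factor, the non-basis part of $h_k(x)$ must be exactly $(x^2-ax+b)(x^2+ax+b)$, so $h_k^*(x)=h_k(x)/[(x^2-ax+b)(x^2+ax+b)]$ is a product of basis factors.

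The heart of the matter is (iv), from which (ii), (iii) and (v) follow. If $s(x)\mid f_{T_k-u}(x)$ then $\lambda$ is an eigenvalue of $T_k-u$, so Lemma \ref{starlike-mul} gives $m(T_k;\lambda)=m(T_k-u;\lambda)-1$; equivalently the multiplicity of $s(x)$ in $f_{T_k}(x)$ equals its multiplicity in $f_{T_k-u}(x)$ minus one, which is precisely its multiplicity in $\tfrac{f_{T_k-u}(x)}{c_k(x)}$. Hence $s(x)$ contributes nothing to $h_k(x)$, and the contrapositive is exactly (iv): $s(x)\mid h_k(x)$ forces $s(x)\nmid f_{T_k-u}(x)$, so no $P_i\in s_P$ lies in $T_k-u$. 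Claim (iii) is the complementary reading of the same count: $s(x)\mid \tfrac{f_{T_k-u}(x)}{c_k(x)}$ means its multiplicity in $f_{T_k-u}(x)$ is at least $2$, i.e. at least two path-copies of $T_k-u$ have $s(x)$ dividing their characteristic polynomial. For the squarefreeness in (ii), take $s(x)\in S_k'$, so $s(x)\mid h_k(x)$; by (iv) $\lambda$ is not an eigenvalue of $T_k-u$, whence Corollary \ref{multi} (with $G=T_k$, $v=u$) gives $m(T_k;\lambda)\le 1$. Therefore $s(x)\parallel f_{T_k}(x)$, and since $s(x)\nmid \tfrac{f_{T_k-u}(x)}{c_k(x)}$ also $s(x)\parallel h_k(x)$; thus $h_k^*(x)=h_{S_k'}(x)$ is squarefree, finishing (ii).

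Finally (v) is assembled from the above. Any common factor of $\tfrac{f_{T_k-u}(x)}{c_k(x)}$ and $h_k(x)$ is a basis factor by (i), but a basis factor dividing $h_k(x)$ is absent from $f_{T_k-u}(x)$ by (iv), hence from $\tfrac{f_{T_k-u}(x)}{c_k(x)}$; this gives the first gcd, and the same remark yields $\gcd(c_{S_k}(x),h_{S_k'}(x))=1$ because $c_{S_k}(x)$ collects exactly the basis factors dividing $f_{T_k-u}(x)$. For $\gcd(c_k(x),h_k(x))=1$ only the non-basis parts remain: the non-elementary part $q_k(x)$ of $c_k(x)$ divides $f_{P_k}(x)$, all of whose roots lie in $(-2,2)$, whereas the larger root of $x^2-ax+b$ is $\lambda_1(T_k)\ge 2$ by Lemma \ref{starlike-rad} and so exceeds every root of $f_{P_k}(x)$; thus $(x^2-ax+b)(x^2+ax+b)$ shares no root with $c_k(x)$, and combined with the basis-factor case this gives the claim. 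The one step that is not pure multiplicity bookkeeping is precisely this eigenvalue-location argument, and that is where I expect the genuine (though short) work to lie.
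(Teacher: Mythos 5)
Your proof is correct and reaches the same conclusions, but it replaces the paper's key computation with a cleaner multiplicity count. For part (iv) the paper argues directly on the defining sum for $h_k(x)$: isolating the term $n_if_{P_{i-1}}(x)\frac{c_k(x)}{f_{P_i}(x)}$, it shows $s(x)$ divides every other summand but not this one (using that $s(x)\parallel c_k(x)$ and $\gcd(f_{P_{i-1}},f_{P_i})=1$), whence $s(x)\nmid h_k(x)$. You instead deduce (iv) from Lemma \ref{starlike-mul}: the multiplicity of $s(x)$ in $f_{T_k}(x)$ is $m(T_k-u;\lambda)-1$, which is exactly its multiplicity in $\frac{f_{T_k-u}(x)}{c_k(x)}$, so nothing is left for $h_k(x)$. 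This buys two things: it avoids the coprimality check on consecutive path polynomials, and it yields the stronger statement $s(x)\mid h_k(x)\Rightarrow s(x)\nmid f_{T_k-u}(x)$ (covering the case $s(x)\mid f_{P_k}(x)$ as well, which the literal statement of (iv) in terms of $s_P$ does not address), and that stronger form is precisely what your proof of (ii) then uses via Corollary \ref{multi} — repairing the slightly circular phrasing in the paper's own proof of (ii), which appeals to "the definition of $h_k(x)$" for a fact that is really (iv). Your treatment of (i) and (iii) coincides with the paper's. For (v) the paper writes only that it "follows immediately from (iv)", which is too quick for $\gcd(c_k(x),h_k(x))=1$: one must also rule out a common factor between $q_k(x)$ and $(x^2-ax+b)(x^2+ax+b)$, and your root-location argument (all roots of $q_k(x)\mid f_{P_k}(x)$ lie in $(-2,2)$ by Lemma \ref{lem-path-cycle-polynomial} while $x^2-ax+b$ has the root $\lambda_1(T_k)\ge 2$ by Lemma \ref{starlike-rad}) is exactly the missing detail; just make explicit that irreducibility of $x^2\pm ax+b$ is what transfers the exclusion of the large root to its conjugate and its negatives.
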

\begin{proof}
Note that $f_{T_k-u}(x)=f_{P_1}^{n_1}(x)f_{P_2}^{n_2}(x)f_{P_3}^{n_3}(x)f_{P_4}^{n_4}(x)f_{P_5}^{n_5}(x)\cdot f_{P_k}(x)$ and $q_k(x)\|f_{P_k}(x), q_k(x)\|c_k(x)$, $\frac{f_{T_k-u}(x)}{c_k(x)}$   only contains basis factors in $S$. Thus (i) follows.

From (\ref{P-eq-3}) and (i), we have $(x^2-ax+b)(x^2+ax+b)|h_k(x)$ and other factor $s(x)$ of $h_k(x)$ ( if any ) is basis factor in $S$. Moreover if $s^2(x)| f_{T_k}(x)$ then the root $\lambda$ of $s(x)$ is multiple and so $\lambda$ is a root of $f_{T_k-u}(x)$ by Lemma \ref{starlike-mul}. However, by the definition $h_k(x)$ does not contain any root of $f_{P_j}(x)$ for $P_j\in T_k-u$, it is a contradiction. Thus (ii) follows.

If $s(x)|\frac{f_{T_k-u}(x)}{c_k(x)}$ then $s^2(x)| f_{T_k-u}(x)$ and thus $T_k-u$ contains  two pathes ( repeating is permitted  ) whose characteristic polynomials have the factor  $s(x)$. (iii) follows.

On the contrary suppose that $P_i\in T_k-u$ such that  $s(x)| f_{P_i}(x)$. We claim that such $P_i$ must be unique in $s_P$ since otherwise $s(x)$ is a multiple factor of $f_{T_k-u}(x)$ and so $s(x)|\frac{f_{T_k-u}(x)}{c_k(x)}$, thus $s(x)^2|f_{T_k}(x)=\frac{f_{T_k-u}(x)}{c_k(x)}h_k(x)$, it contradicts (ii).  Notice that \begin{equation}\label{111}\begin{array}{ll}
h_k(x)&=xc_k(x)-(n_1f_{P_0}(x)\frac{c_k(x)}{f_{P_1}(x)}+\cdots + n_5f_{P_4}(x)\frac{c_k(x)}{f_{P_5}(x)}+f_{P_{k-1}}(x)\frac{c_k(x)}{f_{P_{k}}(x)})\\
&=s(x)u(x)-n_if_{P_{i-1}}(x)\frac{c_k(x)}{f_{P_i}(x)}.
\end{array}
\end{equation}
From (\ref{111}) we deduce that $s(x)\nmid h_k(x)$. It is a contradiction. (iv) follows.

(v) follows immediately from (iv).
\end{proof}

For $6\le k\le 13$, we have $c_k(x)=c_{S_k}(x)\cdot q_k(x)$ and $h_k(x)=h_{S_k'}(x)\cdot (x^2-ax+b)(x^2+ax+b)$ defined above, where $S_k$ and $S_k'$ are disjoint subset of $S$ according to Lemma \ref{starlike-P6-20} (v).
Now we refer to $S_k\cup S_k'$ as a pair of feasible  factor set ( with respect to $T_k-u=n_1P_1\cup n_2 P_2\cup\cdots\cup n_5P_5\cup P_k$ ).
\begin{lem}\label{S-lem-1}
For a pair of feasible factor set $S_k\cup S_k'$ we have\\
(i) $S_k\cup S_k'\subseteq S$ and $gcd(s_k(x),s_k'(x))=1$ for any $s_k(x)\in S_k$ and $s_k'(x)\in S_k'$.\\
(ii) $\partial(h_{S_k'}(x))-\partial(c_{S_k}(x))=\partial(q_k(x))-3$.\\
(iii) $1\le\partial(c_{S_k}(x))\le 7-\frac{\partial(q_k(x))}{2}$ and  $\partial(h_{S_k'}(x))\le 4+\frac{\partial(q_k(x))}{2}$.
\end{lem}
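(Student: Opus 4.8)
The plan is to treat the three parts in order, since each feeds into the next and the whole argument is essentially a degree count resting on Lemma \ref{starlike-P6-20}. For (i), the inclusion $S_k\cup S_k'\subseteq S$ is immediate from the definitions: $S_k$ is the set of basis factors dividing $c_k(x)$ and $S_k'$ is the set of basis factors dividing $h_k(x)$, and every basis factor lies in $S$ by construction. For the coprimality I would invoke Lemma \ref{starlike-P6-20}(v), which gives $\gcd(c_{S_k}(x),h_{S_k'}(x))=1$; since any $s_k(x)\in S_k$ divides $c_{S_k}(x)$ and any $s_k'(x)\in S_k'$ divides $h_{S_k'}(x)$, it follows that $\gcd(s_k(x),s_k'(x))$ divides $1$. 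Equivalently, $S_k\cap S_k'=\emptyset$ and distinct basis factors, having disjoint root sets, are pairwise coprime.

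For (ii), I would simply compare degrees in the two factorizations $c_k(x)=c_{S_k}(x)\,q_k(x)$ and $h_k(x)=h_{S_k'}(x)\,(x^2-ax+b)(x^2+ax+b)$, the latter being Lemma \ref{starlike-P6-20}(ii). Recalling from the line following (\ref{P-eq-3}) that $\partial(h_k(x))=\partial(c_k(x))+1$, these yield
\[
\partial(h_{S_k'}(x))+4=\partial(c_{S_k}(x))+\partial(q_k(x))+1,
\]
which rearranges at once to the claimed identity $\partial(h_{S_k'}(x))-\partial(c_{S_k}(x))=\partial(q_k(x))-3$.

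For (iii), the lower bound uses the standing hypothesis $n_1+\cdots+n_5\ge 2$: then $T_k-u$ contains at least one pendant path $P_i$ with $1\le i\le 5$, whose characteristic polynomial is a product of basis factors by (\ref{f-eq-1}), so $S_k\neq\emptyset$ and hence $\partial(c_{S_k}(x))\ge 1$, the minimum being attained by the single factor $x$. The upper bounds come from combining (i) with the total degree of the ambient product: since $S_k$ and $S_k'$ are disjoint subsets of $S$, the product $c_{S_k}(x)\,h_{S_k'}(x)=\prod_{p(x)\in S_k\cup S_k'}p(x)$ divides $m(x)=\prod_{p(x)\in S}p(x)$, whence $\partial(c_{S_k}(x))+\partial(h_{S_k'}(x))\le \partial(m(x))=11$. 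Substituting the identity from (ii) in the form $\partial(h_{S_k'}(x))=\partial(c_{S_k}(x))+\partial(q_k(x))-3$ gives $2\,\partial(c_{S_k}(x))+\partial(q_k(x))-3\le 11$, i.e. $\partial(c_{S_k}(x))\le 7-\tfrac{1}{2}\partial(q_k(x))$; feeding this back into the same identity yields $\partial(h_{S_k'}(x))\le 4+\tfrac{1}{2}\partial(q_k(x))$.

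The argument is almost entirely bookkeeping once Lemma \ref{starlike-P6-20} is in hand; the only place demanding a little care is the total-degree inequality $\partial(c_{S_k}(x))+\partial(h_{S_k'}(x))\le 11$, which hinges on the disjointness of $S_k$ and $S_k'$ established in (i) together with the value $\partial(m(x))=11$. I expect this to be the crux—everything else is a direct degree comparison.
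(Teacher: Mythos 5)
Your proposal is correct and follows essentially the same route as the paper: part (i) from Lemma \ref{starlike-P6-20}(v), part (ii) by the degree comparison $\partial(h_k(x))=\partial(c_k(x))+1$ written as $\partial(h_{S_k'}(x))+4=\partial(c_{S_k}(x))+\partial(q_k(x))+1$, and part (iii) from $\partial(c_{S_k}(x))+\partial(h_{S_k'}(x))\le\partial(m(x))=11$ combined with (ii). Your explicit justification of the lower bound $\partial(c_{S_k}(x))\ge 1$ via $n_1+\cdots+n_5\ge 2$ is a detail the paper leaves implicit, but it is the intended reasoning.
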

\begin{proof}
(i) follows from  Lemma \ref{starlike-P6-20}(v).

From definition, we have
$1=\partial(h_k(x))-\partial(c_k(x))=(\partial(h_{S_k'}(x))+4)-(\partial(c_{S_k}(x))+\partial(q_k(x))$, which leads to (ii).

From (i) we have $\partial(h_{S_k'}(x))+\partial(c_{S_k}(x))\leq \partial(\prod_{s(x)\in S}s(x))=11$, and combining with (ii) we get (iii).
\end{proof}

\begin{lem}\label{starlike-P6-22}
Let $T \supset K_{1,3} $ be a quadratic starlike tree with the center vertex $u$. If $f_{T}(x)$ is of  form (II) then $T-u$ can not  contain  any path $P_k$ with   $k>5$.
\end{lem}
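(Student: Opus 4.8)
The plan is to argue by contradiction: suppose $T-u$ contains a path $P_k$ with $k>5$. By Lemmas \ref{starlike-two-path} and \ref{starlike-two-path-distinct} such a path is unique, and Corollary \ref{starlike-the-third-largest}(ii) gives $d(T)\le 14$, which forces $6\le k\le 13$. Hence $T=T_k$ with $T_k-u=n_1P_1\cup n_2P_2\cup\cdots\cup n_5P_5\cup P_k$ and $n_1+\cdots+n_5\ge 2$, and I may use the decomposition (\ref{P-eq-3}) together with the factorizations $c_k(x)=c_{S_k}(x)q_k(x)$ and $h_k(x)=h_{S_k'}(x)(x^2-ax+b)(x^2+ax+b)$ furnished by Lemmas \ref{starlike-P6-20} and \ref{S-lem-1}. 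The whole argument is then a finite elimination: for each admissible $k$ I pin down the feasible pairs $(S_k,S_k')$ and show that the character equation $h_k(x)=h_{S_k'}(x)(x^2-ax+b)(x^2+ax+b)$ has no admissible solution.

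First I would read off the non-elementary parts from (\ref{f-eq-1}): $\partial(q_k)=6,4,6,4,10,4,12,12$ for $k=6,\dots,13$ respectively. The key leverage is Lemma \ref{S-lem-1}(iii), namely $1\le \partial(c_{S_k})\le 7-\tfrac12\partial(q_k)$, combined with the fact that every basis factor occurring in $f_{P_k}(x)$ is automatically forced into $S_k$ (since $f_{P_k}(x)\mid c_k(x)$). For $k=11$ the path $P_{11}$ already contributes the basis factors $x,\,x^2-1,\,x^2-2,\,x^2-3$ of total degree $7$, whereas the bound only allows $\partial(c_{S_{11}})\le 5$; this kills $k=11$ outright. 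For $k=12$ and $k=13$ the bound gives $\partial(c_{S_k})=1$, so $S_k=\{x\}$, whence $n_2=n_3=n_4=n_5=0$ and $n_1\ge 2$, forcing $S_k'=S\setminus\{x\}$. A direct evaluation, using $f_{P_n}(1)=\sin\frac{(n+1)\pi}{3}\big/\sin\frac{\pi}{3}$, then shows $h_k(1)\ne 0$ for $n_1\ge 2$, so $x^2-1\nmid h_k(x)$, contradicting $x^2-1\in S_k'$.

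For the remaining values $k\in\{6,7,8,9,10\}$ I would enumerate, for each $k$, the finitely many feasible configurations. The set $S_k$ is determined by the support $\{i:n_i>0\}$ via the rule $s(x)\in S_k$ iff $s_P$ meets the support or $s(x)\mid f_{P_k}(x)$, subject to the degree bound above; the complementary set $S_k'\subseteq S\setminus S_k$ is then constrained by $\partial(h_{S_k'})=\partial(c_{S_k})+\partial(q_k)-3$ (Lemma \ref{S-lem-1}(ii)) and by Lemma \ref{starlike-P6-20}(iv). For each surviving pair the identity $h_k(x)=h_{S_k'}(x)(x^2-ax+b)(x^2+ax+b)$ is a polynomial identity of known degree; evaluating both sides at the roots $\pm1,\pm\sqrt2,\pm\sqrt3,\pm\frac{1\pm\sqrt5}{2}$ of the factors in $S_k'$ produces a linear system in the $n_i$, and matching the remaining coefficients determines $a,b$. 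In every case this system turns out to be inconsistent, or to force some $n_i<0$ or non-integral, or to force $n_1+\cdots+n_5<2$. For instance, when $k=9$ the degree bound pins down $S_9=\{x,s_4\}$ and $S_9'=\{x^2-1,x^2-2,x^2-3\}$, and one computes $h_9(1)=n_1+n_4-1$, so $x^2-1\mid h_9$ would demand $n_1+n_4=1<2$, a contradiction.

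The main obstacle is simply the size and bookkeeping of this case analysis: for $k\in\{6,7,8,10\}$ several supports survive the degree bound, each giving one or two admissible $S_k'$, and every resulting linear system must be solved. The one genuinely delicate point is that, should a system admit a non-negative integral solution for the $n_i$, one must still examine the final factor $x^2-ax+b$; it suffices to verify that $a\le 0$, or that $a^2-4b$ is not a positive square-free integer, so that $f_T(x)$ fails to be of the admissible form (II). Carrying this out for all $k$ shows that no $T_k$ with $k>5$ is quadratic of form (II), which completes the proof.
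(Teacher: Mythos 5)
Your framework is the right one and the cases you actually execute are correct, but your route through the elimination differs from the paper's. The paper also reduces to $T_k-u=n_1P_1\cup\cdots\cup n_5P_5\cup P_k$ with $6\le k\le 13$ and leans on the decomposition $f_{T_k}(x)=\frac{f_{T_k-u}(x)}{c_k(x)}h_k(x)$ and the degree bounds of Lemma \ref{S-lem-1}, but its organizing principle is spectral rather than coefficient-based: it writes down the explicit ordering (\ref{O-eq-1}) of the eigenvalues of $T_k-u$, splits into cases according to whether $\sqrt{3}$ and $\frac{1+\sqrt{5}}{2}$ lie in $Spec(T_k)$ and in $Spec(T_k-u)$, and repeatedly invokes the Interlacing Theorem by exhibiting an interval $[\lambda_{i+1}(T_k),\lambda_i(T_k)]$ that contains no eigenvalue of $T_k-u$. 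You replace the interlacing step by evaluating the identity $h_k(x)=h_{S_k'}(x)(x^2-ax+b)(x^2+ax+b)$ at the roots of the basis factors in $S_k'$, turning each sub-case into a small linear system in the $n_i$. Your disposals of $k=11$ (degree overflow: $\partial(c_{S_{11}})\ge 7>5$), of $k=12,13$ ($S_k=\{x\}$ forced, $S_k'=S\setminus\{x\}$ forced, yet $h_{12}(1)=1-n_1$ and $h_{13}(1)=-n_1$ are nonzero), and of $k=9$ ($h_9(1)=n_1+n_4-1\ne 0$) all check out, and for $k=11,12,13$ are arguably shorter than the paper's treatment. The trade-off is that the paper's spectral case split handles several values of $k$ simultaneously inside one case, whereas your method is more mechanical but requires enumerating supports for each $k$ separately.

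The weakness is that for $k\in\{6,7,8,10\}$ you only assert that ``in every case this system turns out to be inconsistent, or to force some $n_i<0$ or non-integral, or to force $n_1+\cdots+n_5<2$.'' That is where the bulk of the work sits: for $k=6$, for instance, the bound $\partial(c_{S_6})\le 4$ still leaves six admissible supports, each with up to four candidates for $S_6'$, and nothing in what you wrote makes the outcome self-evident. The computations do go through --- e.g.\ for $k=6$ with support $\{P_1\}$ one finds $h_6(x)=x^2f_{P_6}(x)-n_1f_{P_6}(x)-xf_{P_5}(x)$, whence $h_6(1)=1-n_1$, $h_6(\sqrt{2})=n_1$ and $h_6\bigl(\tfrac{1+\sqrt{5}}{2}\bigr)=(n_1-1)\tfrac{1+\sqrt{5}}{2}-1$ are all nonzero for $n_1\ge 2$, eliminating every admissible $S_6'$ --- but as written the argument is a verified method plus an unverified claim covering roughly a dozen sub-cases. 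To turn the proposal into a proof you must either tabulate these verifications explicitly or supply a uniform argument (as the paper does via interlacing) that covers the remaining $k$ at once.
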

\begin{proof}
Suppose to  contrary that    we may assume that $T-u=n_1P_1\cup n_2 P_2\cup\cdots\cup n_5P_5\cup P_k$ where  $n_1+n_2+n_3+n_4+n_5> 1$ and  $6\le k\le 13$. We now identify $T$ as  $T_k$ to relate with $P_k$.
According to Corollary \ref{starlike-the-third-largest-1}, we have $\Lambda(T_k) = \{0,\pm1,\pm\sqrt{2}, \pm\frac{1\pm\sqrt{5}}{2},\pm \sqrt{3},\pm\frac{a\pm \sqrt{a^2-4b}}{2}\}$ where $\lambda_1(T_k)=\frac{a+\sqrt{a^2-4b}}{2}$. It is clear that $\lambda_1(T_k-u)=\lambda_1(P_k)$ and $\lambda_2(T_k-u)\ge \lambda_2(P_k)$.

First of all, according to the  proof of   Lemma \ref{starlike-P6} our conclusion holds if $\lambda_2(T_k)\le \sqrt{3}$. In what follows we always assume that $\lambda_2(T_k)> \sqrt{3}$. It implies that $\lambda_2(T_k)=|\bar{\lambda}_1(T_k)|$ where $\bar{\lambda}_1(T_k)$ is  conjugate with $\lambda_1(T_k)$.
Notice that the eigenvalues of $T_k-u$ are included in $\Lambda(T_k-u)=\{0,\pm1,\pm\sqrt{2}, \pm\frac{1\pm\sqrt{5}}{2},\pm \sqrt{3}\}\cup\{\lambda_i(P_k)\mid i=1,2,...,k\}$. We can respectively ordered these eigenvalues of $T_k-u$ for $k=6,7,...,13$.
\begin{equation}\label{O-eq-1}
\scriptsize
\begin{array}{ll}
T_6-u& 0<\lambda_3(P_6)<\frac{-1+\sqrt{5}}{2}<1<\lambda_2(P_6)<\sqrt{2}<\frac{1+\sqrt{5}}{2}<\sqrt{3}<\lambda_1(P_6)<2 \\
T_7-u& 0<\frac{-1+\sqrt{5}}{2}<\lambda_3(P_7)<1<\sqrt{2}=\lambda_2(P_7)<\frac{1+\sqrt{5}}{2}<\sqrt{3}<\lambda_1(P_7)<2\\
T_8-u&0<\lambda_4(P_8)<\frac{-1+\sqrt{5}}{2}<1=\lambda_3(P_8)<\sqrt{2}<\lambda_2(P_8)<\frac{1+\sqrt{5}}{2}<\sqrt{3}<\lambda_1(P_8)<2 \\
T_9-u&0<\frac{-1+\sqrt{5}}{2}=\lambda_4(P_9)<1<\lambda_3(P_9)<\sqrt{2}<\frac{1+\sqrt{5}}{2}=\lambda_2(P_9)<\sqrt{3}<\lambda_1(P_9)<2\\
T_{10}-u&0<\lambda_5(P_{10})<\frac{-1+\sqrt{5}}{2}<\lambda_4(P_{10})<1<\lambda_3(P_{10})<\sqrt{2}<\frac{1+\sqrt{5}}{2}<\lambda_2(P_{10})<\sqrt{3}<\lambda_1(P_{10})<2\\
T_{11}-u&0<\lambda_5(P_{11})<\frac{-1+\sqrt{5}}{2}<1=\lambda_4(P_{11})<\sqrt{2}=\lambda_3(P_{11})<\frac{1+\sqrt{5}}{2}<\sqrt{3}=\lambda_2(P_{11})<\lambda_1(P_{11})<2\\
T_{12}-u& 0<\lambda_6(P_{12})<\frac{-1+\sqrt{5}}{2}<\lambda_5(P_{12})<1<\lambda_4(P_{12})<\sqrt{2}<\lambda_3(P_{12})<\frac{1+\sqrt{5}}{2}<\lambda_2(P_{12})<\sqrt{3}<\lambda_1(P_{12})<2\\
T_{13}-u&0<\lambda_6(P_{13})<\frac{-1+\sqrt{5}}{2}<\lambda_5(P_{13})<1<\lambda_4(P_{13})<\sqrt{2}<\lambda_3(P_{13})<\frac{1+\sqrt{5}}{2}<\sqrt{3}<\lambda_2(P_{13})<\lambda_1(P_{13})<2\\
\end{array}\end{equation}

{\flushleft\bf Case 1.} First we suppose that  $\sqrt{3}\in Spec(T_k)$.

By Interlacing Theorem, from (\ref{O-eq-1}) we see that $\lambda_3(T_k)=\sqrt{3}<\lambda_2(T_k)=|\bar{\lambda}_1(T_k)|< \lambda_{1}(T_k-u)=\lambda_1(P_k)<\lambda_1(T_k)$. We consider the following two subcases bellow.

{\bf Subcase 1.1 } Assume that  $\sqrt{3}\in Spec(T_k-u)$. Then $s(x)=x^2-3$ is a factor of $c_k(x)$, and thus $x^2-3\nmid h_{S_k'}(x)$ by Lemma \ref{starlike-P6-20}(v). It means that $x^2-3\mid \frac{f_{T_k-u}(x)}{c_k(x)}$ due to $\sqrt{3}\in Spec(T_k)$ and so  $|s_P|\geq2$ by Lemma \ref{starlike-P6-20}(iii). Furthermore, from (\ref{f-eq-1}) we see that   $P_5 \in s_P$ because $s_P$ contains $P_{11}$ at most one times, and so $n_5\geq1$.  Thus  $\{x,x^2-1,x^2-3\}\subseteq S_k$ and so $\partial(c_{S_k}(x))\ge 5$. According to Lemma \ref{S-lem-1}(iii), $5\le\partial(c_{S_k}(x))\le 7-\frac{\partial(q_k(x))}{2}$ and thus $\partial(q_k(x))\le 4$, which, from (\ref{f-eq-1}), leads to   $k=7, 9$ or $11$( in fact, $\partial(q_{7}(x))=\partial(q_{9}(x))=\partial(q_{11}(x))=4$ ). On the other hand, we see from (\ref{f-eq-1}) that $\{x,x^2-1,x^2-3\}\subsetneq S_k$ for $k=7, 9, 11$ and  so $5<\partial(c_{S_k}(x))\le 7-\frac{\partial(q_k(x))}{2}$. Thus $\partial(q_{7}(x)),\partial(q_{9}(x)),\partial(q_{11}(x))< 4$. It is a contradiction.

{\bf Subcase 1.2 } Assume that $\sqrt{3}\not\in Spec(T_k-u)$. From (\ref{O-eq-1}), we see that the interval $I=[\lambda_3(T_k)=\sqrt{3}, \lambda_2(T_k)=|\bar{\lambda}_1(T_k)|]$ does not contain any eigenvalue of $T_k-u$ if  $k\not=13$ (notice that $\lambda_2(P_{13})\in I$),  which contradicts  Interlacing Theorem. As a supplement, we need to verify that $k\not=13$. Otherwise, let $k=13$. Then $1\leq \partial(c_{S_{13}}(x))\le 7-\frac{\partial(q_{13}(x))}{2}=1$ by Lemma \ref{S-lem-1}(iii), and thus $S_{13}=\{x\}$. It implies that $T_{13}-u=n_1P_1\bigcup P_{13}$ where $n_1\ge 2$. On the other hand, we have $\partial(h_{S_{13}'}(x))=\partial(c_{S_{13}}(x))+9=10$ by Lemma \ref{S-lem-1}(ii). Thus $S_{13}'=\{x^2-1,x^2-2,x^2-3,(x^2-x-1)(x^2+x-1)\}$ by Lemma \ref{starlike-P6-20}(v), and then $T_{13}$ has eigenvalue $\frac{1+\sqrt{5}}{2}$ by Lemma \ref{starlike-P6-20}(ii). However, from (\ref{O-eq-1}) we see that the interval  $[\frac{1+\sqrt{5}}{2},\sqrt{3}]$ has no any eigenvalue of $T_{13}-u=n_1P_1\bigcup P_{13}$ and its two  endpoints  are eigenvalues of $T_{13}$, which   contradicts Interlacing Theorem.

{\flushleft\bf Case 2.} Next we suppose that  $\sqrt{3}\not\in Spec(T_k)$.

Since $f_{T_k}(x)=\frac{f_{T_k-u}(x)}{c_k(x)}\cdot h_k(x)$, $\sqrt{3}$ is neither a root of $\frac{f_{T_k-u}(x)}{c_k(x)}$ nor of $h_k(x)$.  Thus $x^2-3 \not\in S_k'$, and by Lemma \ref{starlike-P6-20}(iii) at most  one of $P_5$ and $P_{11}$  contains in $T_k-u$.

{\bf Subcase 2.1 } Assume that $\sqrt{3}\in Spec(T_k-u)$.
Then there is exactly one of $P_5$ and $P_{11}$ in $T_k-u$.   First, let  $P_{11}\in T_k-u$. Note that $f_{11}(x)=x(x^2 - 1)(x^2 - 2)(x^2 - 3)(x^4 -4x^2+1)$, we have $\{x, x^2-1,x^2-2,x^2-3\}\subseteq S_{11}$. By Lemma \ref{S-lem-1}(i), $S_{11}'=\{(x^2-x-1)(x^2+x-1)\}$ or $S_{11}'=\emptyset$. By Lemma \ref{starlike-P6-20}(ii), we have $1=\partial(q_k(x))-3=\partial(h_{S_{11}'}(x))-\partial(c_{S_{11}}(x))\le4-7=-3$,  a contradiction.  Next, let  $P_{5}\in T_k-u$. Then $\{x,x^2-1,x^2-3\}\subseteq S_k$, which leads to a contradiction as  in Subcase 1.1.

{\bf Subcase 2.2 } Assume that $\sqrt{3}\not\in Spec(T_k-u)$.
Then $P_5,P_{11} \not\in T_k-u$, and so   $x^2-3\notin S_k$. Therefore, the feasible  factor set $S_k\cup S_k'\subseteq \{x, x^2-1, x^2-2, (x^2-x-1)(x^2+x-1)\}=S\setminus\{x^2-3\}$, and then $\partial(h_{S_k'}(x))+\partial(c_{S_k}(x))\leq9$. By Lemma \ref{S-lem-1}(ii),  $1\le\partial(c_{S_k}(x))\leq 6-\frac{\partial(q_k(x))}{2}$. We will consider the following situations.

First let $\frac{1+\sqrt{5}}{2}\in Spec(T_k)$ and $\frac{1+\sqrt{5}}{2}\in Spec(T_k-u)$. Clearly,  $s(x)=(x^2-x-1)(x^2+x-1)\in S$ has a root $\frac{1+\sqrt{5}}{2}$. By Lemma \ref{starlike-P6-20}(iii) and (iv), we have $\{P_4,P_9\}\subseteq s_P$ or $\{2*P_4\} \subseteq s_P$, and so $n_4\geq1$.  Then $\{(x^2-x-1)(x^2+x-1)\}\subseteq S_k$ and so  $4\le\partial(c_{S_k}(x))\le 6-\frac{\partial(q_k(x))}{2}$ and thus $\partial(q_k(x))\le 4$, which leads to   $k=7,9$ from (\ref{f-eq-1}) and indeed, $\partial(q_{7}(x))=\partial(q_{9}(x))=4$. On the other hand, we see from (\ref{f-eq-1}) that $\{(x^2-x-1)(x^2+x-1)\} \subsetneq S_7,S_9$ and  so $4<\partial(c_{S_7}(x)),\partial(q_{9}(x))$, a contradiction.

Next let $\frac{1+\sqrt{5}}{2}\in Spec(T_k)$ and $\frac{1+\sqrt{5}}{2}\not\in Spec(T_k-u)$. Then $P_4, P_9\notin T_k-u$ and so $s(x)=(x^2-x-1)(x^2+x-1)\notin S_k$. However, since $\frac{1+\sqrt{5}}{2}\in Spec(T_k)$, we have $s(x)\in S_k'$. From (\ref{O-eq-1}), we see that the interval $I=[\lambda_3(T_k), \lambda_2(T_k)]$ does not contain any eigenvalue of $T_k-u$ if  $k\not=10,12,13$. Thus $k=10,12$ or $13$ by  Interlacing Theorem ( notice that $\lambda_2(P_{10})$,  $\lambda_2(P_{12})$, $\lambda_2(P_{13})\in I$ ). However,  note that $\partial(q_{12}(x))=\partial(q_{13}(x))=12$, we have $1\le\partial(c_{S_k}(x))\leq 6-\frac{\partial(q_k(x))}{2}=0$ for $k=12,13$, a contradiction. It remains to consider $k=10$. It is clear that $1\le\partial(c_{S_{10}}(x))\leq 6-\frac{\partial(q_{10}(x))}{2}=1$ due to $\partial(q_{10}(x))=10$. Thus $S_{10}=\{x\}$ and $T_{10}-u=n_1P_1\cup P_{10}$ where $n_1\geq2$. By Lemma \ref{S-lem-1}(ii), we get that $\partial(h_{S_{10}'}(x))=8$.  From Lemma \ref{S-lem-1}(i), $S_{10}'=\{x^2-1,x^2-2, (x^2-x-1)(x^2+x-1)\}$. Then $T_{10}$ has eigenvalues $\sqrt{2}$ and  $\frac{1+\sqrt{5}}{2}$. However, from (\ref{O-eq-1}) we see that the interval  $[\sqrt{2}, \frac{1+\sqrt{5}}{2}]$ has no any eigenvalue of $T_{10}-u=n_1P_1\bigcup P_{10}$, which   contradicts  Interlacing Theorem.

Third let $\frac{1+\sqrt{5}}{2}\notin Spec(T_k)$ and $\frac{1+\sqrt{5}}{2}\in Spec(T_k-u)$. Then there is exactly one of $P_4$ and $P_9$ in $T_k-u$.   Note that $f_9(x)=x(x^2-x-1)(x^2+x-1)(x^4 -5x^2+5)$, if $P_9\in T_k-u$ then $\{x, (x^2-x-1)(x^2+x-1)\}\subseteq S_9$. By Lemma \ref{S-lem-1}(i), we have $S_9'=\{x^2-1,x^2-2\}$, $\{x^2-1\}$, $\{x^2-2\}$ or $\emptyset$.
 By Lemma \ref{S-lem-1}(ii), we have $1=\partial(q_9(x))-3=\partial(h_{S_9'}(x))-\partial(c_{S_9}(x))\le4-5=-1$,  a contradiction. If  $P_4\in T_k-u$ then $\{(x^2-x-1)(x^2+x-1)\}\subseteq S_k$, which leads to a contradiction as  in the first situation.

 Fourth let  $\frac{1+\sqrt{5}}{2}\notin Spec(T_k)$ and $\frac{1+\sqrt{5}}{2}\notin Spec(T_k-u)$. We have $P_4, P_9, P_{11}\notin T_k-u$ and then $s(x)=(x^2-x-1)(x^2+x-1)\notin S_k,S_k'$. Then $S_k \cup S_k'\subseteq \{x, x^2-1, x^2-2\}$ and so $\partial(h_{S_k'}(x))+\partial(c_{S_k}(x))\leq 5$. Combining with Lemma \ref{S-lem-1}(ii), we have $1\le\partial(c_{S_k}(x))\leq 4-\frac{\partial(q_k(x))}{2}$, which gives that $k\not=10,12,13$ due to $\frac{\partial(q_k(x))}{2}>3$ for $k=10,12,13$. It remains to consider the cases of $k=6,7,8$.
If $k=8$ then $1\le\partial(c_{S_8}(x))\leq 4-\frac{\partial(q_8(x))}{2}=1$. However, $\{x^2-1\}\subseteq S_8$, a contradiction.
If $k=7$ then $1\le\partial(c_{S_7}(x))\leq 4-\frac{\partial(q_7(x))}{2}=2$. However, $\{ x, x^2-2\}\subseteq S_7$ from (\ref{f-eq-1}) and so $\partial(c_{S_7}(x))\geq3$, a contradiction.
If $k=6$ then $1\le\partial(c_{S_6}(x))\leq 4-\frac{\partial(q_6(x))}{2}=1$ and   thus $S_6=\{x\}$ and $T_6-u=n_1P_1\cup P_6$ where $n_1\geq2$. On the other aspect, we get that $\partial(h_{S_6'}(x))=4$  from Lemma \ref{S-lem-1}(ii) and then $S_6'=\{x^2-1,x^2-2\}$ by Lemma \ref{S-lem-1}(i). Thus $T_6$ has an eigenvalue $\sqrt{2}$  by Lemma \ref{starlike-P6-20}(ii). However, from (\ref{O-eq-1}) we see that the interval  $[\sqrt{2}, \lambda_2(T_6)]$ has no any eigenvalue of $T_6-u=n_1P_1\bigcup P_6$, which   contradicts  Interlacing Theorem.

We complete this proof.
\end{proof}

\begin{thm}\label{thm-quadratic-starlike-II}
Let $T \supset K_{1,3} $ be a quadratic starlike tree. Then $f_{T}(x)$ is of  form (II) if and only if  $T$ is one of the five graphs  listed in Table \ref{table-quadratic-II}.
\end{thm}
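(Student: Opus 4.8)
The plan is to run the same machinery used for Theorem \ref{thm-quadratic-starlike-I}, now with the degree-four free factor $g(x)=(x^2-ax+b)(x^2+ax+b)$ in place of $x^2-c$. First I would invoke Lemma \ref{starlike-P6-22}: since $f_T(x)$ is of form (II), $T-u$ contains no path $P_k$ with $k>5$, so $T-u=n_1P_1\cup n_2P_2\cup n_3P_3\cup n_4P_4\cup n_5P_5$ with $n_1+\cdots+n_5=d_T(u)\ge 3$. Applying Lemma \ref{lem-z} then yields the character equation \eqref{Eq-t-u} together with the parameter equation \eqref{eq-6-10}, which now reads $z_1+2z_2+2z_3+4z_4+2z_5=8$ because $\partial(g(x))=4$, subject to the restriction \eqref{eq-z}, where $z_1\in\{0,2\}$ and $z_2,z_3,z_4,z_5\in\{0,1,2\}$.

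Next I would enumerate, exactly as in the form (I) case, all admissible tuples $(z_1,z_2,z_3,z_4,z_5)$ solving $z_1+2z_2+2z_3+4z_4+2z_5=8$ under these constraints and the implications among them coming from \eqref{eq-z}. Each tuple fixes which $n_i$ vanish, reduces the character equation \eqref{Eq-t-u} to a short polynomial identity, and the surviving unknowns are read off by comparing coefficients. As in Table \ref{table-quadratic-I}, most tuples are invalid, being those forcing some $n_i<0$, a non-integral value, or a violation of $\sum_i n_i\ge 3$, and these are discarded; the remaining tuples produce the candidate families eventually recorded in Table \ref{table-quadratic-II}.

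The essential new difficulty is that the free factor is no longer a single $x^2-c$ but an even quartic $g(x)=x^4+px^2+q$ (even because $T$ is bipartite, as already built into Theorem \ref{star-thm-1}), and for $f_T(x)$ to genuinely have form (II) this quartic must split as $(x^2-ax+b)(x^2+ax+b)=x^4+(2b-a^2)x^2+b^2$ with integers $a>0,\ b$ and with $\Delta=a^2-4b>0$ \emph{square-free}. Comparing coefficients forces $q=b^2$ to be a perfect square and $a^2=2b-p$; the genuinely hard step is verifying that $\Delta$ is square-free for every parameter value of the relevant infinite family, since otherwise $\lambda_1(T)$ would be rational and the tree would fall under form (I). This is exactly where the negative Pell equation enters: for that family the discriminant takes the shape $\Delta=y^2-4(x-2)$ with $(x,y)$ ranging over the positive solutions of $x^2-2y^2=-1$ supplied by Lemma \ref{lem-pell-1}, and Lemma \ref{lem-pell} guarantees $\Delta$ is square-free along the whole family. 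Thus quadraticity of form (II) for these trees is equivalent to the parameter being a Pell index $k$, which keeps the family infinite yet sharply constrained.

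Finally, for the converse I would verify directly that each of the five trees listed in Table \ref{table-quadratic-II} is quadratic of form (II): using Lemma \ref{starlike-poly} (equivalently, the character equation) I would exhibit its characteristic polynomial in the displayed product form of \eqref{f-eq-1}-type factors, check that every basis factor is irreducible of degree at most two, and confirm that the quartic $g(x)$ splits into the two conjugate quadratics $x^2\pm ax+b$ with $\Delta=a^2-4b$ positive and square-free. Combining the forward enumeration with this verification gives the claimed ``if and only if''. The main obstacle throughout is the number-theoretic square-free check for the Pell-parametrized family; the polynomial bookkeeping, though lengthy, is routine once the parameter equation has been reduced.
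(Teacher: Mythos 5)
Your proposal follows essentially the same route as the paper: exclude $P_k$ ($k>5$) via Lemma \ref{starlike-P6-22}, reduce to the parameter equation $z_1+2z_2+2z_3+4z_4+2z_5=8$ from Lemma \ref{lem-z}, enumerate the admissible tuples under the restriction \eqref{eq-z}, compare coefficients to discard the invalid ones and extract the five surviving families, and invoke Lemma \ref{lem-pell} for the Pell-parametrized square-free check (which the paper relegates to Remarks \ref{re-con} and \ref{re-1} to show the families are genuinely of form (II) and infinite). This matches the paper's two-step proof in both structure and substance.
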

\begin{proof}
According to Lemma \ref{starlike-P6-22}, we have  $T-u=n_1P_1\cup n_2P_2\cup n_3P_3\cup n_4P_4\cup n_5P_5$ where $n_1+n_2+\cdots+n_5\geq3$.
Moreover, we have the character equation from Lemma \ref{lem-z}
\begin{equation}\label{h-eq-2}
\begin{array}{ll}
&t_{(n_1,n_2,n_3,n_4,n_5)}(x)\\&\!\!\!=xm(x)-(n_1(x^2-1)(x^2-2)(x^2-x-1)(x^2+x-1)(x^2 - 3)\\
&\!\!\!+n_2x^2(x^2\!-\!2)(x^2\!-\!x\!-\!1)(x^2\!+\!x\!-\!1)(x^2\! - \! 3)+n_3(x^2\!-\!1)^2(x^2\!-\!x\!-\!1)(x^2\!+\!x\!-\!1)(x^2\! - \!3)\\
&\!\!\!+n_4x^2(x^2\!-\!1)(x^2\!-\!2)^2(x^2\! - \!3)+n_5(x^2\!-\!2)(x^2\!-\!x\!-\!1)^2(x^2\!+\!x\!-\!1)^2)\\
&\!\!\!=x^{z_1}(x^2\!-\!1)^{z_2}(x^2\!-\!2)^{z_3}((x^2\!-\!x\!-\!1)(x^2\!+\!x\!-\!1))^{z_4}(x^2\!-\!3)^{z_5}(x^2\!\!-\!ax\!+\!b)(x^2\!+\!ax\!+\!b)\\
&\!\!\!=u_{(z_1,z_2,z_3,z_4,z_5)}(x),
\end{array}
\end{equation}
and the  parameter equation
\begin{equation}\label{z-eq-1}
z_1+2z_2+2z_3+4z_4+2z_5+4=12.
\end{equation}
Using the restriction condition (\ref{eq-z}), it is routine to find all the  thirteen solutions $(z_1,z_2,z_3,z_4,z_5)$ of  (\ref{z-eq-1}) that are listed in Table \ref{table-quadratic-II-invalid} and \ref{table-quadratic-II}, respectively. Now we divide  our proof in two steps.

{\flushleft\bf Step 1. } All the solutions $(z_1,z_2,z_3,z_4,z_5)$ listed   in  Table \ref{table-quadratic-II-invalid} are invalid.

By taking $(z_1,z_2,z_3,z_4,z_5)=(0,2,1,0,1)$ in the first row of Table \ref{table-quadratic-II-invalid}, we get that $n_2=n_3=n_5=0$ and $n_1,n_4\geq1$ from the restriction (\ref{eq-z}). Thus the Eq. (\ref{h-eq-2}) becomes
 $$\begin{array}{ll}
&xm(x)-(n_1(x^2-1)(x^2-2)(x^2\!-\!x\!-\!1)(x^2\!+\!x\!-\!1)(x^2-3)
+n_4x^2(x^2\!-\!1)(x^2\!-\!2)^2(x^2\!-\!3))\\
&=t_{(n_1,0,0,n_4,0)}(x)\\
&=u_{(0,2,1,0,1)}(x)\\
&=(x^2-1)^{2}(x^2-2)(x^2-3)(x^2-ax+b)(x^2+ax+b).
\end{array}$$
By   deleting  $(x^2-1)(x^2-2)(x^2-3)$ on two sides, the character equation is simplified as
$$x^2(x^2\!-\!x\!-\!1)(x^2\!+\!x\!-\!1)-(n_1(x^2\!-\!x\!-\!1)(x^2\!+\!x\!-\!1)
+n_4x^2(x^2\!-\!2))
=(x^2-1)(x^2-ax+b)(x^2+ax+b).$$
By comparing the  coefficients, we get the restriction condition
$$\left\{\begin{array}{ll}
n_1+n_4+3=a^2-2b+1\\
3n_1+2n_4+1=a^2+b^2-2b\\
n_1=b^2\\
\end{array}\right.
\Longrightarrow
\left\{\begin{array}{ll}
n_1=b^2\geq1\\
n_4=-b^2+1\geq1\\
a^2=2b+3\\
\end{array}\right.$$
Note that $n_4=-b^2+1\geq1$, this is impossible since $\Delta=a^2-4b$ is  a square-free number.  Thus such a solution is invalid.

As the same process as above, from the solutions of $(z_1,z_2,z_3,z_4,z_5)$  in 2th column  of Table \ref{table-quadratic-II-invalid},  we can respectively determine $(n_1,n_2,n_3,n_4,n_5)$, according to (\ref{eq-z}), in 3th  column  of Table \ref{table-quadratic-II-invalid}, which returns respectively to  Eq. (\ref{h-eq-2}) we get the simplification of character equations, by comparing the  coefficients we get respectively the restriction conditions in 4th  column  of Table \ref{table-quadratic-II-invalid}. All these restriction conditions are conflicting, which are indicated in detail in 5th  column  of Table \ref{table-quadratic-II-invalid}.

\begin{table}[H]
\footnotesize
\caption{\small Invalid solutions  of form (II)}
\centering
\scriptsize
\renewcommand\arraystretch{1.1}
\begin{tabular*}{16cm}{m{4pt}|m{45pt}|m{50pt}|m{155pt}|m{175pt}}
\hline
&$(\!z_1,z_2,z_3,z_4,z_5\!)$ &$(\!n_1,n_2,n_3,n_4,n_5\!)$& restriction condition& simplification of restriction condition\\\hline
\multirow{2}*{1} &\multirow{2}*{$(0,2,1,0,1)$} & \multirow{2}*{$( n_1,0,0,n_4,0)$ }& $n_1+n_4+3=a^2-2b+1$&\multirow{2}*{$n_1\!=\!b^2\!\geq\!1$, $n_4\!=\!-b^2+1\geq1$ }\\
&&&$3n_1+2n_4+1=a^2+b^2-2b$, $n_1=b^2$&\\\hline
\multirow{3}*{2} &\multirow{3}*{$(0,2,0,0,2)$} & \multirow{3}*{$(n_1, 0,n_3,n_4,0)$ } & $n_1+n_3+n_4+5=a^2-2b+4$,&\multirow{3}*{$n_1\!=\!b^2\!\geq\!0$, $n_3\!=\!b^2\!\geq\!1$,  $n_4\!=\!-b^2\!+\!1\!\geq\!1$ }\\
&&&$5n_1 + 4n_3 + 4n_4 + 7=4a^2 + b^2 - 8b + 3$ &\\ &&&$7n_1+4n_3+4n_4+2=3a^2+4b^2- 6b$, $2n_1 + n_3= 3b^2$&\\\hline
\multirow{2}*{3} &\multirow{2}*{$(0,1,1,0,2)$} & \multirow{2}*{$( n_1, 0,0,n_4,0)$ }  & $n_1+n_4+3= a^2- 2b+ 3$& \multirow{2}*{$n_1=3b^2\geq1$, $n_4=-b^2+1\geq1$ } \\
&&&$3n_1 + 2n_4 + 1=3a^2 + b^2 - 6b$, $n_1=3b^2$&\\\hline
\multirow{3}*{4} &\multirow{3}*{$(0,0,2,0,2)$} & \multirow{3}*{$( n_1,n_2,0,n_4,0)$ }&$n_1+ n_2+n_4+4=a^2-2b+5$&\multirow{2}*{$ n_1=6b^2\geq1$, $n_2=-3b^2+1\geq1$,}\\
&&&$4n_1 + 3n_2 + 3n_4 + 4=5a^2 + b^2 - 10b + 6$&\\&&& $4n_1+n_2+2n_4+1= 6a^2+5b^2-12b$, $n_1=6b^2$
&$n_4=-a^2+b^2+(b+1)^2\geq1$  \\\hline
\multirow{2}*{5} &\multirow{2}*{$(2,0,1,0,2)$ }& \multirow{2}*{$(0,n_2,0,n_4,0)$ } &$n_2+n_4+4= a^2-2b +3$,& $n_2\!=\!2a^2\!\geq\!1$, $n_4\!=\!-a^2\!+\!1\!\geq\!1$ \mbox{ or } \\ &&& $3n_2 + 3n_4 + 4=3a^2 + b^2 - 6b$, $n_2+2n_4+1=3b^2$&$n_2\!=\!-a^2\!+\!5\!\geq\!1$, $n_4\!=\!2a^2\!-\!8\!\geq\!1$  \\\hline
\multirow{2}*{6 } &\multirow{2}*{$(0,0,2,1,0)$} & \multirow{2}*{$(n_1,n_2,0,0,n_5)$} &$n_1+n_2+n_5+4=a^2-2b +2$
 &$n_1\!=\!-a^2\!+\!(b+1)^2\!+\!2\!\geq\!0$, $n_2\!=\!-a^2\!+\!2b\!+\!4\!\geq\!0$, \\
 &&&$4n_1 + 3n_2 + 3n_5 + 3=2a^2 + b^2 - 4b$, $3n_1+n_5=2b^2$& $n_5\!=\!3a^2\!-\!(b+3)^2\!\geq\!1$, $\Rightarrow$ $3n_1+n_5=2b^2=1$\\
 \hline
\multirow{2}*{7} &\multirow{2}*{$(0,0,0,1,2)$} & \multirow{2}*{$(n_1,n_2,n_3,0,0)$}& $n_1+n_2+n_3+3=a^2 - 2b+ 3$ &$n_1\!=\!a^2\!+\!(b-1)^2\!-\!3\!\geq\!0$, $n_2\!=\!2a^2\!-\!2(b+1)^2\!\geq\!1$,\\&&&$3n_1 + 2n_2 + 2n_3 + 2=3a^2 + b^2 - 6b$, $ 2n_1+n_3= 3b^2$ & $n_3\!=\!-2a^2\!+\!(b+2)^2\!\geq\!1$, $\Rightarrow$ $n_2\!+\!n_3\!=\!-b^2\!+\!2\!\geq\!2$  \\\hline
\multirow{2}*{8} &\multirow{2}*{$(2,0,2,0,1)$} & \multirow{2}*{$(0,n_2,0,n_4,0)$}  & $n_2+n_4+4= a^2- 2b + 2$& $n_2=1$, $n_4=b^2-1\geq1$, $a^2=(b+1)^2+1$ \\
&&&$3n_2 + 3n_4 + 4=2a^2 + b^2 - 4b$, $n_2+2n_4+1=2b^2$&$\Rightarrow b=-1, a=1$, $n_4=0$\\\hline
\end{tabular*}\label{table-quadratic-II-invalid}
\end{table}

{\flushleft\bf Step 2. } All the solutions $(z_1,z_2,z_3,z_4,z_5)$  listed  in  Table \ref{table-quadratic-II} are valid.

By taking $(z_1,z_2,z_3,z_4,z_5)=(0,0,0,2,0)$  in 1th row of Table \ref{table-quadratic-II}, we get $n_4=0$ from (\ref{eq-z}). Thus  Eq. (\ref{h-eq-2}) becomes
 $$\begin{array}{ll}
&\!\!\!\!\!\!xm(x)\!-\!(n_1(x^2\!-\!1)(x^2\!-\!2)(x^2\!-\!x\!-\!1)(x^2\!+\!x\!-\!1)(x^2\!-\!3)
\!+\!n_2x^2(x^2\!-\!2)(x^2\!-\!x\!-\!1)(x^2\!+\!x\!-\!1)(x^2\!-\!3)\\
&+n_3(x^2-1)^2(x^2-x-1)(x^2+x-1)(x^2-3)
+n_5(x^2-2)(x^2-x-1)^2(x^2+x-1)^2)\\&\!\!\!=t_{(n_1,n_2,n_3,0,n_5)}(x)\\&\!\!\!=u_{(0,0,0,2,0)}(x)\\
&\!\!\!=((x^2-x-1)(x^2+x-1))^2(x^2-ax+b)(x^2+ax+b).\\
\end{array}$$
By  deleting  $(x^2-x-1)(x^2+x-1)$ on two sides, we have
 $$\begin{array}{ll}&x^2(x^2-1)(x^2-2)(x^2-3)-(n_1(x^2-1)(x^2-2)(x^2-3)
+n_2x^2(x^2-2)(x^2-3)\\
&+n_3(x^2-1)^2(x^2-3)+n_5(x^2-2)\!(x^2-x-1)(x^2+x-1))\\&=(x^2-x-1)(x^2+x-1)(x^2-ax+b)(x^2+ax+b).\end{array} $$
By comparing the  coefficients of  the character equation, we get the restriction condition
\begin{equation}\label{eq-1235}
\!\!\!\!\left\{\!\!\begin{array}{ll}
n_1+n_2+n_3+n_5+6=a^2-2b+3\\
6n_1+5n_2+5n_3+5n_5+11=3a^2+b^2-6b+1\\
11n_1+6n_2+7n_3+7n_5+6=a^2+3b^2-2b\\
6n_1+3n_3+2n_5=b^2\\
\end{array}\right.
\!\!\!\!\Longrightarrow\!\!\!
\left\{\!\!\begin{array}{ll}
n_1=n_2=-2a^2+b^2+4b+5\\
n_3=2a^2-b^2-4b-4\geq1\\
n_5=3a^2-b^2-6b-9\geq1\\
\end{array}\right.\end{equation}
which leads to $n_1+n_3=1$, thus  $n_1=n_2=0$ and $n_3=1$. Furthermore,  from Eq. (\ref{eq-1235}), we deduce that $2a^2=(b+2)^2+1$ and $n_5=\frac{b^2-3}{2}\ge 2$ due to $d_T(u)=1+n_5\geq3$.
 Thus $T=T_{0,0,1,0,n_5}$ is a quadratic starlike tree with the restriction of   $n_5=\frac{b^2-3}{2}\geq2$ and  $2a^2=(b+2)^2+1$,  its characteristic polynomial is
 $$\begin{array}{ll}
f_T(x)&=u_{(0,0,0,2,0)}(x)\frac{f_{T-u}(x)}{m(x)}\\
&=((x^2-x-1)(x^2+x-1))^2(x^2-ax+b)(x^2+ax+b)\frac{(x^2-3)^{n_5}(x^2-2)(x^2-1)^{n_5}x^{n_5+1}}{m(x)}\\
&=x^{n_5}(x^2-1)^{n_5-1}(x^2-x-1)(x^2+x-1)(x^2-3)^{n_5-1}(x^2-ax+b)(x^2+ax+b).\\
\end{array}$$
As the same process  as above,  we can determine the other four quadratic starlike trees: $T_{0,0,0,n_4}$, $T_{2,0,0,n_4}$, $T_{n_1,0,n_3} $ and $T_{n_1,n_2}$, which are listed along with their restriction conditions in 4-column and the characteristic polynomials in $5$-column  of the  Table \ref{table-quadratic-II}, respectively.
 It is clear that the above five families of starlike trees are all quadratic with form (II). In addition, as we add to the  Remark \ref{re-1}, each of these five starlike trees produces an infinite class.

 We complete this proof.
\end{proof}

\begin{table}
\footnotesize
\caption{\small Quadratic starlike trees  of form (II)}
\centering
\scriptsize
\renewcommand\arraystretch{1.1}
\begin{tabular*}{15.5cm}{m{4pt}|m{45pt}|m{50pt}|m{165pt}|m{140pt}}
\hline
&$(\!z_1,z_2,z_3,z_4,z_5\!)$ &$(\!n_1,n_2,n_3,n_4,n_5\!)$& $T$/restriction conditions&$f_T(x)$\\\hline
\multirow{2}*{1 }& \multirow{2}*{$(0,0,0,2,0)$ }& \multirow{2}*{$(0 ,0,1,0,n_5)$}& $T_{0,0,1,0,n_5}$&$x^{n_5}(x^2-1)^{n_5-1}(x^2-x-1)(x^2+x-1)$\\
&&&$n_5=\frac{b^2-3}{2}\geq2, 2a^2=(b+2)^2+1$ &$\cdot(x^2-3)^{n_5-1}(x^2-ax+b)(x^2+ax+b)$
\\\hline
\multirow{2}*{2} &\multirow{2}*{$(2,1,1,0,1)$} & \multirow{2}*{$(0,0,0, n_4,0)$}  & $T_{0,0,0,n_4}$&$x((x^2-x-1)(x^2+x-1))^{n_4-1}$\\
&&&$n_4=\frac{b^2-1}{2}\geq3,2a^2=(b+2)^2+1 $& $\cdot(x^2-ax+b)(x^2+ax+b)$\\\hline
\multirow{4}*{ 3}&\multirow{4}*{$(0,1,2,0,1)$ }& \multirow{4}*{$(2,0,0, n_4,0)$ } &$T_{2,0,0,n_4}$&$x(x^2-2)((x^2-x-1)(x^2+x-1))^{n_4-1}$\\
&&&$n_4=a^2-5\geq1, b=1 \mbox{  or }$&$\cdot(x^2-ax+1)(x^2+ax+1) \ \mbox{or}$\\
&&&\multirow{2}*{$n_4=a^2-1\geq1,b=-1 $}&$x(x^2-2)((x^2-x-1)(x^2+x-1))^{n_4-1}$\\
&&&&$\cdot(x^2-ax-1)(x^2+ax-1)$ \\\hline
\multirow{2}*{4} &\multirow{2}*{$(0,1,0,1,1)$} & \multirow{2}*{$(n_1,0,n_3,0,0)$}&$T_{n_1,0,n_3}$&$x^{n_1+n_3-1}(x^2-2)^{n_3-1}$\\ &&&$n_1\!=\!-a^2\!+\!(b\!+\!1)^2\!+\!1 ,n_3\!=\!2a^2\!-\!(b+2)^2\!\geq\!1,n_1+n_3\geq3$&
$\cdot(x^2-ax+b)(x^2+ax+b)$\\\hline
\multirow{2}*{5} &\multirow{2}*{$(0,0,1,1,1)$} &\multirow{2}*{$(n_1, n_2,0,0,0)$} & $T_{n_1,n_2}$&$x^{n_1-1}(x^2-1)^{n_2-1}$\\
&&&$n_1=b^2\geq1,n_2=a^2-(b+1)^2\geq1,n_1+n_2\geq3$&$\cdot(x^2-ax+b)(x^2+ax+b)$\\\hline
\end{tabular*}\label{table-quadratic-II}
\end{table}

\begin{remark}\label{re-con}
According to Theorem \ref{star-thm-1}, we know that if $f_T(x)$ is of from (II), then $(x^2-ax+b)$ and $(x^2+ax+b)$ must be    irreducible. However our restriction conditions for quadratic starlike trees in Table  \ref{table-quadratic-II} are not sufficient to guarantee the reducibility of $(x^2-ax+b)$ and $(x^2+ax+b)$. Thus the condition that $\Delta=a^2-4b$ is  a square-free number is also necessary for the restriction of quadratic starlike trees in Table  \ref{table-quadratic-II}.  As a supplement we give an example for $T_{n_1,0,n_3}$.

$T_{n_1,0,n_3}$ has the restriction conditions $n_1=-a^2+(b+1)^2+1$, $n_3=2a^2-(b+2)^2\geq1$ and $n_1+n_3\geq3$. Now, by taking $a=\pm(b+1)$, we see that $\Delta=a^2-4b=(b-1)^2$ is  a square number. On the other hand,  if $a=\pm(b+1)$, then $n_1=-a^2+(b+1)^2+1=1$ and $n_3=2a^2-(b+2)^2=b^2-2$. Clearly, $n_1+n_3\geq3$ whenever $b>2$.
\end{remark}
\begin{remark}\label{re-1}
It is clear that  $T_{2,0,0,n_4}$, $T_{n_1,0,n_3}$ and $T_{n_1,n_2}$ are infinite families since we can chose infinite  $a$ and $b$ to satisfy the restriction conditions such that $\Delta=a^2-4b$ is not a square number.

$T_{0,0,1,0,n_5}$ has restriction conditions $n_5=\frac{b^2-3}{2}\geq2$ and $2a^2=(b+2)^2+1$. By Lemma \ref{lem-pell}, $(x,y)=(b+2,a)$  is the solution of the negative Pell equation $x^2-2y^2=-1$ such that there are infinite  $P(b+2,a)=a^2-4((b+2)-2)=a^2-4b=\Delta$ are  square-free numbers. Thus $T_{0,0,1,0,n_5}$ is an infinite family, and so is $T_{0,0,0, n_4}$ since its restriction conditions are  same with $T_{0,0,1,0,n_5}$.
\end{remark}

At last of this paper, we use computer to search all the quadratic starlike trees $T_{0,0,1,0,n_5}$ with $n_5\leq1000$. It has been found that there are only five quadratic starlike trees of $T_{0,0,1,0,n_5}$-type within $5000$ vertices that are listed in Table \ref{table-T_{0,0,1,0,n_5}}. Also we see that the number of vertices of $T_{0,0,1,0,n_5}$ may be arbitrarily large but its number of eigenvalues remains to $13$.
\begin{table}
\scriptsize
\caption{\small $T_{0,0,1,0,n_5}(n_5\leq1000)$}
\centering
\renewcommand\arraystretch{1.3}
\begin{tabular*}{14cm}{m{4pt}|m{20pt}|m{20pt}|m{20pt}|m{20pt}|m{230pt}}
\hline
&$n_5$&$a$&$b$&$\Delta$&$f_{T_{0,0,1,0,n_5}}(x)$\\\hline
1&$3$&$1$&$-3$&$13$&$x^{3}(x^2-1)^{2}(x^2-x-1)(x^2+x-1)(x^2-3)^{2}(x^2-x-3)(x^2+x-3)$\\
2&$11$&$5$&$5$&$5$&$x^{11}(x^2-1)^{10}(x^2-x-1)(x^2+x-1)(x^2-3)^{10}(x^2-5x+5)(x^2+5x+5)$\\
 3&$39$&$5$&$-9$&$61$&$x^{39}(x^2-1)^{38}(x^2-x-1)(x^2+x-1)(x^2-3)^{38}(x^2-5x-9)(x^2+5x-9)$\\
4&$759$&$29$&$39$&$ 685$&$x^{759}(x^2-1)^{758}(x^2-x-1)(x^2+x-1)(x^2-3)^{758}(x^2-29x+39)(x^2+29x+39)$\\
5&$923$&$29$&$-43$&$1013$&$x^{923}(x^2-1)^{922}(x^2-x-1)(x^2+x-1)(x^2-3)^{922}(x^2-29x-43)(x^2+29x-43)$\\\hline
\end{tabular*}\label{table-T_{0,0,1,0,n_5}}
\end{table}

\end{document}